\documentclass[11pt,oneside]{amsart}

\usepackage{piccoh-preamble}

\title{Invertibility and parity in symmetric monoidal categories}
\dedicatory{Dedicated to Bob Par\'e on the occasion of his 80th birthday.}
\authorGurski
\authorJohnson

\DTMsavedate{finaldate}{2026-03-31} 
\date{\DTMusedate{finaldate}} 

\hypersetup{bookmarksdepth=2}

\newcommand{\printkwds}{invertible object, coherence, super integers, flexible algebra, 2-monad, Picard category}
\keywords{\printkwds} 

\hypersetup{ 
  pdfkeywords={\printkwds},
  pdfauthor={\authors},
}

\subjclass[2020]{18M05 (Primary); 18C40, 18N15 (Secondary)}

\newcommand{\anb}{\{a,b\}}
\newcommand{\aisob}{\{a \iso b\}}
\newcommand{\atob}{\{a\, \begin{tikzpicture}[baseline={(0,-3.5pt)}]
    \draw[1cell,->] (0,0) to (.45*\arrowlen,0);
  \end{tikzpicture}\, b\}}
\newcommand{\atotob}{\{a\, \begin{tikzpicture}[baseline={(0,-3.5pt)}]
    \def\arrshift{2.0pt}
    \begin{scope}[shift={(0,\arrshift)}]
      \draw[1cell,->] (0,0) to (.45*\arrowlen,0);
    \end{scope}
    \begin{scope}[shift={(0,-\arrshift)}]
      \draw[1cell,->] (0,0) to (.45*\arrowlen,0);
    \end{scope}
  \end{tikzpicture}\, b\}}

\newcommand{\term}[1]{\emph{#1}} 

\newcommand{\ang}[1]{\langle #1 \rangle}

\newcommand{\Z}{\zZ}
\renewcommand{\P}{\zP} 
\renewcommand{\S}{\zS} 

\DeclareMathOperator{\p}{\mathsf{p}}
\DeclareMathOperator{\e}{\mathsf{e}}
\DeclareMathOperator{\einv}{\wt{\e}}

\newcommand{\fet}{\mathtt{8}}
\newcommand{\figC}{\mathtt{C}}
\newcommand{\figH}{\mathtt{H}}

\definecolor{cola}{HTML}{EC4300} 

\newcommand{\pinv}[1]{{#1}^{\mathrm{inv}}}
\newcommand{\textc}[1]{\color{cola}{\textsc{\small[#1]}}}
\newcommand{\ochi}{\wt{\chi}}

\newcommand{\qmarg}{(?)} 

\def\oleta{\ol{\eta}}
\def\olepz{\ol{\epz}}

\begin{document}

\begin{abstract}
  We introduce a notion of parity for formal morphisms between invertible objects and use it to prove a corresponding coherence theorem.
  Parity is conceptually similar to the sign of underlying permutations, but not defined as such.
  To give complete details, this work includes a thorough treatment of the free permutative category on an invertible generator, its skeletal model, known as the super integers, and an equivalence between them classified by the pair of integers $\pm$1.
  Our approach is organized and clarified as an application of 2-monadic algebra, particularly the concept of flexibility and the Lack model structure.
  The final section contains a number of examples applying the main results.
\end{abstract}

\maketitle
\tableofcontents 

\section{Introduction}\label{sec:intro}

This paper introduces \term{parity} to prove a coherence theorem for morphisms between invertible objects in symmetric monoidal categories.
As a starting point, Mac~Lane's Symmetric Coherence Theorem \cite[IV.4]{ML98Categories} shows that general morphisms in a free symmetric monoidal category are determined by their \term{underlying permutation}.
Invertibility of objects introduces additional relations---namely abelianization of automorphism groups---and thus one expects morphisms between invertible objects to be characterized by the \term{sign} of their underlying permutation.

This is the core idea for our definition of parity, but subtleties arise because invertibility means that the concept of underlying permutation---even up to sign---is difficult to define precisely.
Consider, for example, that the unit of a symmetric monoidal category with invertible objects generally has nontrivial automorphisms arising as the Euler characteristics of those invertible objects.
And yet, the unit is an empty sum; there are no odd permutations of a 0-element set.
Or simply observe that cancellation of invertible pairs means the number of summands in the source and target of a morphism need not be equal.

On this last point, it will be the case that the number of invertible summands is well-defined modulo two.
Moreover, signed counting of summands can associate a well-defined integer to every object in the free permutative category on a single invertible generator, such that every morphism has the same integer associated to its source and target.
Making these details clear and usable in calculations is one of the primary motivations for this work.

The main coherence result is stated as follows.
Here, and throughout the rest of this document, we restrict to \emph{permutative} categories: symmetric monoidal categories whose underlying monoidal structure is strict.
Mac~Lane's Symmetric Coherence Theorem \cite{ML98Categories} implies that each general symmetric monoidal category is equivalent, as such, to a permutative category.
Thus, for the purposes of coherence, there is no loss of generality and a nontrivial reduction in complexity.
\begin{thm}[Coherence via parity]\label{thm:free-inv-coherence}
  Let $G$ be a finite set and suppose that $s,t\cn z \to w$ is a pair of parallel morphisms in $\PP G$ \cref{equation:PPG-defn}, the free permutative category on a set of invertible generators $G$.
  Then $s$ and $t$ are equal as morphisms in $\PP G$ if and only if they have the same $a$-parity for each $a \in G$ \cref{eq:apar}.
\end{thm}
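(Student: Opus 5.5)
The plan is to reduce to the case of a single generator and then use the skeletal model. First, since $\PP G$ is the free permutative category on invertible generators indexed by $G$, I expect it to decompose (up to equivalence) as the product $\prod_{a\in G}\PP\{a\}$. Informally: the hom-sets of the free permutative category on invertible generators split according to the generator label, because symmetries between summands with distinct labels are determined by naturality and the symmetric monoidal axioms. Concretely, for each $a\in G$ there is a strict symmetric monoidal projection $\pi_a\cn \PP G\to\PP\{a\}$ sending every other generator to the unit. I would check that the induced functor $\PP G\to\prod_a \PP\{a\}$ is an equivalence. Essential surjectivity is clear since $G$ is finite. For full faithfulness, I would invoke the universal property, or use flexibility and the Lack model structure so that a pseudo-inverse built from the tensor product $\prod_a\PP\{a\}\to\PP G$ can be constructed. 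Since $a$-parity is defined from the $a$-summands, it should factor through $\pi_a$. This reduces the theorem to $G=\{a\}$: two parallel morphisms in $\PP\{a\}$ are equal iff they have the same parity.

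For one generator, I would use the equivalence between $\PP\{a\}$ and the super integers. In that skeletal model, objects are integers $n$ and every automorphism group is $\mathbb{Z}/2$. The integer attached to an object is the signed count of summands, which is preserved by morphisms. Since the equivalence is faithful, parallel morphisms $s,t\cn z\to w$ are equal iff their images are equal. Each image is one of the two elements of $\mathbb{Z}/2$, after transporting along the canonical isomorphisms to the skeletal object. Next I would check that $a$-parity, as defined in \cref{eq:apar}, agrees with the image in $\mathbb{Z}/2$. It suffices to verify this on generating morphisms (symmetries, unit and counit of the duality, and their inverses), together with compatibility with composition and tensor. Parity should be additive under both, and so is the $\mathbb{Z}/2$-valued image, since the super integers have the symmetric structure in which the symmetry $1\oplus1$ is the nontrivial element.

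Once the two invariants match, the theorem follows: equal parity means equal image in the skeleton, and faithfulness gives $s=t$. The converse holds because parity is well defined on morphisms. The main obstacle is the middle step: showing that parity is genuinely well defined and equals the super-integer invariant. In particular, the unit/counit "Euler characteristic" automorphisms of the unit have to be shown odd. I would handle this by defining the comparison functor on generators using the $\pm1$ choice of equivalence, which both choices should make independent, and by checking the triangle identities explicitly.
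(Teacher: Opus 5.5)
Your proposal is correct and follows essentially the paper's route: use the coproduct–product equivalence $I$ of \cref{thm:smbperm1427} to reduce equality to the projections $I_a$, then use that $K$ from \cref{thm:ZZ-equiv-P1} is faithful and that each hom-set of $\Z$ is $\{1_k,-1_k\}$. The paper cites \cref{thm:smbperm1427}, so you need not reprove it via flexibility, and the step you flag as the main obstacle is vacuous here: the paper defines $a$-parity as $\p_a(f) = LK(I_a f)$ in \cref{eq:apar}, so agreement with the image in $\Z$ (including oddness of $\fet_a$) holds by definition, with no check on generators needed.
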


The proof of \cref{thm:free-inv-coherence} is given in \cref{sec:z-comp-2}, following \cref{rmk:betaab-triv}.
We now outline three main features of our treatment.

\subsection*{2-monadic framework}
Firstly, we use a 2-monadic approach to explain two different constructions of the free permutative category on a single generator.
The first of these, $\P$, is defined abstractly via a sequence of pushouts, and therefore has a corresponding universal property by construction.
However, the abstract construction of $\P$ makes it unclear how to characterize morphisms for a coherence statement.

The second construction, $\Z$, is defined directly by a list of objects (integers), morphisms (integers mod 2), and axioms.
The concrete definition makes it trivial to characterize morphisms, and this is how our parity invariant is defined.
However, the skeletal definition of $\Z$ makes it is difficult to show that $\Z$ satisfies the expected universal property for being freely generated by an invertible object.

We construct an equivalence
$K \cn \P \to \Z$ in \cref{thm:ZZ-equiv-P1}, using the 2-monadic concept of \term{flexibility} for $\P$ and a number of detailed calculations for abstract invertible objects.
This equivalence $K$ gives a direct and easily-computable definition of parity for morphisms in $\P$, the free permutative category on a single invertible object.

\subsection*{Reduction via product projection}
Secondly, to move beyond the single-generator case, we recall the equivalence between 2-categorical products and coproducts of permutative categories as \cref{thm:smbperm1427}.
Such a result shows that permutative categories have bicategorical direct sums, defined by either products or coproducts.
Since a free construction on multiple generators will necessarily have the universal property of a coproduct, we use the equivalence with products to give a componentwise reduction of coherence.

Such a strategy for permutative categories (i.e., without assuming invertibility) was already outlined in \cite[Section~12]{GJalmorcoh}.
Consider a pair of parallel morphisms, $s$ and $t$, in $\SS G$, the free permutative category generated by a set $G$ (\cref{thm:permcat-monadic,rmk:free-permcats}).
Instead of comparing the underlying permutations of $s$ and $t$ as in Mac~Lane's Symmetric Coherence \cref{thm:permcat-coherence-one-obj}, we explain in \cref{thm:diagrcoh-s,example:permcoh-comptise} that one only needs to consider the \emph{self}-permutations of $s$ and $t$ with respect to each of the elements of $G$.
The self-permutation of a morphism with respect to a generator $a \in G$ is called its $a$-permutation, and
$s=t$ (``the diagram commutes'') if and only if $s$ and $t$ have the same $a$-permutations for each $a \in G$.
This reduction is an interesting, but not necessarily essential, simplification of \cref{thm:permcat-coherence-one-obj} that comes as a direct consequence of the direct sums: the $a$-permutation of a morphism is computed by projection onto the factor corresponding to $a$, and such an operation detects equality because coproducts are equivalent to products.

In the case of invertible objects, the equivalence of coproducts and products is essential to our multi-generator results.
We show that it suffices to consider \emph{self}-parity for each object in the source and target of a morphism between products of invertible objects.
Here, the notion of self-parity necessarily includes an object \emph{and} its inverse---a subtlety that is made precise by the equivalence of coproducts with products.
This reduction to singly-generated factors is how our methods avoid the difficulty of defining underlying permutations for morphisms between invertible objects.

\subsection*{Examples}
The third feature of our treatment is its examples.
Our construction of the equivalence $K\cn \P \to \Z$ depends on a number of calculations whose details demonstrate a range of useful techniques.
While several of these are standard in the literature and well known to experts, others are less so.

Following the proof of \cref{thm:free-inv-coherence}, we give several example applications.
These include a more involved example of an additivity property that is known in the broader trace literature, but whose proof in the special case of invertible objects is somewhat simpler and demonstrates how our coherence theorem would be used for additional work with invertible objects.
Finally, we discuss conjugation of an arbitrary object in a permutative category by an invertible one, using parity at multiple points to simplify the computations.

\subsection*{Outline}

\cref{sec:perm-cats,sec:tmp-coh} review the basic definitions and results for permutative categories.
This review includes: the equivalence between coproducts and products in \cref{thm:smbperm1427}; Mac~Lane's Symmetric Coherence \cref{thm:permcat-coherence-one-obj}; and the componentwise reduction in \cref{thm:diagrcoh-s}.

\cref{sec:inv-objs,sec:calcs} review the basic definitions and properties of invertible objects.
\cref{sec:flex-mod} reviews the essential features of flexibility and defines the representing permutative category, $\P$.
\cref{sec:superZ} then introduces the super integers, $\Z$, and proves the equivalence $K \cn \P \to \Z$ in \cref{thm:ZZ-equiv-P1}.

The final \cref{sec:z-comp-2} contains the definition of parity and the application of \cref{thm:smbperm1427,thm:ZZ-equiv-P1} to our main coherence result, \cref{thm:free-inv-coherence}.
The proof is followed by numerous examples illustrating its use.

\subsection*{Relation to literature}

\subsubsection*{Coherence}
Coherence for various monoidal structures is an area of significant interest.
In addition to Mac~Lane's work \cite{MLan63Natural,ML98Categories} on the plain and symmetric monoidal cases, there is related work of Kelly-Laplaza \cite{KL1980Coherence} for general compact closed categories.
Our review of symmetric coherence originates with work of May in \cite{may72geo,May1974Einfty}, with revisions to identify the connections with 2-dimensional algebra.

Work of Baez-Lauda \cite{BL2004Higher} uses prior work of Ulbrich \cite{Ulb1984Kohaerenz} and Laplaza \cite{Lap1983Coherence} to give coherence and strictification for (non-symmetric) categorical groups.
The resulting coherence parallels that of monoidal categories: every formal diagram commutes.

Work of Dugger \cite{Dug2014Coherence} specializes the compact closed coherence of \cite{KL1980Coherence} to give a coherence theorem that is essentially equivalent to our \cref{thm:free-inv-coherence}.
Dugger's notion of \term{self-twists} is comparable to our $a$-parity for invertible pairs $(a,a')$.
Our methods have three essential differences from those of Dugger:
\begin{enumerate}
\item Our work does not depend on the more general results of Kelly-Laplaza \cite{KL1980Coherence}.
\item Our componentwise projection in \cref{thm:diagrcoh-s} simplifies the presentation somewhat.
\item Our construction of $\P$, the free permutative category on an invertible generator, and related analysis of the equivalence $K \cn \P \to \Z$, clarifies the relation between freeness, strictness, and coherence in this context.
\end{enumerate}

\subsubsection*{2-dimensional algebra and flexibility}

Our approach to 2-dimensional algebra follows the 2-monadic framework of Blackwell-Kelly-Power \cite{BKP1989Two}, later expanded by Lack \cite{Lac02Codescent,Lack2007Homotopy}.
Note, however, that the essential ideas of flexibility have origins in Kelly's work on doctrinal adjunction \cite{Kelly1974Doctrinal,Kel1974Coherence}.

We make crucial use of the Quillen model structure on algebras over a 2-monad developed by Lack in \cite{Lack2007Homotopy}.
This model structure is transferred from the canonical model structure on $\cat$ (also known as the trivial or folk model structure) from Joyal-Tierney \cite{JT1991Strong}.
In Lack's model structure, and therefore in the context of permutative categories, the flexible algebras are precisely the cofibrant ones (\cref{prop:cofibrant-flexible}).
This is how we show that $\P$ is flexible (\cref{prop:pflex})---an essential part of the proof of \cref{thm:ZZ-equiv-P1}.

\subsubsection*{The super integers}

The permutative category $\Z$ has been studied under various names, with motivations connected to a range of mathematical disciplines.
Our construction of $\Z$ follows that of \cite[Section~3]{JO2012Modeling} and \cite[Example~3.1.7 (e)]{GK2014Symmetric}.
The name \emph{super integers} for $\Z$ first appeared in \cite[Example~2.3.10]{Hor2020Cohomology}.

An equivalence $\P \hty \Z$ is part of the folklore in this area, being understood implicitly or explicitly in these and other references.
Explanations with varying levels of detail appear, for example, in
\cite[Proposition~3.1]{JO2012Modeling}, 
\cite[Section~1.22]{Dug2014Coherence}, and
\cite[Proposition~3.1.2]{Kap2021Supergeometry}. 
Although $\Z$ is often constructed in detail, the authors are not aware of such a careful treatment for $\P$ or a 2-monadic approach to the equivalence $K$ in \cref{thm:ZZ-equiv-P1}.

\section{Permutative categories}
\label{sec:perm-cats}

This section provides the fundamental background on permutative categories.
Here we establish our notation and terminology for operations within a given permutative category, and for the 2-category of which they are the objects.
We also remind readers, in \cref{thm:smbperm1427}, that the 2-category of permutative categories admits bicategorical direct sums: finite coproducts and finite products coincide up to equivalence.

\begin{defn}\label{defn:permutative}
  A \term{permutative category} $A$ consists of a strict monoidal category $(A, +, 0)$ together with a natural isomorphism called the \term{symmetry},
  \[
    \begin{tikzpicture}[x=18mm,y=10mm]
      \draw[0cell] 
      (0,0) node (xy) {A \times A}
      (1,-1.25) node (yx) {A \times A}
      (2,0) node (xyagain) {A}
      ;
      \draw[1cell] 
      (xy) edge node {+} (xyagain)
      (xy) edge[swap] node {\tau} (yx)
      (yx) edge[swap] node {+} (xyagain)
      ;
      \draw[2cell]
      (1,-.5) node[rotate=0] {\be \Downarrow};
    \end{tikzpicture}
  \]
  where $\tau \cn A \times A \to A \times A$ is the symmetry
  isomorphism in $\Cat$, such that the following axioms hold for all
  objects $x,y,z$ of $A$:
  \begin{equation}\label{eq:symm-axioms}
    \begin{aligned}
      \beta_{y,x} \beta_{x,y} & = \id_{x + y} \andspace\\
      \beta_{x, y + z} & = (\id_y + \beta_{x,z}) \circ (\beta_{x,y} + \id_z).
    \end{aligned}
  \end{equation}
  Note that these imply $\beta_{0,x} = \id_{x} = \beta_{x,0}$ for each object $x$ in $A$.
\end{defn}

\begin{rmk}[Notation for monoidal structures]\label{rmk:concat}
  We will usually use $+$ for the monoidal operation and $0$ for the unit in a permutative category, and we will always refer to this operation as the \term{monoidal sum}.
  At times, particularly in larger diagrams, we will omit these symbols and use concatenation as the operation.
  For a morphism $f \cn x \to y$, we will often write $a + f$ for the morphism $\id_a + f \cn a + x \to a + y$; when the $+$ symbol is being suppressed, this morphism might instead be written $af$ or $1f$.
\end{rmk}

\begin{notn}[Multiplication by natural numbers]\label{notn:n-dot}
  Let $(A, +, 0)$ be a permutative category with object $x \in A$.
  For each natural number $k \ge 0$, we will write
  \begin{equation}\label{eq:n-dot}
    k \cdot x = \overbrace{x + x + \cdots + x}^{\text{$k$ copies}}
  \end{equation}
  for the $k$-fold iterated sum of $x$.
  Note that $0 \cdot x = 0$ because the empty sum is the monoidal unit.
  Similarly, for morphisms $f$ in $A$, we let $k \cdot f$ denote the $k$-fold iterated sum of $f$.
\end{notn}

\begin{defn}\label{defn:smfunctor}
 Let $(A,+,0^A)$ and $(B,+,0^B)$ be permutative categories.
 A \term{symmetric monoidal functor} between them consists of a functor $F\colon A \to B$, a natural isomorphism $F_2$ called the \term{monoidal constraint} with components
 \[
 F_{2; x,y}\colon Fx + Fy \to F(x + y), \forspace x,y \in A,
 \]
 and an isomorphism
 \[F_0\colon 0^B \to F0^A,\]
 called the \term{unit constraint}.
 These data satisfy the following associativity, unity, and symmetry axioms for all objects $x,y,z$ in $A$:
 \begin{equation}\label{eq:smfunctor}
   \begin{aligned}
     F_{2;x,y+z} \, \circ \,  \bigl( \id_{Fx} +F_{2;y,z}\bigr)
     & = F_{2;x+y,z} \, \circ \,  \bigl( F_{2;x,y} + \id_{Fz}\bigr),\\
     F_{2;0,x} \,\circ\, \bigl( F_0 + \id_{Fx} \bigr)
     & = \id_{Fx}, \\
     F_{2;x,0} \,\circ\, \bigl( \id_{Fx} + F_0 \bigr)
     & = \id_{Fx}, \andspace \\
     F_{2;y,x} \,\circ\, \beta_{Fx,Fy}
     & = F\bigl(\beta_{x,y}\bigr) \,\circ\, F_{2;x,y}.
   \end{aligned}
 \end{equation}
 Note that in the presence of symmetries, either of the unit axioms implies the other.

 In the literature, what we have called a symmetric monoidal functor is sometimes also called a \term{strong symmetric monoidal functor}, indicating that the monoidal and unit constraints are isomorphisms.
 If $F_2$ and $F_0$ are identities, we say $F$ is a \term{strict symmetric monoidal functor}.
\end{defn}

\begin{defn}\label{defn:montransf}
  Let $F,G \cn A \to B$ be symmetric monoidal functors between permutative categories.
  A \term{monoidal transformation} $\phi \cn F \To G$ consists of a natural transformation $\phi$ such that the following monoidal naturality and unit axioms hold for each pair of objects $x,y \in A$:
  \begin{equation}\label{eq:montransf}
    \begin{aligned}
      \phi_{x + y} \circ F_{2;x,y}
      & = G_{2;x,y} \circ \bigl( \phi_x + \phi_y \bigr) \andspace \\
      \phi_{0} \circ F_0
      & = G_0.
    \end{aligned}
  \end{equation}
  Composition and identities of monoidal transformations are determined componentwise.
  A monoidal transformation between strict symmetric monoidal functors is defined in the same way.
\end{defn}

\begin{notn}\label{notn:permcat}
  We write $\permcat$ for the 2-category of permutative categories, symmetric monoidal functors, and monoidal transformations.
  We write $\permcats$ for the sub 2-category whose 1-cells are strict symmetric monoidal functors, and
  \begin{equation}\label{eq:incl-j}
    j \cn \permcats \to \permcat
  \end{equation}
  for the inclusion 2-functor that is the identity on 0-, 1-, and 2-cells.
\end{notn}

\begin{rmk}\label{rmk:permcat-complete-cocomplete}
  Each of the 2-categories $\permcat$ and $\permcats$ is complete and cocomplete in the 2-categorical ($\Cat$-enriched) sense. 
  These facts follow abstractly from (2-)monadicity results; see \cite{BKP1989Two} or \cite{Kelly1972Many,Kelly1972Coherence}.

  For most of this article, our use of co/limits will require nothing deeper than their existence and universal properties.
  The sole exception is \cref{thm:smbperm1427}, immediately following this remark.

  For the 2-dimensional universal properties of colimits in particular, we refer the reader to \cite[Explanation~5.3.6]{JY212Dim} for more detail.
  Other standard references, beyond those noted above, include \cite{Kel89Elementary} and \cite{Lac10Companion}. 
\end{rmk}

\begin{thm}[{\cite[Theorem~14.27]{GJOsmbperm}}]\label{thm:smbperm1427}
  Suppose given permutative categories $A_i$ for $i \in \{1,\ldots,n\}$.
  There is a strict symmetric monoidal functor $I$
  \begin{equation}\label{eq:I}
    \coprod_{i = 1}^n A_i \fto{I} \prod_{i = 1}^n A_i
  \end{equation}
  such that the following statements hold.
  \begin{enumerate}
  \item Each composite of $I$ with the coproduct inclusions and product projections,
    \[
      A_i \to
      \coprod_{i = 1}^n A_i \fto{I} \prod_{i = 1}^n A_i
      \to A_j,
    \]
    is the identity on $A_i$ if $i = j$ and constant at the monoidal unit of $A_j$ otherwise.
  \item $I$ is an equivalence of permutative categories.
  \end{enumerate} 
\end{thm}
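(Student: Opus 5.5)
Since the 2-category is complete and cocomplete (\cref{rmk:permcat-complete-cocomplete}), we take $\coprod_i A_i$ to be the 2-categorical coproduct in $\permcats$. Its universal property applies to strict symmetric monoidal functors, and $\prod_i A_i$ is the componentwise product. We define $I$ as the unique strict functor whose restriction along the $i$th coproduct inclusion $\iota_i$ is the strict functor $A_i \to \prod_j A_j$ given by $x \mapsto (0,\ldots,0,x,0,\ldots,0)$. This functor is strict symmetric monoidal because each $\beta_{0,0}$ is an identity. Statement (1) then holds by construction.

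For statement (2), we build a pseudo-inverse $J \cn \prod_i A_i \to \coprod_i A_i$ in $\permcat$. It cannot be strict, and it is defined by
\[
  J(x_1,\ldots,x_n) = \iota_1 x_1 + \iota_2 x_2 + \cdots + \iota_n x_n ,
\]
with the evident action on morphisms. The monoidal constraint $J_{2}\cn J(x)+J(y) \to J(x+y)$ is the symmetry of $\coprod A_i$ that shuffles $\iota_1x_1+\cdots+\iota_nx_n+\iota_1y_1+\cdots+\iota_ny_n$ into $\iota_1x_1+\iota_1y_1+\cdots$. It is then followed by the identities $\iota_i x_i+\iota_i y_i=\iota_i(x_i+y_i)$, which hold because the $\iota_i$ are strict. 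The unit constraint $J_0$ is the identity. The associativity and unit axioms in \cref{eq:smfunctor} reduce to equalities between composites of symmetries with the same underlying block shuffle. The symmetry axiom likewise reduces to such an equality, after using that $\iota_i(\beta_{x_i,y_i})=\beta_{\iota_ix_i,\iota_iy_i}$. All of these follow from the symmetry axioms \cref{eq:symm-axioms} together with naturality. In the worst case they are instances of Mac~Lane coherence for the symmetric structure on the summands.

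Next, $IJ(x) = \sum_i I\iota_i x_i = \sum_i (0,\ldots,x_i,\ldots,0) = (x_1,\ldots,x_n)$, since addition in the product is componentwise and the unit is strict. Thus $IJ=\id$ on objects and morphisms. The composite constraint $I(J_2)$ has $j$th component $\pi_j I(J_2)$. This is the symmetry of $A_j$ obtained by shuffling the terms $x_j, y_j$ among zeros, and it is an identity because $\beta_{0,-}=\id$. Hence $IJ=\id$ as symmetric monoidal functors.

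The main obstacle is the other composite, for which we need an invertible monoidal transformation $\id \cong JI$ on $\coprod_i A_i$. Here we use the 2-dimensional universal property of the coproduct. That property is stated for strict maps, whereas $JI$ is only strong, so it must be applied with care. We first check that $JI\iota_i = J(0,\ldots,x,\ldots,0) = \iota_1 0 + \cdots + \iota_i x + \cdots = \iota_i x$. This means $JI\iota_i=\iota_i$ on the nose, with identity constraints, since the constraints of $JI\iota_i$ are shuffles involving only zeros. The aim is then to conclude $JI \cong \id$. To get there, we use the fact that $\coprod$ in $\permcats$ is also a bicategorical coproduct in $\permcat$. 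The first approach to try is to show that $\permcats\to\permcat$ preserves this coproduct up to equivalence. A more concrete alternative is to describe $\coprod_i A_i$ explicitly as the permutative category of words in the objects of the $A_i$, modulo the relations of the $A_i$, and then define the component $w \to JI(w)$ directly as the canonical symmetry that sorts a word by summand index and adds within each summand. Naturality and monoidality of this component again follow from coherence of symmetries, because both sides are composites of symmetries with the same underlying permutation of the letters. This gives $JI\cong\id$, so $I$ is an equivalence in $\permcat$.
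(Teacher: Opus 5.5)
The paper gives no proof of this statement; it imports it from \cite{GJOsmbperm}, so your argument has to stand on its own. Several parts are correct: your construction of $I$, part (1), the strong functor $J$, and the verification that $IJ=\id$ as symmetric monoidal functors. The axioms for $J$ do reduce to Mac~Lane coherence, applied to images of formal diagrams in $\SS$ of the letters $\iota_i x_i,\iota_i y_i,\iota_i z_i$.

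The gap is exactly at the step you flag, $\id\cong JI$. Your first route takes as a \emph{fact} that the strict coproduct is a bicategorical coproduct in $\permcat$. Nothing available gives this, and it is essentially the substance of the theorem; with it, $JI\iota_i=\iota_i$ would finish at once. Your second route chooses the right components $\theta_w$, but the justification of naturality fails as written. For $f\cn w\to w'$, the two sides $\theta_{w'}\circ f$ and $JI(f)\circ\theta_w$ are composites of symmetries only when $f$ is one, and you never say what the morphisms of $\coprod_i A_i$ are. Moreover, $\theta_w$ is defined from a chosen word representing $w$, so you need it to be independent of that word. That requires knowing that $\ob(\coprod_i A_i)$ is the free product $*_i\ob A_i$ (words modulo merging adjacent letters of equal index and deleting units), and not some further quotient.

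Both points can be repaired.
\begin{enumerate}
\item The morphisms $\iota_i g$ and the symmetries generate, under $\circ$ and $+$, a sub-permutative category through which every $\iota_i$ factors strictly. By the uniqueness clause of the universal property, this subcategory is all of $\coprod_i A_i$.
\item Monoidality of $\theta$, i.e.\ $\theta_{w+w'}=J_{2;Iw,Iw'}\circ(\theta_w+\theta_{w'})$, is a coherence statement about symmetries. Together with naturality of $J_2$, it makes the class of morphisms at which $\theta$ is natural closed under $\circ$ and $+$.
\item Naturality at $\iota_i g$ is trivial, because $\theta_{\iota_i a}=\id$ and $JI\iota_i=\iota_i$. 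Naturality at symmetries is coherence.
\end{enumerate}
For the objects, build the permutative category $W$ with object monoid $*_i\ob A_i$ and $W(w,w')=\prod_i A_i(\pi_i w,\pi_i w')$, where $\pi_i w$ is the ordered sum of the letters of index $i$. The induced strict functor $\coprod_i A_i\to W$ has object map left inverse to the canonical surjection $*_i\ob A_i\to\ob(\coprod_i A_i)$, which is therefore bijective.

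In fact, checking directly that $W$ has the universal property of the coproduct gives a shorter proof altogether. Extend strict functors $F_i\cn A_i\to B$ by conjugating $\sum_i F_i g_i$ with sorting symmetries in $B$. Uniqueness holds because every morphism of $W$ factors through sorting symmetries. The comparison $W\to\prod_i A_i$ is then visibly surjective on objects and fully faithful, so neither $J$ nor $\theta$ is needed.
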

\begin{rmk}\label{rmk:direct-sums}
  \cref{thm:smbperm1427} shows that $\permcats$ has a bicategorical analogue of direct sums, categorifying the corresponding fact for abelian groups.
  We refer the reader to \cite[Section~14]{GJOsmbperm} for further discussion and many more details of this phenomenon.
\end{rmk}

\section{Coherence for permutative categories}
\label{sec:tmp-coh}

Now we turn to coherence for permutative categories.
Here, we make use of \cref{thm:smbperm1427} to give a componentwise variant of the standard symmetric monoidal coherence due to Mac~Lane in \cite[XI.1]{ML98Categories}.
In addition to providing somewhat simpler criteria for checking underlying permutations, this development parallels our componentwise approach for invertible coherence results in \cref{sec:z-comp-2}.

\begin{notn}[Symmetric groups]\label{notn:symm-grps}
  Suppose $n \ge 0$ is a natural number.
  We write $\Sigma_n$ for the symmetric group on $n$ letters.
\end{notn}

\begin{notn}[Translation categories]\label{notn:translation}
  Suppose $S$ is a group.
  We write $ES$ for the \term{translation category} that has object set $S$ and a unique isomorphism $s \cong t$ for each pair $s, t \in S$.
  This construction is also known as the \term{translation groupoid} in the theory of group actions on sets \cite[Example~1.5.13(v)]{Rie2017CTC}.
\end{notn}

The essential ideas of the following result, \cref{thm:permcat-monadic}, were known to May \cite{may72geo,May1974Einfty}, although that work does not take a 2-monadic perspective.
\begin{thm}[{\cite[Theorem~15.12]{CG25Operads}}]\label{thm:permcat-monadic}
  The categories $E \Sigma_n$ assemble to give a symmetric operad, denoted $E\Sigma$, in $\cat$. The symmetric operad $E \Sigma$ induces a 2-monad $\SS$ on $\cat$ for which $\SAlgs \cong \permcats$ as 2-categories over $\cat$.
\end{thm}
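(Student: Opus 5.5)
We will prove the theorem in three stages: build the operad $E\Sigma$, identify algebras over the associated 2-monad $\SS$ on $\cat$, and then identify the 2-cells.

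\emph{The operad.} Put $E\Sigma(n) = E\Sigma_n$. The operad composition
\[
\gamma \cn E\Sigma_k \times E\Sigma_{n_1} \times \cdots \times E\Sigma_{n_k} \to E\Sigma_{n_1+\cdots+n_k}
\]
is given on objects by the usual block-permutation composition for the permutation operad $\Sigma$. Since each $E\Sigma_n$ is chaotic (indiscrete), a functor into it is determined by its object function. Hence $\gamma$ extends uniquely to morphisms, and every operad axiom (associativity, unit, equivariance) holds because it already holds on objects in the set-operad $\Sigma$. The symmetric group action is by right translation.

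\emph{The 2-monad.} Define
\[
\SS X = \coprod_{n \ge 0} E\Sigma_n \times_{\Sigma_n} X^n,
\]
where the quotient is the coequalizer in $\cat$. This is the standard monad associated to an operad. Its 2-functoriality comes from 2-functoriality of finite products and colimits in $\cat$, and the multiplication and unit are induced by $\gamma$ and by the identity in $E\Sigma_1$.

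\emph{Strict algebras versus permutative categories.} Given a strict algebra $\SS A \to A$, we get a functor $\theta_n \cn E\Sigma_n \times A^n \to A$ for each $n$. We extract:
\begin{itemize}
\item $0 = \theta_0$;
\item $x + y = \theta_2(e; x, y)$, where $e$ is the identity permutation;
\item $\beta_{x,y} = \theta_2(e \to \tau; x, y)$, where $e \to \tau$ is the unique morphism in $E\Sigma_2$.
\end{itemize}
Strict associativity and unitality of $+$ follow from the algebra associativity axiom applied to $\gamma(e; e, e)$ computed in two ways. The axioms \cref{eq:symm-axioms} follow because in $E\Sigma_2$ the composite $\tau \to e \to \tau$ is the identity, and in $E\Sigma_3$ the two composites of morphisms between the relevant objects agree, since the category is chaotic. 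Naturality of $\beta$ comes from functoriality of $\theta_2$.

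For the converse, given a permutative $A$, set $\theta_n(\sigma; x_1, \ldots, x_n) = x_{\sigma^{-1}(1)} + \cdots + x_{\sigma^{-1}(n)}$, with the convention fixed to match equivariance. On a morphism $\sigma \to \rho$, send it to the canonical symmetry isomorphism built from $\beta$. The main obstacle is showing that this assignment is well defined and functorial: any two composites of instances of $\beta$ (tensored with identities) between the same permuted sums must be equal. This is exactly the classical fact that in a permutative category the canonical maps are determined by their underlying permutation. I would prove it directly via a presentation of $\Sigma_n$ by adjacent transpositions with the Coxeter relations. The relation $s_i^2 = 1$ follows from the first axiom in \cref{eq:symm-axioms}. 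The braid relation follows from the hexagon (the second axiom), together with naturality of $\beta$, which gives the Yang--Baxter equation. Commutation of distant transpositions follows from functoriality of $+$. I would take care with the directions of these relations.

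\emph{Checking the correspondence.} Next we check that the algebra axioms for $\theta$ hold. They reduce to an equality of canonical maps, which again follows from this coherence. The two constructions are mutually inverse: one direction by construction, and the other because every morphism of $E\Sigma_n$ factors through generators built from $\gamma$ and $E\Sigma_2$.

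\emph{1-cells and 2-cells.} A strict $\SS$-morphism commutes with every $\theta_n$, which is equivalent to strictly preserving $0$, $+$, and $\beta$, since these generate $\theta$. An algebra 2-cell is a natural transformation compatible with every $\theta_n$. This reduces to the axioms \cref{eq:montransf} with identity constraints.

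All of these identifications commute with the forgetful functors to $\cat$, which yields the isomorphism $\SAlgs \cong \permcats$ over $\cat$.
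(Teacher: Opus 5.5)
Your proposal is correct, but it takes a genuinely different route from the paper. The paper gives no hands-on verification. It cites the Corner--Gurski theory of presentations for 2-monads $E\Lambda$, with $\Lambda$ an action operad. In that theory, a presentation of $\Lambda$ \emph{as an action operad} (generators and relations for the whole operad, not for each group separately) translates directly into data and axioms for strict $E\Lambda$-algebras. For $\Lambda=\Sigma$, the symmetry $\beta$ and the two axioms in \cref{eq:symm-axioms} are exactly such a presentation, and the isomorphism over $\cat$ follows at once.

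You proceed directly instead. You extract $(+,0,\beta)$ from an algebra using chaoticity of $E\Sigma_n$ and equivariance, which is routine and correct. For the converse you build $\theta_n$ on morphisms from the Coxeter presentation of each $\Sigma_n$: $s_i^2=1$ from the first axiom, distant commutation from functoriality of $+$, and the braid relation from Yang--Baxter via the hexagon and naturality. Your route is elementary and self-contained, but it effectively reproves Mac~Lane's symmetric coherence along the way. The paper instead reads coherence (\cref{thm:permcat-coherence-one-obj}) off afterwards from the explicit form of $\SS C$. The cited route also applies uniformly to other action operads, braid groups for instance, with no per-$n$ analysis.

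Two points should be stated precisely.

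First, the coherence you need is not that all composites of symmetries between the same two objects agree. That fails: for example $\beta_{1,1}=-1_2\neq 1_2$ in $\Z$. What you need is that formal composites with the same underlying permutation of the indexed summands agree. So $\theta_n(\sigma\to\rho;x_1,\ldots,x_n)$ must be the canonical map attached to $\rho\sigma^{-1}$ and the tuple $\sigma\cdot(x_1,\ldots,x_n)$, as your following sentence indicates.

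Second, the Coxeter argument only controls adjacent transpositions of the given entries. Verifying the algebra associativity axiom also requires that $\beta_{x_1+\cdots+x_p,\,y_1+\cdots+y_q}$ equals the word for the corresponding block transposition. This follows from the hexagon together with its mirror form, $\beta_{x+y,z}=(\beta_{x,z}+y)\circ(x+\beta_{y,z})$, which you obtain from the hexagon using $\beta_{y,x}\beta_{x,y}=\id$.
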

\begin{explanation}\label{explanation:permcat-monadic}
  The work of \cite{CG25Operads}, following earlier work of \cite{Kelly1972Coherence}, develops a theory of presentations for 2-monads (there called clubs) of the form $E\Lambda$, where $\Lambda$ is an \emph{action operad} \cite[Definition~4.1]{CG25Operads}.
  \cref{thm:permcat-monadic} follows from a special case of \cite[Theorem~15.12]{CG25Operads} with $\Lambda = \Sigma$, the action operad for the symmetric groups.
  As explained in \cite[Example~15.13(2)]{CG25Operads}, the data and axioms defining permutative categories correspond precisely to a presentation for $\Sigma$ as an action operad.
  This correspondence establishes the isomorphism of 2-categories asserted in \cref{thm:permcat-monadic}.
  See \cite[Examples~7.6 and~15.13(2)]{CG25Operads} for further explanation.
\end{explanation}

\begin{notn}[Free permutative categories]\label{notn:braces}
  If $C$ is a small category, we write $\SS C$ for the free permutative category generated by $C$.
  For each permutative category $A$, there is thus a free-forgetful 2-adjunction
  \begin{equation}\label{eq:SS-free-forget}
    \permcats(\SS C, A) \iso \cat(C,A).
  \end{equation}

  We will often apply the 2-monad $\SS$ to finite sets, treated as discrete categories, and write $\SS G$ for the free permutative category generated by a finite set $G$.
  We will use the special case $n = 1$ frequently, and write $\S$ or $\S\{x\}$ for $\SS\{x\}$.
  For general finite sets $X$, observe that $\SS$ is a left 2-adjoint and therefore the free permutative category $\SS X$ can be expressed as a coproduct in $\permcats$:
  \begin{equation}\label{eq:SSG-coprodS}
    \SS X \cong \coprod_{x \in X} \S\{x\}.
  \end{equation} 
\end{notn}

\begin{rmk}\label{rmk:free-permcats}
  The free permutative category $\SS C$ for a small category $C$ can be described as follows.
  From the operadic description of $\SS$ in \cref{thm:permcat-monadic}, we have
  \begin{equation}\label{eq:SS-C}
    \SS C = \coprod_{n \in \mathbbm{N}} E \Sigma_n \times_{\Sigma_n} C^n.
  \end{equation}
  The category $E \Sigma_n$ has a right action of the group $\Sigma_n$ by multiplication, while the category $C^n$ has a left action of the group $\Sigma_n$ by the formula
  \[
    \sigma \cdot (c_1, \ldots, c_n) = (c_{\sigma^{-1}(1)}, \ldots, c_{\sigma^{-1}(n)})
  \]
  on objects $\ang{c} = (c_1,\ldots,c_n)$, and similarly for morphisms. 
  The category $E \Sigma_n \times_{\Sigma_n} C^n$ is then given as the coequalizer
  \[
    E\Sigma_n \times \Sigma_n \times C^n \rightrightarrows 
    E\Sigma_n \times C^n \to E \Sigma_n \times_{\Sigma_n} C^n
  \]
  of these right and left actions.
  Every object of $\SS C$ is thus an equivalence class of pairs $\bigl( \sigma, (c_1, \ldots, c_n) \bigr)$, where the $c_i$ are objects of $C$ and $\sigma \in \Sigma_n$.
  The equivalence relation is given by
  \[
    \bigl( \sigma\tau, (c_1, \ldots, c_n) \bigr) \sim \bigl( \sigma, (c_{\tau^{-1}(1)}, \ldots, c_{\tau^{-1}(1)}) \bigr).
  \]
  Writing equivalence classes with square brackets, we have the equality
  \[
    [\sigma, (c_1, \ldots, c_n)] = [\id, (c_{\sigma^{-1}(1)}, \ldots, c_{\sigma^{-1}(n)})].
  \]
  Therefore, every object of $\SS C$ is uniquely expressed as a pair $(n, \ang{c})$ where $n$ is a natural number and $\ang{c}$ is an $n$-tuple of objects of $C$.

  The same analysis applies to the morphisms of $\SS C$, where it is important to recall that there is a unique isomorphism $\sigma \cong \tau$ in $E \Sigma_n$ for any pair $\sigma, \tau \in \Sigma_n$.
  For any $n$-tuple $c_1, \ldots, c_n$ of objects of $C$ and any $\sigma \in \Sigma_n$, there is an isomorphism denoted $[\sigma, 1]$:
  \begin{equation}\label{eq:sigma1}
    [\id, (c_1, \ldots, c_n)] \fto[\cong]{[\sigma,1]} [\sigma, (c_1, \ldots, c_n)] = [\id, (c_{\sigma^{-1}(1)}, \ldots, c_{\sigma^{-1}(n)})].
  \end{equation}
  Taking care with the group actions, one verifies
  \[
    [\tau, 1] \circ [\sigma, 1] = [\tau \sigma, 1],
  \]
  where $\tau \sigma$ is the product as computed in $\Sigma_n$.
  In the discrete case, the category $C$ has no non-identity morphisms, so every morphism in $\SS C$ is of the form $[\sigma, 1]$ for some permutation $\sigma$. 
\end{rmk}

\begin{notn}\label{notn:underlying-perm-map}
  For any small category $C$, let $! \cn C \to \{x\}$ denote the unique functor to a terminal category and recall $\S = \SS\{x\}$ from \cref{notn:braces}.
  We write $\omega \cn \SS C \to \S$ for $\SS !$.
\end{notn}

\begin{defn}\label{defn:underlying-single}
  An automorphism $f \cn n \cdot x \to n \cdot x$ in $\S$ has \term{underlying permutation} $\sigma$ if $f = [\sigma, 1]$ as in \cref{eq:sigma1}.
  We write $\ups(f)$ for the underlying permutation of $f$, so $\ups(f) = \sigma$ if $f = [\sigma, 1]$.
  
  Suppose $G$ is a set, and $f \cn y \to z$ is a morphism in $\SS G$.
  We say that $f$ has \term{underlying permutation} $\sigma$ if $\omega(f) = [\sigma, 1]$, where $\omega$ is the functor from \cref{notn:underlying-perm-map}.
  As before, we write $\ups(f)$ for the underlying permutation, so $\ups(f) = \sigma$ if $\omega(f) = [\sigma, 1]$.
\end{defn}

The explanation in \cref{rmk:free-permcats} yields the following coherence theorem for permutative categories \cite[XI.1]{ML98Categories}.
Recall from \cref{notn:braces} that $\S = \SS\{x\}$ denotes the free permutative category on a generator $x$.
\begin{thm}\label{thm:permcat-coherence-one-obj}
  For each natural number $n$, the function 
  \begin{equation}\label{eq:Snn-Sigman}
    \begin{aligned}
      \Sigma_n & \to \S (n \cdot x, n \cdot x), \\
      \sigma & \mapsto [\sigma,1]
    \end{aligned}
  \end{equation}
  is an isomorphism of monoids.

  Suppose, furthermore, that $G$ is a finite set.
  Then parallel morphisms $s$ and $t$ are equal in $\SS G$ if and only if $\ups(s)=\ups(t)$.
\end{thm}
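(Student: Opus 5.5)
The plan is to read off both statements from the explicit description of $\SS C$ in \cref{rmk:free-permcats}, applied to discrete categories.

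For the first statement, take $C = \{x\}$. Then $\S = \coprod_{n} E\Sigma_n \times_{\Sigma_n} \{x\}^n$. Since $\{x\}^n$ is a point, the $\Sigma_n$-action on it is trivial, so the $n$th summand is the quotient of $E\Sigma_n$ by the free right action of $\Sigma_n$. This quotient has a single object, $n \cdot x = [\id,(x,\ldots,x)]$. I will show that the map $\sigma \mapsto [\sigma,1]$ is a bijection onto its automorphisms.

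\begin{itemize}
\item \emph{Surjectivity.} \cref{rmk:free-permcats} already notes that, in the discrete case, every morphism has the form $[\sigma,1]$.
\item \emph{Injectivity.} A morphism of $E\Sigma_n$ is determined by its source and target, say $\rho \to \rho'$. Under the identification by the right action, $(\rho \to \rho') \sim (\rho\tau \to \rho'\tau)$. Hence the class of a morphism is determined by the invariant $\rho'\rho^{-1} \in \Sigma_n$. By definition, $[\sigma,1]$ is the class of $\id \to \sigma$, whose invariant is $\sigma$. Distinct $\sigma$ therefore give distinct morphisms.
\item \emph{Monoid structure.} Identities go to identities, since $\id \to \id$ is the identity. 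The relation $[\tau,1]\circ[\sigma,1] = [\tau\sigma,1]$, recorded in \cref{rmk:free-permcats}, gives compatibility with composition.
\end{itemize}

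The only care needed is fixing the convention for the invariant (left versus right multiplication) so that it matches $[\tau\sigma,1]$. I expect this bookkeeping to be the main, though minor, obstacle.

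For the second statement, the forward direction is trivial: if $s = t$ then $\omega(s) = \omega(t)$, so $\ups(s) = \ups(t)$. For the converse, let $s,t \cn y \to z$ with $y = (n,\ang{c})$ and $z = (m,\ang{d})$, where the $c_i, d_j \in G$. Since $G$ is discrete, the same argument as above shows that morphisms only exist when $m = n$. Each morphism $y \to z$ is then the class of some $\rho \to \rho'$ in $E\Sigma_n$ paired with the identity on $\ang{c}$. Normalizing the source representative to $\id$, a morphism out of $[\id,\ang{c}]$ is the class of $\id \to \sigma$, that is, $[\sigma,1]$, for a unique $\sigma$ by the invariant argument. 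Thus $s = [\sigma,1]$ and $t = [\sigma',1]$ as morphisms of $\SS G$.

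Next I check that $\omega = \SS !$ sends $[\sigma,1]$ to $[\sigma,1]$. This holds because $\SS$ acts on $E\Sigma_n \times_{\Sigma_n} C^n$ through the functor $C^n \to \{x\}^n$ and leaves the $E\Sigma_n$ factor unchanged. It follows that $\ups(s) = \sigma$ and $\ups(t) = \sigma'$. By injectivity in the first statement, $\ups(s) = \ups(t)$ gives $\sigma = \sigma'$, hence $s = t$.

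One subtlety to check here: distinct $\sigma$ may have the same target when the $c_i$ repeat. This is harmless, because the invariant argument distinguishes morphisms and not only their endpoints. Finiteness of $G$ is not needed for this step.
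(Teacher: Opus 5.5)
Your proposal is correct and follows the paper's route. The paper simply asserts that the theorem follows from the explicit description of $\SS C$ in \cref{rmk:free-permcats}, and your orbit invariant $\rho'\rho^{-1}$ (well defined because $\Sigma_n$ acts freely on objects, so the coequalizer is computed levelwise) spells out the injectivity and normal-form steps that the paper leaves implicit, including the reduction of the second statement to the first via $\omega = \SS !$.
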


Recall from \cref{thm:smbperm1427} the equivalence $I$ between finite coproducts and products of permutative categories. 
\begin{defn}\label{defn:ISS}
  Suppose $G$ is a finite set.
  Then, for each $a \in G$, let $I_a$ denote the following composite of $I$ \cref{eq:I} with \cref{eq:SSG-coprodS} and the product projection.
  \begin{equation}\label{eq:ISS}
    \begin{tikzpicture}[x=12ex,y=8ex,baseline={(a.base)}]
      \draw[0cell,] 
      (0,0) node (a0) {\SS G}
      (a0)++(8ex,0) node[text depth=3pt] (a) {\coprod_{g \in G} \S\{g\}}
      (a)++(1.3,0) node[text depth=3pt] (b) {\prod_{g \in G} \S\{g\}}
      (b)++(1,0) node (c) {\S\{a\}}
      ;
      \draw[0cell] (a0)++(3.1ex,0) node {\cong};
      \draw[1cell] 
      (a) edge node {I} node['] {\hty} (b)
      (b) edge node {} (c)
      ;
      \draw[1cell,rounded corners]
      (a0) -- ++(0,.5) -- node {I_a} ($(c)+(0,.5)$) -- (c)
      ;
    \end{tikzpicture}\vspace{.5pc}
  \end{equation}
  The \term{$a$-permutation} of a morphism $f$ in $\SS G$, denoted $\ups_a(f)$, is defined to be the underlying permutation of the morphism $I_a f$ in $\S\{a\}$:
  \begin{equation}\label{eq:a-perm}
    \ups_a(f) = \ups(I_a f).
  \end{equation}
\end{defn}

Now, by \cref{thm:smbperm1427,defn:ISS},
we reformulate \cref{thm:permcat-coherence-one-obj} using $a$-permutations instead of the underlying permutation.
\begin{thm}[{\cite[Theorem~12.7]{GJalmorcoh}}]\label{thm:diagrcoh-s}
  Let $G$ be a finite set and suppose that $s,t\cn z \to w$ is a pair of parallel morphisms in the free permutative category $\SS G$.
  If $s$ and $t$ have the same $a$-permutation for each $a \in G$, then they are equal as morphisms in $\SS G$.
\end{thm}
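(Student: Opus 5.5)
We reduce to the product side of the equivalence in \cref{thm:smbperm1427}. Write $I' \cn \SS G \to \prod_{g \in G} \S\{g\}$ for the composite of the isomorphism \cref{eq:SSG-coprodS} with $I$ \cref{eq:I}. Then the components of $I'$ are exactly the functors $I_a$ of \cref{defn:ISS}. By \cref{thm:smbperm1427}, $I$ is an equivalence of permutative categories, so its underlying functor is an equivalence of categories, and in particular it is faithful. Since \cref{eq:SSG-coprodS} is an isomorphism, $I'$ is faithful as well.

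Now let $s,t \cn z \to w$ be parallel morphisms with $\ups_a(s) = \ups_a(t)$ for every $a \in G$. Fix $a \in G$. The morphisms $I_a s$ and $I_a t$ are parallel in $\S\{a\}$. Objects of $\S\{a\}$ are of the form $n \cdot a$ by \cref{rmk:free-permcats}, and morphisms only exist between objects with the same $n$, since the coproduct \cref{eq:SS-C} is indexed by $n$. So $I_a s$ and $I_a t$ are both automorphisms of some $n \cdot a$. By \cref{defn:underlying-single}, we have $I_a s = [\ups_a(s),1]$ and $I_a t = [\ups_a(t),1]$. The first part of \cref{thm:permcat-coherence-one-obj} says that $\sigma \mapsto [\sigma,1]$ is an isomorphism onto $\S(n\cdot a, n\cdot a)$, and in particular injective. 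Hence the equality of $a$-permutations gives $I_a s = I_a t$.

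Since morphisms in a product of categories are determined by their components, $I' s = I' t$. Faithfulness of $I'$ then gives $s = t$.

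The only point that needs care is the bookkeeping that identifies the components of $I'$ with the $I_a$ and confirms that $\ups(I_a f)$ is well defined, meaning that $I_a f$ really is an endomorphism in $\S\{a\}$. Both follow from the construction of $I_a$ in \cref{defn:ISS} and the description of $\S\{a\}$ in \cref{rmk:free-permcats}. Everything else reduces directly to the faithfulness supplied by \cref{thm:smbperm1427}. Note that we never need the description of morphisms of $\SS G$ through their total underlying permutation.
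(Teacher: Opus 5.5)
Your proof is correct and follows essentially the paper's route. The equivalence $I$ of \cref{thm:smbperm1427} is faithful, so equality in $\SS G$ is detected by the projections $I_a$; on each factor $\S\{a\}$, the isomorphism $\Sigma_n \iso \S(n\cdot a, n\cdot a)$ of \cref{thm:permcat-coherence-one-obj} then recovers $I_a s$ and $I_a t$ from their $a$-permutations. Your explicit check that $I_a f$ is an automorphism of some $n\cdot a$ supplies bookkeeping the paper leaves implicit.
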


\begin{rmk}\label{rmk:coherence-parallel}
  Note, in the statements of \cref{thm:permcat-coherence-one-obj,thm:diagrcoh-s}, the condition that $s$ and $t$ are \emph{parallel} in $\SS G$ is essential.
  If $G$ has more than one element, there will be \emph{non}-equal morphisms in $\SS G$ that are \emph{not} parallel, but do have the same underlying permutation via $\omega\cn \SS G \to \S$ and also the same $a$-permutation via $I_a$ for each $a$ in $G$.
  In particular, there will be non-identity morphisms $f$, with non-equal source and target, such that the $a$-permutation $\ups_a(f)$ is an identity for each $a \in G$.
  \cref{example:permcoh-comptise} includes several such morphisms.
\end{rmk}

\begin{example}\label{example:permcoh-comptise}
  Suppose that $a$ and $b$ are objects of a permutative category $A$, and consider whether the following diagram commutes.
  \begin{equation}\label{eq:mult2-symm}
    \begin{tikzpicture}[x=30ex,y=9ex,vcenter]
      \draw[0cell] 
      (0,0) node (a) {a+a+b+b}
      (a)++(1,0) node (b) {b+b+a+a}
      (a)++(0,-1) node (c) {a+b+a+b}
      (c)++(1,0) node (d) {b+a+b+a}
      ;
      \draw[1cell] 
      (a) edge node {\beta_{a+a,b+b}}
      (b)
      
      (b) edge node {\id_a + \beta_{a,b} + \id_b}
      (d)
      
      (a) edge['] node {\id_a + \beta_{a,b} + \id_b}
      (c)
      
      (c) edge node {\beta_{a,b} + \beta_{a,b}}
      (d)
      ;
    \end{tikzpicture}
  \end{equation}
  By \cref{thm:diagrcoh-s}, it suffices to consider the $a$-permutation and $b$-permutation of each composite, rather than the total permutation of four summands.
  Each $a$- and $b$-permutation above is an identity, because none of the symmetry isomorphisms in \cref{eq:mult2-symm} permute $a$ with itself or $b$ with itself.
  So, \cref{thm:diagrcoh-s} gives a simple proof that \cref{eq:mult2-symm} commutes.
\end{example}
For additional examples and further discussion, see \cite[Sections~12 and~16]{GJalmorcoh}.

\section{Invertible objects}
\label{sec:inv-objs}

This section introduces the main object of study for this paper, invertible pairs.
These invertible pairs are structured algebraic versions of invertible objects, where invertibility for an object of a monoidal category is understood to mean with respect to the monoidal operation and only up to isomorphism.
As in many other higher categorical contexts, the apparent encumbrance of additional data for invertible pairs provides the boon of simple-to-write explicit formulas for our constructions later.
We will define a category, $\pinv{A}$, of invertible pairs in a permutative category $A$ in \cref{defn:Ainv}, and verify that this construction is 2-functorial with respect to all symmetric monoidal functors in \cref{prop:pinv-2fun}.

Several of the intermediate results in this section and the next are known to experts and available in various forms in the literature.
An incomplete list includes
\cite{KL1980Coherence,BL2004Higher,Abramsky2009No,Dug2014Coherence,HV2019Categories,HZ2023Duality}.
We include statements and proofs for clarity and for the reader's convenience, making no claim to originality.

\begin{defn}\label{defn:invertible}
  An object $x$ in a monoidal category $(A,\otimes, I)$ is said to be \term{invertible} if there is an object $x' \in A$ together with isomorphisms
  \begin{equation}\label{eq:invertible-isos}
    x \otimes x' \iso I \andspace
    x' \otimes x \iso I.
  \end{equation}
  An object $x'$ as above will be called a \term{weak inverse} to $x$.
\end{defn}

\begin{rmk}\label{rmk:triangle-and-8}
  If $A$ is a symmetric monoidal category then, in the context of \cref{defn:invertible}, the existence of either isomorphism in \cref{eq:invertible-isos} implies the existence of the other by symmetry.
  However, a pair of isomorphisms constructed in this way will generally \emph{not} satisfy the triangle identities \cref{eq:aa'-triang} required of an \term{invertible pair} (\cref{defn:inv-obj}).
  We explain this further from the perspective of coherence for invertible pairs in \cref{example:weak-vs-strict-inv}.
\end{rmk}

\begin{defn}\label{defn:inv-obj}
  Let $(A, +, 0, \beta)$ be a permutative category. An \term{invertible pair} in $A$ is a tuple
  \[
    \ul{a} = (a, a', \eta, \epz)
  \]
  consisting of
  \begin{itemize}
  \item objects $a$ and $a'$ in $A$, and
  \item isomorphisms $\eta \cn 0 \cong a'+a$ and $\epz \cn a+a' \cong 0$ in $A$.
  \end{itemize}
  The data of an invertible pair are required to make the following two diagrams commute.
  These diagrams are called the \term{triangle identities for an invertible pair}.
  \begin{equation}\label{eq:aa'-triang}
    \begin{tikzpicture}[x=18ex,y=8ex,vcenter]
      \draw[0cell] 
      (0,0) node (a) {a}
      (1,0) node (b) {a+a'+a}
      (1,-1) node (c) {a}     
      (2,0) node (d) {a'}
      (3,0) node (e) {a'+a+a'}
      (3,-1) node (f) {a'}
      ;
      \draw[1cell] 
      (a) edge node {a+\eta} (b)
      (b) edge node {\epz+a} (c)
      (a) edge[swap] node {\id_a} (c)
      (d) edge node {\eta+a'} (e)
      (e) edge node {a'+\epz} (f)
      (d) edge[swap] node {\id_{a'}} (f)
      ;
    \end{tikzpicture}
  \end{equation}
  The first coordinate $a$ for an invertible pair $\ul{a} = (a, a', \eta, \epz)$ is called the \term{base object} of $\ul{a}$.
\end{defn}

\begin{rmk}\label{rmk:inverse-equivs}
  Let $(A, +, 0, \beta)$ be a permutative category with objects $x$ and $a$.
  In the context of \cref{defn:invertible}, the isomorphisms \cref{eq:invertible-isos} imply that $x'$ is a weak inverse for $x$ if and only if the left translation functors
  \[
    x+\qmarg \cn A \lradj A \bacn x'+\qmarg
  \]
  form an equivalence of categories.

  In the context of \cref{defn:inv-obj}, the triangle identities \cref{eq:aa'-triang} imply that a quadruple $(a, a', \eta, \epz)$ is an invertible pair if and only if the left translation functors
  \[
    a+\qmarg\cn A \lradj A\bacn a'+\qmarg
  \]
  are a pair of \emph{adjoint} equivalences with unit and counit given by $\eta+\qmarg$ and $\epz+\qmarg$, respectively.
\end{rmk}

\begin{lem}\label{lem:weak-vs-structured-inv}
  Let $(A, +, 0, \beta)$ be a permutative category.
  An object $x$ is invertible in $A$ if and only if it is the base object of an invertible pair $\ul{x}$.
\end{lem}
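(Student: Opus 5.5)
The reverse implication is immediate: if $\ul{x} = (x, x', \eta, \epz)$ is an invertible pair, then $\eta^{-1}\cn x'+x \iso 0$ and $\epz\cn x+x' \iso 0$ are isomorphisms of the form \cref{eq:invertible-isos}, so $x$ is invertible with weak inverse $x'$.

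For the forward implication, suppose $x$ is invertible with weak inverse $x'$. By \cref{rmk:triangle-and-8} we may fix isomorphisms $\epz \cn x+x' \iso 0$ and $\eta_0 \cn 0 \iso x'+x$. These need not satisfy the triangle identities \cref{eq:aa'-triang}. The plan is to keep $\epz$ and replace $\eta_0$ by a corrected unit $\eta$, exactly as in the standard promotion of an equivalence to an adjoint equivalence. By \cref{rmk:inverse-equivs}, translation by $x$ and by $x'$ gives an equivalence of categories $x+\qmarg \cn A \lradj A \bacn x'+\qmarg$.

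We carry out the correction concretely in $A$ rather than for functors, so that $\eta$ is a single morphism $0 \to x'+x$. Let $\theta \cn x+x'+x \to x$ denote the composite $(\epz + x)$, using strict associativity and unitality. Define
\[
\eta \;=\; (x'+\theta)^{-1}\circ (x'+x+\eta_0^{-1})^{-1}\circ \cdots
\]
More cleanly, we take the morphism $\eta\cn 0 \to x'+x$ to be the unique morphism such that
\[
x+\eta \;=\; (\epz+x)^{-1} \cn x \to x+x'+x.
\]
This works because $x+\qmarg$ is an equivalence, hence fully faithful, so $A(0,x'+x)\to A(x,x+x'+x)$ is a bijection. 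With this choice the first triangle identity $(\epz+x)\circ(x+\eta)=\id_x$ holds by construction, and $\eta$ is an isomorphism because fully faithful functors reflect isomorphisms.

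The main step, and the expected obstacle, is the second triangle identity $(x'+\epz)\circ(\eta+x')=\id_{x'}$. We proceed as in the usual proof that one triangle identity implies the other for an equivalence.

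1. Let $\phi=(x'+\epz)\circ(\eta+x')\cn x'\to x'$. Applying the faithful functor $x+\qmarg$, it suffices to show $x+\phi=\id_{x+x'}$.

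2. By naturality of $\epz+\qmarg$ in the sense of interchange in the strict monoidal structure, $\epz\circ(x+\phi)=\phi\circ\epz$ after suitable whiskering. Concretely, $(\epz + x') \circ (x+x'+\epz) = (x+x'+\epz)\circ(\epz + x+x')$ modulo identifying $0+y=y$.

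3. Expanding $x+\phi=(x+x'+\epz)\circ(x+\eta+x')$ and substituting $x+\eta=(\epz+x)^{-1}$ reduces the claim to $\epz \circ (x+\phi)=\epz$.

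4. Since $\epz$ is invertible, this gives $x+\phi=\id$, hence $\phi=\id$.

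The delicate part of this computation is the bookkeeping of strict associativity and the interchange law. No symmetry is needed, so the argument works in any strict monoidal category where $x+\qmarg$ is an equivalence.
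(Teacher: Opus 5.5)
Your argument is correct. It rests on the same idea as the paper's proof, promoting an equivalence to an adjoint equivalence, but you carry it out by hand rather than citing it.

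The paper simply invokes the general fact that an equivalence of categories can be upgraded to an adjoint equivalence, and applies it to $x+\qmarg$ and $x'+\qmarg$ via \cref{rmk:inverse-equivs}. Read literally, that leaves one point implicit. \cref{rmk:inverse-equivs} identifies invertible pairs with adjoint equivalences whose unit and counit are \emph{translations} $\eta+\qmarg$ and $\epz+\qmarg$. So one needs the upgraded unit or counit to again be a translation. This holds for the standard construction, which only composes whiskered translations, and the paper alludes to it only later, in \cref{example:weak-vs-strict-inv}.

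You instead work directly with the single components at $0$:
\begin{itemize}
\item full faithfulness of $x+\qmarg$ produces $\eta$ with $x+\eta=(\epz+x)^{-1}=\epz^{-1}+x$;
\item the first triangle identity then holds by construction;
\item the second follows from faithfulness together with interchange.
\end{itemize}
The translation issue therefore never arises, the proof is self-contained, and, as you note, it uses no symmetry. The paper's route buys brevity; yours buys explicitness. You keep $\epz$ and correct the unit, whereas \cref{example:weak-vs-strict-inv} keeps $\eta$ and corrects the counit; the two choices are dual and equally valid.

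Two clean-ups are needed. First, delete the abandoned first definition of $\eta$, the one trailing off in $\cdots$. Second, step 2 as written does not typecheck: for instance, $\epz+x'$ has domain $x+x'+x'$, not $x+x'$. The identity you need is naturality of $\epz+\qmarg$ at the morphism $\epz$, namely $\epz\circ(x+x'+\epz)=\epz\circ(\epz+x+x')$, where both sides equal $\epz+\epz$. You can in fact bypass steps 2 through 4. Using interchange and strictness of the unit,
\[
  x+\phi=(x+x'+\epz)\circ(x+\eta+x')=(x+x'+\epz)\circ(\epz^{-1}+x+x')=\epz^{-1}+\epz=\epz^{-1}\circ\epz=\id_{x+x'},
\]
and faithfulness of $x+\qmarg$ then gives $\phi=\id_{x'}$.
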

\begin{proof}
  Recall that a pair of functors $(F,G)$ form an equivalence of categories if and only if they are adjoint equivalences (see \cite[IV.4]{ML98Categories} or \cite[Theorem~1.4]{Gur2012Biequivalences}).
  The result then follows from \cref{rmk:inverse-equivs} by considering $F=(x+\qmarg)$ and $G=(x'+\qmarg)$.
\end{proof}

\begin{rmk}\label{rmk:not-smf}
  We point out that $a+\qmarg$ is generally not a monoidal functor, much less a symmetric monoidal functor.
  However, functoriality of $+$ implies, for any $z \in A$ and each $x \in A$, that 
  \begin{equation}\label{eq:zqmark-hom}
    z+\qmarg\cn A(x,x) \to A(z+x,z+x)
  \end{equation}
  is a homomorphism of monoids with respect to composition.
  If $z$ is an invertible object, then $z+\qmarg\cn A \to A$ is an equivalence (\cref{rmk:inverse-equivs}) and hence \cref{eq:zqmark-hom} is an isomorphism of monoids.
\end{rmk}

\begin{defn}\label{defn:inv-obj-map}
  Let $(A, +, 0, \beta)$ be a permutative category, and let $(a, a', \eta, \epz)$ and $(b,b', \delta, \gamma)$ be invertible pairs in $A$.
  A \term{map of invertible pairs} from $(a, a', \eta, \epz)$ to $(b,b', \delta, \gamma)$ is a pair $(f, f')$ consisting of morphisms $f:a \to b$ and $f':a' \to b'$ in $A$ such that the following diagrams commute.
  \begin{equation}\label{eq:inv-obj-map}
    \begin{tikzpicture}[x=18ex,y=8ex,vcenter]
      \draw[0cell] 
      (0,0) node (a) {0}
      (1,0) node (b) {a'+a}
      (1,-1) node (c) {b'+b}
      
      (3,0) node (f) {0}
      (2,0) node (d) {a+a'}
      (2,-1) node (e) {b+b'}
      ;
      \draw[1cell] 
      (a) edge node {\eta} (b)
      (b) edge node {f'+f} (c)
      (a) edge[swap] node {\delta} (c)
      
      (d) edge['] node {f+f'} (e)
      (e) edge['] node {\gamma} (f)
      (d) edge node {\epz} (f)
      ;
    \end{tikzpicture}
    \vspace{-1pc} 
  \end{equation}
  \ 
\end{defn}

\begin{defn}\label{defn:Ainv}
Let $(A, +, 0, \beta)$ be a permutative category. The \term{category of invertible pairs} in $A$, denoted $\pinv{A}$, consists of
\begin{itemize}
\item objects the invertible pairs in $A$,
\item morphisms the maps of invertible pairs,
\item identities given by $(\id_a, \id_{a'})\cn (a, a', \eta, \epz) \to (a, a', \eta, \epz)$, and 
\item composition given by $(g, g') \circ (f, f') = (g \circ f, g' \circ f')$.
\end{itemize}
\end{defn}

Both of the claims in the next lemma follow simply by writing out the definitions and manipulating inverses.

\begin{lem}\label{lem:inv-switch}
Let $(A, +, 0, \beta)$ be a permutative category. 
\begin{enumerate}
\item\label{it:inv-switch-pair} The tuple $(a, a', \eta, \epz)$ is an invertible pair if and only if $(a', a, \epz^{-1}, \eta^{-1})$ is an invertible pair. 
\item\label{it:inv-switch-map} The pair $(f, f')$ is a map of invertible pairs $(a, a', \eta, \epz) \to (b, b', \delta, \gamma)$ if and only if $(f', f)$ is a map of invertible pairs  
\[
(a', a, \epz^{-1}, \eta^{-1}) \to (b', b, \gamma^{-1}, \delta^{-1}).
\]
\end{enumerate}
\end{lem}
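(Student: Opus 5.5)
Both parts are direct unwindings of \cref{defn:inv-obj} and \cref{defn:inv-obj-map}; the only tool is that inverting every morphism in a commutative diagram of isomorphisms yields a commutative diagram. Since each statement is symmetric under applying the operation twice (note $(\epz^{-1})^{-1} = \epz$ and $(\eta^{-1})^{-1}=\eta$), it suffices in each part to prove one implication.

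For \eqref{it:inv-switch-pair}, suppose $(a,a',\eta,\epz)$ is an invertible pair. The candidate $(a',a,\epz^{-1},\eta^{-1})$ has the correct types: $\epz^{-1}\cn 0 \to a+a'$ is the required unit, with $a'$ as base object, and $\eta^{-1}\cn a'+a \to 0$ is the counit. Its first triangle identity requires
\[
  (\eta^{-1}+a')\circ(a'+\epz^{-1}) = \id_{a'}.
\]
This composite is the inverse of $(a'+\epz)\circ(\eta+a')$, which is $\id_{a'}$ by the second triangle identity in \cref{eq:aa'-triang} for the original pair. Here we use functoriality of $+$, so that $(a'+\epz)^{-1} = a'+\epz^{-1}$. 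Likewise, the second triangle identity for the new tuple, $(a+\eta^{-1})\circ(\epz^{-1}+a)=\id_a$, is the inverse of the first original triangle identity $(\epz+a)\circ(a+\eta)=\id_a$. Thus the two triangle identities simply exchange roles.

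For \eqref{it:inv-switch-map}, suppose $(f,f')$ is a map $(a,a',\eta,\epz)\to(b,b',\delta,\gamma)$. For $(f',f)$ to be a map between the switched pairs, the conditions of \cref{eq:inv-obj-map} read $(f+f')\circ\epz^{-1} = \gamma^{-1}$ and $\delta^{-1}\circ(f'+f) = \eta^{-1}$. The first follows from the original counit condition $\gamma\circ(f+f')=\epz$ by composing on the left with $\gamma^{-1}$ and on the right with $\epz^{-1}$. Note that $f$ and $f'$ need not be invertible, so we pre- and post-compose rather than invert. The second condition follows from the original unit condition $(f'+f)\circ\eta=\delta$ in the same way. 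As before, the two conditions exchange roles.

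There is no real obstacle here. The only point requiring care is bookkeeping: one must check which triangle identity, or which unit/counit condition, corresponds to which under the switch. One must also avoid inverting $f$ and $f'$, which are not assumed to be isomorphisms.
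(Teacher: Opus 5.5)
Your proposal is correct and follows the paper's approach. The paper only remarks that both claims follow by writing out the definitions and manipulating inverses, and you carry this out accurately: each switched triangle identity is the inverse of the other original one, and part (2) comes from pre- and post-composing rather than inverting $f$ and $f'$. That choice is appropriate, since the invertibility of $f$ and $f'$ is proved only later, using this very lemma.
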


\begin{lem}\label{lem:Ainv-gpd}
  Let $(A, +, 0, \beta)$ be a permutative category with invertible pairs
  $\ul{a} = (a,a', \eta, \epz)$ and $\ul{b} = (b,b', \delta, \gamma)$.
  If $(f, f') \cn \ul{a} \to \ul{b}$ is a map of invertible pairs, then both $f$ and $f'$ are invertible in $A$.
\end{lem}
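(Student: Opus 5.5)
The plan is to write down an explicit inverse for $f$ and another for $f'$, each built from $f'$ (respectively $f$) together with the structure isomorphisms $\eta,\epz,\delta,\gamma$. This is the usual mate construction for the adjoint equivalences of \cref{rmk:inverse-equivs}. For $f\cn a \to b$ I take the candidate inverse
\[
  g \;=\; (\epz + a)\circ(a + f' + a)\circ(a + \delta^{-1}\!\!\!\!\quad)^{-1}\ldots
\]
More concretely, I use the composite
\[
  b \fto{\delta + b}\; \text{(no)}
\]
and, correcting orientation, the composite
\[
  g\cn b \fto{\ a+\ \text{?}}\ \cdots
\]
Written cleanly, the candidate is
\[
  g = (\epz + a)\circ (a + f' + b)^{\ast}\circ(a + \delta')\ldots
\]
where the exact ordering of summands is fixed by the source and target. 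The intended shape is $b \to a+a'+b \to a+b'+b \to a$. The first map is $\epz^{-1}+b$. The middle map is $a+f'+b$. The last map should be $a+\delta^{-1}$, using $\delta\cn 0\to b'+b$.

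The first key step is to check that $g\circ f=\id_a$. By naturality of $+$, the composite $a \fto{f} b \fto{\epz^{-1}+b} a+a'+b$ equals $(a+a'+f)\circ(\epz^{-1}+a)$. Applying $a+f'+b$ then produces $a+(f'+f)$ in the middle. The left square of \cref{eq:inv-obj-map} replaces $(f'+f)\circ\eta$ by $\delta$, so $a+\delta^{-1}$ composed with $a+(f'+f)$ becomes $a+\eta^{-1}$. What remains is $(a+\eta^{-1})\circ(\epz^{-1}+a)$. This is the inverse of $(\epz+a)\circ(a+\eta)$, which equals $\id_a$ by the triangle identity \cref{eq:aa'-triang}.

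The second step is to show that $f\circ g=\id_b$. Here I use the right square $\gamma\circ(f+f')=\epz$ in the same way, together with the triangle identity for $(b,b',\delta,\gamma)$. Concretely, $f\circ(a+\delta^{-1})=(b+\delta^{-1})\circ(f+b'+b)$. This combines with $f'$ to give $(f+f')+b$, and the identity $(f+f')+b\circ(\epz^{-1}+b)=\gamma^{-1}+b$ reduces the whole composite to $(b+\delta^{-1})\circ(\gamma^{-1}+b)$. That is the identity by the triangle identity for $\ul b$.

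Finally, the invertibility of $f'$ follows by symmetry. By \cref{lem:inv-switch}\eqref{it:inv-switch-map}, $(f',f)$ is a map of the switched invertible pairs, so the argument just given applies verbatim. The main obstacle is bookkeeping: placing the units and counits correctly so that each square of \cref{eq:inv-obj-map} is used in the right direction. I expect that once the composite $g$ is fixed as above, everything else is interchange plus the triangle identities.
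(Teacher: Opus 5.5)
Your proof is correct and is essentially the paper's argument. Your inverse $g=(a+\delta^{-1})\circ(a+f'+b)\circ(\epz^{-1}+b)$ is exactly the paper's explicit inverse $(a'+\gamma)\circ(a'+f+b')\circ(\eta+b')$ for $f'$, transported through \cref{lem:inv-switch}, and you verify it the same way (interchange, one map-of-pairs axiom, one triangle identity) before obtaining the other component from \cref{lem:inv-switch}; you even check both composites where the paper only says the second is ``similar'', but you should delete the ill-typed false starts at the beginning, since only the final composite $b\to a+a'+b\to a+b'+b\to a$ is the right one.
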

\begin{proof}
  We will prove that $f'$ is invertible.
  \cref{lem:inv-switch} then implies that $f$ is invertible as well.

  To show that $f'$ is invertible, consider the diagram below.
  \[
    \begin{tikzpicture}[x=24ex,y=9ex]
      \draw[0cell] 
      (0,0) node (a) {b'}
      (.8,0) node (b) {a'+a+b'}
      (2,0) node (c) {a'+b+b'}
      (2.8,0) node (d) {a'}
      (2.8,-1) node (e) {b'}
      (2,-1) node (f) {b'+b+b'}
      
      (.75,-.4) node (1) {\textc{1}}
      (1.75,-.4) node (2) {\textc{2}}
      (2.65,-.4) node (3) {\textc{3}}
      (1.5,-1.9) node (4) {\textc{4}}
      ;
      \draw[1cell] 
      (a) edge node {\eta + \id_{b'}} (b)
      (b) edge node {\id_{a'}+f+\id_{b'}} (c)
      (c) edge node {\id_{a'}+\gamma} (d)
      (d) edge node {f'} (e)
      (c) edge node {f'+\id_b+\id_{b'}} (f)
      (f) edge node['] {\id_{b'}+\gamma} (e)
      (b) edge[swap] node {f'+f+\id_{b'}} (f)
      (a) edge[bend right,swap] node {\delta + \id_{b'}} (f)
      (a) edge[out=-75,in=-135,swap] node {\id_{b'}} (e)
      ;
    \end{tikzpicture}
  \]
  Region $\textc{1}$ commutes by the first map of invertible pairs axiom \cref{eq:inv-obj-map}.
  Regions $\textc{2}$ and $\textc{3}$ commute by the functoriality of $+$.
  Region $\textc{4}$ commutes by the second triangle identity for invertible pairs \cref{eq:aa'-triang}.
  Thus the composite along the top is a right inverse for $f'$, and a similar calculation shows it is a left inverse as well.
  Therefore $f'$ is invertible, completing the proof.
\end{proof}

\begin{lem}\label{lem:iso-to-mapofinv}
  Let $(A, +, 0, \beta)$ be a permutative category with invertible pairs $(a, a', \eta, \epz)$ and $(b, b', \delta, \gamma)$.
  If $f \cn a \cong b$ is an isomorphism in $A$, then there exists a unique isomorphism $f^{\dagger}\cn a' \cong b'$ such that $(f, f^{\dagger})$ is a map of invertible pairs $(a, a', \eta, \epz) \to (b, b', \delta, \gamma)$.
\end{lem}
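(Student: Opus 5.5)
The plan is to construct $f^{\dagger}$ explicitly as a mate of $f^{-1}$, to verify the two axioms \cref{eq:inv-obj-map}, and then to prove uniqueness by cancelling invertible summands.

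\emph{Construction.} I would define $f^{\dagger}\cn a' \to b'$ as the composite
\[
  a' \fto{\delta + a'} b'+b+a' \fto{b'+f^{-1}+a'} b'+a+a' \fto{b'+\epz} b'.
\]
This is the mate of $f^{-1}$ under the adjoint equivalences $a+\qmarg \dashv a'+\qmarg$ and $b+\qmarg \dashv b'+\qmarg$ of \cref{rmk:inverse-equivs}. A symmetric formula using $\eta$ and $\gamma$, namely $(a'+\gamma)\circ(a'+f+b')\circ(\eta+b')$, should give a two-sided inverse for $f^{\dagger}$. Verifying this uses the triangle identities \cref{eq:aa'-triang} for both pairs together with functoriality (interchange) of $+$, exactly as in the diagram chase of \cref{lem:Ainv-gpd}. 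Alternatively, \cref{lem:Ainv-gpd} gives invertibility for free once the map axioms are established.

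\emph{Axioms.} For the first axiom, $(f^{\dagger}+f)\circ\eta = \delta$, I would expand $f^{\dagger}+f$ and slide $\delta$ past $\eta$ using interchange. The terms $f^{-1}$ and $f$ then meet and cancel, and the second triangle identity for $(a,a',\eta,\epz)$ collapses $(a'+\epz)\circ(\eta+a')$ to the identity, leaving $\delta$.

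For the second axiom, $\gamma\circ(f+f^{\dagger}) = \epz$, I would avoid a direct chase. Set $\gamma' = \epz\circ(f^{-1}+(f^{\dagger})^{-1})$. Then $(b,b',\delta,\gamma')$ is an invertible pair, being the transport of $(a,a',\eta,\epz)$ along the isomorphisms $(f,f^{\dagger})$; the triangle identities transport because $\delta = (f^{\dagger}+f)\circ\eta$ by the first axiom. By \cref{rmk:inverse-equivs}, both $(b,b',\delta,\gamma)$ and $(b,b',\delta,\gamma')$ then exhibit $b+\qmarg \dashv b'+\qmarg$ as adjoint equivalences with the same unit $\delta+\qmarg$. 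Since the counit of an adjunction is determined by its unit, $\gamma+\qmarg = \gamma'+\qmarg$, and evaluating at $0$ gives $\gamma=\gamma'$, which is the second axiom. I expect this step to be the main obstacle. It needs care in checking that transport preserves the triangle identities, but it avoids the messier direct diagram chase, which would be the fallback.

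\emph{Uniqueness.} Suppose $(f,g)$ and $(f,h)$ are both maps of invertible pairs. The first axiom gives $(g+f)\circ\eta = \delta = (h+f)\circ\eta$. Since $\eta$ is an isomorphism, $g+f = h+f$, and precomposing with $a'+f^{-1}$ yields $g+b = h+b$. Conjugating by the symmetry, naturality of $\beta$ gives $b+g = \beta_{g\text{-side}}\circ(g+b)\circ\beta^{-1}$, and similarly for $h$, so $b+g=b+h$. Because $b+\qmarg$ is an equivalence (\cref{rmk:inverse-equivs}), it is faithful, and therefore $g=h$.
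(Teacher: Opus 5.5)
Your construction of $f^\dagger$ and your check of the first axiom are the same as the paper's. The rest of your argument takes a different route, and it works.

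\textbf{How the paper proceeds.} It verifies the second axiom by another direct diagram chase, and leaves invertibility of $f^\dagger$ to \cref{lem:Ainv-gpd}. It proves uniqueness from the \emph{counit} axiom alone. If $\gamma\circ(f+g)=\epz$, write $g=(b'+\gamma)\circ(\delta+b')\circ g$ and slide $g$ past $\delta$. Using the hypothesis, this becomes exactly the defining formula for $f^\dagger$, with no use of $\beta$.

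\textbf{How you proceed.} You get the second axiom from the fact that the unit $\delta+\qmarg$ of the adjunction $b+\qmarg\dashv b'+\qmarg$ determines its counit, applied to $\gamma$ and the transported counit $\gamma'$. You prove uniqueness from the \emph{unit} axiom: cancel $\eta$ and $f$, conjugate by $\beta$, and use faithfulness of $b+\qmarg$.

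\textbf{Comparison.} Your route is more conceptual, but it is not cheaper. Because $\gamma'$ involves $(f^\dagger)^{-1}$, you must establish invertibility of $f^\dagger$ before proving the second axiom. Your fallback via \cref{lem:Ainv-gpd} presupposes both map axioms, so it would be circular here. The direct two-sided inverse check you propose does go through; it uses all four triangle identities and no map axiom.

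\textbf{One small slip.} In the first-axiom computation, the zigzag that collapses is the \emph{first} triangle identity $(\epz+a)\circ(a+\eta)=\id_a$ (equivalently $\epz+a=a+\eta^{-1}$), not $(a'+\epz)\circ(\eta+a')$. Only after that zigzag is removed do $f^{-1}$ and $f$ meet and cancel.
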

\begin{proof}
  Let $f^{\dagger}$ be defined as the composite
  \[
    a' \fto{\delta + \id_{a'}} b'+b+a' \fto{\id_{b'} + f^{-1} + \id_{a'}} b'+a+a' \fto{\id_{b'} + \epz} b'.
  \]
  The diagram below shows that the first axiom \cref{eq:inv-obj-map} holds, and the second is similar.
  \[
    \begin{tikzpicture}[x=14ex,y=8ex]
      \draw[0cell] 
      (0,0) node (a) {0}
      (a)++(0,1) node (b) {a'+a}
      (b)++(1,1) node (c) {b'+b+a'+a}
      (c)++(2.5,0) node (d) {b'+a+a'+a}
      (d)++(1,-1) node (e) {b'+a}
      (e)++(0,-1) node (f) {b'+b}
      
      (d)++(-35:.7) node (1) {\textc{1}}
      (1)++(-1.2,-.1) node (2) {\textc{2}}
      (2)++(-1.8,-.2) node (3) {\textc{3}}
      ;
      \draw[1cell] 
      (a) edge node {\eta} (b)
      (b) edge[] node {\delta+ \id_{a'+a}} (c)
      (c) edge[] node {\id_{b'}+f^{-1}+\id_{a'+a}} (d)
      (d) edge[bend left=30] node {\id_{b'} +\epz+ \id_a} (e)
      (e) edge[] node {\id_{b'}+f} (f)
      
      (d) edge[bend right=13,swap] node {\id_{b'}+ \id_a + \eta^{-1}} (e)
      (c) edge[swap] node[pos=.3] {\id_{b' + b} + \eta^{-1}} (f)
      (a) edge node {\delta} (f)
      ;
    \end{tikzpicture}
  \]
  Region $\textc{1}$ commutes by the triangle identities \cref{eq:aa'-triang} for $(a,a',\eta,\epz)$.
  Regions $\textc{2}$ and $\textc{3}$ commute by functoriality of $+$ and, in the latter case, cancellation of $\eta$ with $\eta^{-1}$.

  To prove uniqueness, assume that $g \cn a' \to b'$ makes the second diagram \cref{eq:inv-obj-map} for a map of invertible pairs commute, and consider the diagram below. 
  \begin{equation}\label{eq:g-fdagger}
    \begin{tikzpicture}[x=30ex,y=9ex,vcenter]
      \draw[0cell] 
      (0,0) node (a) {a'}
      (1,0) node (b) {b'}
      (0,-1) node (d) {b'+b+a'}
      (1,-1) node (e) {b'+b+b'}
      (0,-2) node (f) {b'+a+a'}
      (1,-2) node (g) {b'}
      
      (.5,-.4) node (1) {\textc{1}}
      (1.15,-.4) node (2) {\textc{2}}
      (.5,-1.4) node (3) {\textc{3}}
      ;
      \draw[1cell] 
      (a) edge node {g} (b)
      (a) edge[swap] node {\delta+ \id_{a'}} (d)
      (b) edge[swap] node {\delta+ \id_{b'}} (e)
      (d) edge[] node {\id_{b'+b}+ g} (e)
      (e) edge[swap] node {\id_{b'}+\gamma} (g)
      (d) edge[swap] node {\id_{b'}+ f^{-1} +\id_{a'}} (f)
      (f) edge node {\id_{b'}+ \epz} (g)
      ;
      \draw[1cell, rounded corners]
      (b) -- ($(b)+(.3,0)$) -- node {\id_{b'}} ($(g)+(.3,0)$) -- (g)
      ;
    \end{tikzpicture}
  \end{equation}
  Region $\textc{1}$ commutes by functoriality of $+$.
  Region $\textc{2}$ commutes by the triangle identities \cref{eq:aa'-triang} for $(b,b',\de,\ga)$.
  Region $\textc{3}$ commutes by the assumption on $g$ above.
  The composite around the top and right of \cref{eq:g-fdagger} is $g$, while the composite around the left and bottom is $f^{\dagger}$.
  Therefore $g = f^{\dagger}$, proving uniqueness and finishing the proof.
\end{proof}

\begin{defn}\label{defn:smf-map-inv}
  Let $(A, +, 0, \beta)$ and $(B, +, 0, \beta)$ be permutative categories, and let $F:A \to B$ be a symmetric monoidal functor.
  For each invertible pair $\ul{a} = (a,a',\eta,\epz)$, define $\eta^F$ to be the composite
  \begin{equation}\label{eq:smf-map-inv-eta}
    0 \fto{F_0} F(0) \fto{F\eta} F(a'+a) \fto{F_2^{-1}} F(a') + F(a),
  \end{equation}
  and define $\epz^F$ to be the composite
  \begin{equation}\label{eq:smf-map-inv-epz}
    F(a) + F(a') \fto{F_2} F(a+a') \fto{F\epz} F(0) \fto{F_0^{-1}} 0.
  \end{equation}
\end{defn}
\begin{lem}\label{lem:smf-map-inv}
  In the context of \cref{defn:smf-map-inv}, the following statements hold.
  \begin{enumerate}
  \item\label{it:smf-i} Suppose that $\ul{a} = (a, a', \eta, \epz)$ is an invertible pair in $A$.
    The quadruple $F\ul{a} = \bigl( F(a), F(a'), \eta^F, \epz^F\bigr)$ is an invertible pair in $B$.
  \item\label{it:smf-ii} Suppose that $\ul{a} = (a, a', \eta, \epz)$ and $\ul{b} = (b, b', \delta, \gamma)$ are invertible pairs in $A$, and $(f, f')$ is a map of invertible pairs $\ul{a} \to \ul{b}$.
    Then the pair $(Ff, Ff')$ is a map of invertible pairs $F\ul{a} \to F\ul{b}$ in $B$.
  \end{enumerate}
  With the assignments above, $F$ induces a functor
  \[
    \pinv{F} \cn \pinv{A} \to \pinv{B}.
  \]
\end{lem}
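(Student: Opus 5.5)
The plan is to verify the two triangle identities \cref{eq:aa'-triang} for $F\ul{a}$ and the two axioms \cref{eq:inv-obj-map} for $(Ff,Ff')$ directly. Each check uses only the axioms \cref{eq:smfunctor} for $F$ and naturality of $F_2$. Functoriality of $\pinv{F}$ then follows at once, because the assignment on morphisms is $(f,f')\mapsto(Ff,Ff')$ and $F$ preserves identities and composites.

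For part \eqref{it:smf-i}, consider the first triangle identity, namely the composite
\[
  Fa \fto{Fa+\eta^F} Fa+Fa'+Fa \fto{\epz^F+Fa} Fa.
\]
I will expand $\eta^F$ and $\epz^F$ using \cref{eq:smf-map-inv-eta,eq:smf-map-inv-epz} and insert the constraints $F_{2;a,a'+a}$ and $F_{2;a+a',a}$ into the middle. The associativity axiom for $F_2$ identifies
\[
  F_{2;a,a'+a}\circ(Fa+F_{2;a',a}) \andspace F_{2;a+a',a}\circ(F_{2;a,a'}+Fa),
\]
so the factors $F_2$ and $F_2^{-1}$ cancel in the middle. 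Naturality of $F_2$ then moves $F\eta$ and $F\epz$ inside $F$. The two ends are handled by the unit axioms: the composite $Fa \to Fa+F0 \to F(a+0)=Fa$ built from $F_0$ and $F_2$ is the identity, and similarly on the left. This reduces the composite to
\[
  F\bigl((\epz+a)\circ(a+\eta)\bigr) = F(\id_a) = \id_{Fa}.
\]
The second triangle identity is symmetric. Alternatively, it follows by applying the first argument to the switched pair of \cref{lem:inv-switch}\eqref{it:inv-switch-pair}, after checking that $(\epz^{-1})^F=(\epz^F)^{-1}$ and $(\eta^{-1})^F=(\eta^F)^{-1}$, which is immediate from the definitions.

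For part \eqref{it:smf-ii}, naturality of $F_2$ gives
\[
  (Ff'+Ff)\circ F_2^{-1} = F_2^{-1}\circ F(f'+f).
\]
Hence
\[
  (Ff'+Ff)\circ\eta^F = F_2^{-1}\circ F\bigl((f'+f)\circ\eta\bigr)\circ F_0 = F_2^{-1}\circ F\delta\circ F_0 = \delta^F.
\]
The counit axiom is analogous.

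The main obstacle is the bookkeeping in the triangle identity: arranging the large diagram so that each region is either an associativity or unit axiom of $F$, or a naturality square. The key point is recognizing that the unit axioms are exactly what is needed to absorb $F_0$ and $F_0^{-1}$ at the ends.
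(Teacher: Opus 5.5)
Your proposal is correct and follows the same route as the paper: the paper's proof simply asserts that part (i) follows from the monoidal functor axioms for $F$ together with the triangle identities for $\ul{a}$, and part (ii) from naturality of $F_2$. You have written out exactly those verifications. Your handling of the second triangle identity via the switched pair is also valid, since $(\epz^{-1})^F=(\epz^F)^{-1}$ and $(\eta^{-1})^F=(\eta^F)^{-1}$.
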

\begin{proof}
  Using \cref{eq:smf-map-inv-eta,eq:smf-map-inv-epz}, the data of $F\ul{a}$ in \cref{it:smf-i} constitute an invertible pair by straightforward application of the monoidal functor axioms for $F$ and the invertible pair axioms for $\ul{a}$.
  The proof of \cref{it:smf-ii} follows from \cref{eq:smf-map-inv-eta,eq:smf-map-inv-epz} together with naturality of $F_2$.
  The functoriality of $\pinv{F}$ is an immediate consequence of the functoriality of $F$ and the componentwise composition in $\pinv{A}$ and $\pinv{B}$ (\cref{defn:Ainv}).
  These observations finish the proof.
\end{proof}

\begin{prop}\label{prop:pinv-2fun}
  The assignment $(A, +, 0, \beta) \mapsto \pinv{A}$ is the function on objects of a 2-functor $\pinv{(-)} \cn \permcat \to \cat$.
\end{prop}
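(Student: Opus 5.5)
Having \cref{lem:smf-map-inv} in hand, the assignment on 0-cells and 1-cells is already defined: $A \mapsto \pinv{A}$ and $F \mapsto \pinv{F}$. The remaining tasks are three. First, define $\pinv{(-)}$ on 2-cells. Second, check that identities and composition of 1-cells are preserved strictly. Third, check that the 2-cell assignment respects vertical and horizontal composition and identities.

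For 2-cells, let $\phi \cn F \To G$ be a monoidal transformation. For each invertible pair $\ul{a} = (a,a',\eta,\epz)$, the component should be $(\pinv{\phi})_{\ul{a}} = (\phi_a, \phi_{a'}) \cn F\ul{a} \to G\ul{a}$. We must show this is a map of invertible pairs. For the $\eta$-axiom, we need
\[
(\phi_{a'}+\phi_a)\circ F_2^{-1}\circ F\eta\circ F_0 = G_2^{-1}\circ G\eta\circ G_0 .
\]
The argument goes in three steps. Monoidal naturality \cref{eq:montransf} gives $(\phi_{a'}+\phi_a)\circ F_2^{-1} = G_2^{-1}\circ \phi_{a'+a}$. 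Naturality of $\phi$ then gives $\phi_{a'+a}\circ F\eta = G\eta\circ\phi_0$. Finally, the unit axiom gives $\phi_0\circ F_0 = G_0$. The $\epz$-axiom is the same computation read in reverse, using $\phi_0^{-1}$ in the form $G_0^{-1}\circ\phi_0 = F_0^{-1}$. Naturality of $(\phi_a,\phi_{a'})$ in $\ul{a}$ holds because morphisms in $\pinv{A}$ are pairs and composition is componentwise, so it reduces to naturality of $\phi$ in each coordinate.

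For strict functoriality on 1-cells, the identity functor has identity constraints, so $\eta^{\id}=\eta$ and $\epz^{\id}=\epz$, and hence $\pinv{\id_A}=\id_{\pinv{A}}$. For a composite $GF$, the constraints are $(GF)_0 = G(F_0)\circ G_0$ and $(GF)_2 = G(F_2)\circ G_2$. Expanding $\eta^{GF}$ gives
\[
G_2^{-1}\circ G(F_2^{-1})\circ GF\eta\circ G(F_0)\circ G_0,
\]
which is exactly $(\eta^F)^G$ by functoriality of $G$. The same holds for $\epz$. On morphisms both functors act as $(f,f')\mapsto (GFf,GFf')$, so $\pinv{(GF)} = \pinv{G}\pinv{F}$ on the nose.

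For the 2-cell compatibilities, vertical composition and identity 2-cells are computed componentwise, so they are preserved immediately. The horizontal composite of $\phi$ and $\psi$ has components $\psi_{F'a}\circ G\phi_a$ (or the equivalent order). Applying $\pinv{(-)}$ to it gives pairs whose coordinates are these same components at $a$ and at $a'$, which is precisely the horizontal composite in $\cat$ of $\pinv{\phi}$ and $\pinv{\psi}$.

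I expect the main obstacle to be the well-definedness of $\pinv{\phi}$, namely verifying the map-of-invertible-pairs axioms. The point requiring care is the correct use of inverses of the constraints in the monoidal naturality square. Everything else is bookkeeping.
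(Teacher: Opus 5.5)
Your proposal is correct and follows the same route as the paper. You define $(\phi_a,\phi_{a'})$ as the component of $\pinv{\phi}$, then check identity and composite 1-cells (via the same computation $(\eta^F)^G = \eta^{G\circ F}$) and vertical and horizontal composition of 2-cells componentwise; your derivation of the map-of-invertible-pairs axioms from monoidal naturality, naturality at $\eta$ and $\epz$, and the unit axiom is in fact more explicit than the paper's one-line appeal to naturality of $\phi$.
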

\begin{proof}
  The assignment $F \mapsto \pinv{F}$ is defined in \cref{lem:smf-map-inv}.
  We now define the action on 2-cells and check the axioms for a 2-functor.
  For this purpose, suppose given permutative categories $A$, $B$, and $C$ together with symmetric monoidal functors and monoidal transformations as shown here.
  \begin{equation}\label{eq:pinv-2fun-data}
    \begin{tikzpicture}[x=20ex,y=15ex,vcenter]
      \draw[0cell] 
      (0,0) node (a) {A}
      (a)++(1,0) node (b) {B}
      (b)++(1,0) node (c) {C}
      ;
      \draw[1cell] 
      (a) edge[bend left=60] node (f) {F} (b)
      (a) edge node (f2) {H} (b)
      (a) edge[bend right=60] node['] (f3) {K} (b)
      (b) edge[bend left=30] node {G} (c)
      (b) edge[bend right=30] node['] {L} (c)
      ;
      \draw[2cell] 
      node[between=f and f2 at .5, rotate=-90, 2label={above,\phi}] {\Rightarrow}
      node[between=f2 and f3 at .5, rotate=-90, 2label={above,\psi}] {\Rightarrow}
      node[between=b and c at .5, rotate=-90, 2label={above,\mu}] {\Rightarrow}
      ;
    \end{tikzpicture}
  \end{equation}
  Throughout, suppose that $\ul{a} = (a, a', \eta, \epz) \in \pinv{A}$ is an invertible pair.

  The natural transformation $\pinv{\phi}$ is defined by components
  \begin{equation}\label{eq:def-phi-inv}
    \pinv{\phi}_{\ul{a}} = (\phi_a, \phi_{a'}).
  \end{equation}
  These components satisfy the two axioms of \cref{eq:inv-obj-map} by naturality of $\phi$ at $\eta$ and $\epz$, respectively.
  Naturality of $\pinv{\phi}$ follows from that of $\phi$.

  We now check that these definitions give a 2-functor $\permcat \to \cat$.     \begin{description}
  \item[Identity 2-cells] For the identity monoidal transformation, $\id_F$, the components of $\pinv{(\id_F)}$ are $(\id_{Fa}, \id_{Fa'})$.
    Thus, each component is the identity map of invertible pairs, as required.
  \item[Vertical composition of 2-cells] For a vertical composite, we have the following components at $\ul{a}$; the equalities hold by definition and by componentwise composition (\cref{defn:Ainv}):
    \[
      \pinv{(\psi\circ \phi)}_{\ul{a}}
      =  \bigl( (\psi\circ \phi)_a, (\psi\circ \phi)_{a'} \bigr)
      =  \bigl( \psi_a \circ \phi_a, \psi_{a'} \circ \phi_{a'} \bigr)
      = \bigl( \pinv{\psi} \circ \pinv{\phi} \bigr)_{\ul{a}}.
    \]
  \item[Identity 1-cells] For the identity symmetric monoidal functor, $\id_{A}$, the definitions \cref{eq:smf-map-inv-eta,eq:smf-map-inv-epz} show that $\pinv{(\id_A)} = \id_{(\pinv{A})}$ as required.
  \item[Horizontal composition of 1-cells] To confirm $\pinv{(G \circ F)} = \pinv{G} \circ \pinv{F}$, the only nontrivial verifications are $(\eta^F)^G = \eta^{G \circ F}$ and $(\epz^F)^G = \epz^{G \circ F}$.
    By definition in \cref{eq:smf-map-inv-eta}, $(\eta^F)^G$ is given as follows, with the penultimate equality being that of monoidal and unit constraints for a composition of monoidal functors:
    \begin{align*}
      (\eta^F)^G & = G_2^{-1} \circ G(\eta^{F}) \circ G_0 \\
      & = G_2^{-1} \circ G(F_2^{-1} \circ F\eta \circ F_0) \circ G_0 \\
      & = G_2^{-1} \circ G(F_2^{-1}) \circ (G \circ F)\eta \circ G(F_0) \circ G_0 \\
      & = (G \circ F)_2^{-1} \circ (G \circ F)\eta \circ (G \circ F)_0 \\
      & = \eta^{G \circ F}.
    \end{align*}
    The analogous equality holds for $\epz$ by \cref{eq:smf-map-inv-epz}, and these calculations show that $\pinv{(-)}$ preserves composition of 1-cells.
  \item[Horizontal composition of 2-cells] We verify componentwise at $\ul{a} \in \pinv{A}$:
    \[
      \pinv{(\mu * \phi)}_{\ul{a}}
      = \bigl( \mu_{Ha} \circ G( \phi_a )\,,\, \mu_{Ha'} \circ G( \phi_{a'} )\bigr)
      = \bigl( \pinv{\mu}*\pinv{\phi} \bigr)_{\ul{a}}.
    \]
  \end{description}
  This completes the verification that $\pinv{(-)}$ preserves all identities and composition operations, and is therefore a 2-functor.
\end{proof}

\begin{prop}\label{prop:pi-2nat}
Let $(A, +, 0, \beta)$ be a permutative category. Then there is a functor $\pi \cn \pinv{A} \to A$ defined by
\begin{equation}\label{eq:def-pi}
\pi(a, a', \eta, \epz) = a, \quad \pi(f, f') = f
\end{equation}
that is full on isomorphisms and faithful.
These functors are the components of a 2-natural transformation $\pinv{(-)} \to U$, where $U \cn \permcat \to \cat$ is the underlying category 2-functor.
\end{prop}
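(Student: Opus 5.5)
Our plan has four parts: check that $\pi$ is a functor, prove it is faithful, prove it is full on isomorphisms, and then establish 2-naturality.

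\emph{Functor.} Composition and identities in $\pinv{A}$ are defined componentwise (\cref{defn:Ainv}), and $\pi$ simply projects to the first coordinate. So $\pi(g,g')\circ\pi(f,f') = g\circ f = \pi\bigl((g,g')\circ(f,f')\bigr)$, and $\pi(\id_a,\id_{a'}) = \id_a$.

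\emph{Faithful.} Suppose $(f,f')$ and $(f,g')$ are two maps of invertible pairs $\ul{a}\to\ul{b}$ with the same first coordinate. By \cref{lem:Ainv-gpd}, $f$ is an isomorphism $a \iso b$. By the uniqueness clause of \cref{lem:iso-to-mapofinv}, there is exactly one $f^\dagger$ making $(f,f^\dagger)$ a map of invertible pairs, so $f' = f^\dagger = g'$.

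\emph{Full on isomorphisms.} Let $f\cn a\iso b$ be an isomorphism between base objects of $\ul{a}$ and $\ul{b}$. The existence clause of \cref{lem:iso-to-mapofinv} provides $(f,f^\dagger)\cn\ul{a}\to\ul{b}$ with $\pi(f,f^\dagger) = f$. Since $\pinv{A}$ is a groupoid by \cref{lem:Ainv-gpd}, this map is automatically an isomorphism in $\pinv{A}$.

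\emph{2-naturality.} Let $F\cn A\to B$ be a symmetric monoidal functor. By \cref{lem:smf-map-inv}, $\pinv{F}$ sends $\ul{a}$ to $F\ul{a}$, whose base object is $Fa$, and sends $(f,f')$ to $(Ff,Ff')$. Hence $\pi_B\circ\pinv{F} = UF\circ\pi_A$ strictly, on both objects and morphisms. For a monoidal transformation $\phi\cn F\To H$, the whiskering $\pi_B * \pinv{\phi}$ has components $\pi_B(\phi_a,\phi_{a'}) = \phi_a$ by \cref{eq:def-phi-inv}. These agree with the components of $U\phi * \pi_A$, so the 2-cell condition holds.

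None of these steps is a real obstacle. The only substantive input is the uniqueness argument in \cref{lem:iso-to-mapofinv}, which we use to get faithfulness, together with \cref{lem:Ainv-gpd}, which guarantees that every map of invertible pairs has an invertible first component. Because of the latter, the uniqueness statement (which is phrased for isomorphisms $f$) applies to every morphism of $\pinv{A}$.
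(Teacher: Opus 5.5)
Your proposal is correct and follows the paper's proof essentially step for step. Functoriality comes from componentwise composition, faithfulness and fullness on isomorphisms come from the uniqueness and existence parts of \cref{lem:iso-to-mapofinv} together with \cref{lem:Ainv-gpd}, and 2-naturality is checked directly from the definitions of $\pinv{F}$ and $\pinv{\phi}$.
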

\begin{proof}
  Since composition and identities for maps of invertible pairs are defined componentwise (\cref{defn:Ainv}), functoriality of $\pi$ is immediate from the definition in \cref{eq:def-pi}.
  By \cref{lem:Ainv-gpd}, every map of invertible pairs has components that are isomorphisms.
  \cref{lem:iso-to-mapofinv} then shows that $\pi$ is full on isomorphisms (by the existence part of \cref{lem:iso-to-mapofinv}) and faithful (by the uniqueness part of \cref{lem:iso-to-mapofinv}).
  The final claim, that the various functors $\pi_A \cn \pinv{A} \to A$ are 2-natural in $A$ with respect to symmetric monoidal functors $A \to B$, is equivalent to the two equalities
  \begin{equation}\label{eq:nat-pi}
    UF \circ \pi_A = \pi_B \circ \pinv{F}
    \andspace
    U\al * \pi_A = \pi_B * \pinv{\al}
  \end{equation}
  for a symmetric monoidal functor $F \cn A \to B$ and monoidal natural transformation $\al \cn F \to G$, respectively.
  The equalities \cref{eq:nat-pi} are both immediate from the definitions of: $\pi$ in  \cref{eq:def-pi}, $\pinv{F}$ in \cref{lem:smf-map-inv}~\cref{it:smf-i}, and $\pinv{\phi}$ in \cref{eq:def-phi-inv}.
  This observation finishes the proof.
\end{proof}

\section{Calculations with invertible pairs}
\label{sec:calcs}

This section will handle key computational results concerning invertible pairs in a permutative category.
We highlight three such calculations.
The first is \cref{lem:even-perms}, which shows that even permutations of an object of the form $k \cdot a$, for $a$ invertible, are necessarily the identity.
The other key results in this section concern three automorphisms of the unit object that we call the Figure Eight, Figure C, and Figure H in \cref{defn:fet-figC-figH}.
Our second calculation, \cref{lem:fet-figC-figH}, shows that these three automorphisms are always equal.
Our third calculation shows that the Figure Eight is its own inverse in \cref{lem:64=1}.

We begin by proving the following Eckmann-Hilton equalities, as they will be useful throughout.
\begin{lem}[Eckmann-Hilton]\label{lem:EH}
  Let $(A, +, 0, \beta)$ be a permutative category.
  \begin{enumerate}
  \item\label{it:EH1} If $f \cn 0 \to 0$ is any endomorphism of the unit, and $g \cn a \to b$ is any morphism, then
    \[
      g \circ (f + \id_a) = f+g = (f + \id_b) \circ g.
    \]
    In the special case when $a=0$, then $g \circ f = (f + \id_b) \circ g$, and in the special case when $b=0$, then $g \circ (f + \id_a) = f \circ g$.
  \item\label{it:EH2} If $f, g \cn 0 \to 0$ are two endomorphisms of the unit object, then $f \circ g = f+ g$.
    Consequently, $A(0,0)$ is a commutative monoid under composition.
  \end{enumerate}
\end{lem}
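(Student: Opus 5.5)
The plan is to exploit the strict unit $0 + x = x = x + 0$ together with functoriality (the interchange law) of the monoidal sum $+ \cn A \times A \to A$. No symmetry is needed for \cref{it:EH1}. For \cref{it:EH2} we only need strictness and functoriality of $+$ as well; symmetry is not needed for the commutativity conclusion either.

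For \cref{it:EH1}, let $f \cn 0 \to 0$ and $g \cn a \to b$. Since $0 + a = a$ and $0 + b = b$ strictly, the morphism $f + g \cn 0 + a \to 0 + b$ is a morphism $a \to b$. Interchange gives
\[
  f + g = (\id_0 \circ f) + (g \circ \id_a) = (\id_0 + g) \circ (f + \id_a) = g \circ (f + \id_a),
\]
using $\id_0 + g = g$ by strict unitality. Likewise,
\[
  f + g = (f \circ \id_0) + (\id_b \circ g) = (f + \id_b) \circ (\id_0 + g) = (f + \id_b) \circ g.
\]
The special cases follow by substituting $a = 0$ or $b = 0$ and using $f + \id_0 = f$.

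For \cref{it:EH2}, take $a = b = 0$ and $g \cn 0 \to 0$ in \cref{it:EH1}. This gives $g \circ f = f + g = f \circ g$, where the outer equalities use $f + \id_0 = f$. To obtain the stated form $f \circ g = f + g$, apply the same interchange argument with the roles of the factors arranged as in the second display: $f + g = (f + \id_0) \circ (\id_0 + g) = f \circ g$. Combining the two computations shows that $f \circ g = g \circ f$, so $A(0,0)$ is a commutative monoid under composition.

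The only point needing care is the bookkeeping of which identities are absorbed by strict unitality, in particular $\id_0 + g = g$ as morphisms with matching sources and targets. This is immediate because $(A, +, 0)$ is strict monoidal, so $0 + \qmarg$ is the identity functor. I expect no real obstacle.
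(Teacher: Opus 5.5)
Your proposal is correct and follows essentially the same route as the paper: both absorb identities via the strict unit ($g = \id_0 + g$), apply the interchange law to get $g \circ (f + \id_a) = f + g = (f + \id_b) \circ g$, and obtain part (2) by specializing to $a = b = 0$. Your extra interchange computation in part (2) is redundant, since the specialization of part (1) already gives $g \circ f = f + g = f \circ g$.
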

\begin{proof}
  Since $A$ is a permutative category, for any morphism $g \cn a \to b$, we have the equality $g = g + \id_0 = \id_0 + g$ using the strictness of the monoidal unit and naturality of the equalities $a = a+0 = 0+a$.
  The displayed equalities in \cref{it:EH1} follow from the functoriality of $+$ as shown below:
  \begin{equation}\label{eq:EH}
    \begin{array}{rcl}
      g \circ (f + \id_a) &= & (\id_0 + g) \circ (f + \id_a) \\
                          & = & f+g \\
                          & = & (f + \id_b) \circ (\id_0 + g) \\
                          & = & (f + \id_b) \circ g.
    \end{array}
  \end{equation}
  The stated special cases with $a = 0$ or $b=0$ hold by strictness of the monoidal unit, and \cref{it:EH2} follows by combining the two special cases.
\end{proof}

\begin{cor}\label{cor:homs-comm-monoid}
  Let $(A, +, 0, \beta)$ be a permutative category. If $(a, a', \eta, \epz)$ is an invertible pair in $A$, then $A(0,0) \cong A(a,a)$ as monoids under composition, and therefore $A(a,a)$ is a commutative monoid.
\end{cor}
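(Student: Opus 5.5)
The plan is to transport endomorphisms of the unit to endomorphisms of $a$ by translation, and then import commutativity from \cref{lem:EH}. By \cref{rmk:not-smf}, for any object $z$ the function
\[
  z+\qmarg \cn A(x,x) \to A(z+x,z+x)
\]
is a monoid homomorphism with respect to composition. It is an isomorphism whenever $z$ is invertible, because then $z+\qmarg$ is an equivalence of categories by \cref{rmk:inverse-equivs}, and an equivalence is fully faithful. The base object $a$ of the invertible pair is invertible, so we take $z=a$ and $x=0$. Since $a+0=a$ strictly in a permutative category, this gives a monoid isomorphism
\[
  a+\qmarg \cn A(0,0) \to A(a,a), \qquad f \mapsto f+\id_a.
\]

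To keep the argument self-contained, I would also note why this map is bijective directly. Fully faithfulness of $a+\qmarg$ follows from the adjoint equivalence with $a'+\qmarg$, whose unit and counit are $\eta+\qmarg$ and $\epz+\qmarg$. Alternatively, one can write down an explicit inverse. A morphism $g\cn a\to a$ is sent to the composite
\[
  0 \fto{\eta} a'+a \fto{a'+g} a'+a \fto{\eta^{-1}} 0.
\]
Checking that this inverts $f\mapsto f+\id_a$ is a short computation. One direction uses \cref{lem:EH}~\cref{it:EH1}. The other direction uses the triangle identities \cref{eq:aa'-triang} together with naturality. This check is the only step with any content, and I do not expect it to be an obstacle, because the equivalence statement of \cref{rmk:inverse-equivs} already covers it.

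Finally, \cref{lem:EH}~\cref{it:EH2} shows that $A(0,0)$ is a commutative monoid under composition. A monoid isomorphic to a commutative monoid is itself commutative, so $A(a,a)$ is commutative. This completes the proof.
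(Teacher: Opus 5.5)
Your proposal is correct and matches the paper's proof. The paper likewise combines three ingredients: the composition-preserving map $a+\qmarg\cn A(0,0)\to A(a,a)$ from \cref{rmk:not-smf}, its bijectivity because $a+\qmarg$ is an equivalence (\cref{rmk:inverse-equivs}), and commutativity of $A(0,0)$ from \cref{lem:EH}. Your explicit inverse $g\mapsto \eta^{-1}\circ(a'+g)\circ\eta$ is a valid optional elaboration, and one small slip is harmless: strictly, $a+\qmarg$ sends $f$ to $\id_a+f$ rather than $f+\id_a$, but these agree by naturality of $\beta$ since $\beta_{0,a}=\id_a$.
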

\begin{proof}
  Recall from \cref{rmk:not-smf} that the translation functor $a+\qmarg$ induces a homomorphism of monoids with respect to composition
  \begin{equation}\label{eq:aqmark-hom}
    a+\qmarg\cn A(0,0) \to A(a,a).
  \end{equation}
  The monoid $A(0,0)$ is commutative by the Eckmann-Hilton \cref{lem:EH}.
  Since $a+\qmarg$ is an equivalence (\cref{rmk:inverse-equivs}), the homomorphism \cref{eq:aqmark-hom} is an isomorphism and the result follows.
\end{proof}

Recall from \cref{notn:braces} that $\SS G$ is the free permutative category on a set $G$, and $\S = \SS\{x\}$ is free on a single generator $x$.
\begin{notn}[Characteristic functors]\label{notn:chi}
  For each permutative category $A$, and each object $a \in A$, there is a unique strict symmetric monoidal functor
  \begin{equation}\label{eq:chia}
    \begin{aligned}
      \chi_a \cn \S & \to A\\
      x & \mapsto a
    \end{aligned}
  \end{equation}
  determined by the free-forgetful adjunction \cref{eq:SS-free-forget} of the 2-monad $\SS$.
  We call $\chi_a$ the \emph{characteristic functor} of $a$.
\end{notn}

Recall from \cref{thm:permcat-coherence-one-obj} that, for a natural number $n$ and permutation $\sigma \in \Sigma_n$, there is an automorphism $[\sigma, 1] \cn n \cdot x \to n \cdot x$ in $\S$ with underlying permutation $\sigma$.
Then given an object $a$ of a permutative category $A$, the strict symmetric monoidal functor $\chi_a \cn \S \to A$ \cref{eq:chia} determines a morphism $\chi_a(\si)\cn n\cdot a \to n\cdot a$.
\begin{prop}\label{lem:even-perms}
  Let $(A, +, 0, \beta)$ be a permutative category with an invertible pair $(a, a', \eta, \epz)$, and let $n$ be a natural number.
  If $\si \in \Sigma_n$ is an even permutation, then $\chi_a(\si) = \id_{n\cdot a}$.
\end{prop}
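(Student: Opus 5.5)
Since $\chi_a$ is a strict symmetric monoidal functor, the composite
\[
  \Sigma_n \to \S(n\cdot x, n\cdot x) \fto{\chi_a} A(n\cdot a, n\cdot a)
\]
is a homomorphism of monoids by \cref{thm:permcat-coherence-one-obj}. The alternating group $A_n$ is generated by $3$-cycles, and for $n\ge 3$ it is the commutator subgroup of $\Sigma_n$. So the plan is to show that the image of this homomorphism is an abelian group. Then every commutator maps to the identity, and hence every even permutation does as well. The cases $n\le 2$ are trivial, because the only even permutation is then the identity.

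The key step is abelianness of $A(n\cdot a, n\cdot a)$. The object $n\cdot a$ is invertible: an invertible pair with base object $n\cdot a$ is built from $n\cdot a'$, by nesting copies of $\eta$ and $\epz$ in the usual way. Alternatively, left translation by $n\cdot a$ is a composite of $n$ equivalences $a+\qmarg$, hence an equivalence, and \cref{lem:weak-vs-structured-inv} supplies an invertible pair with base $n\cdot a$. Then \cref{cor:homs-comm-monoid} shows that $A(n\cdot a, n\cdot a)\cong A(0,0)$ is a commutative monoid. Its invertible elements, which include the image of $\Sigma_n$, therefore form an abelian group. Consequently the homomorphism $\Sigma_n \to A(n\cdot a,n\cdot a)$ factors through the abelianization $\Sigma_n/A_n \cong \mathbb{Z}/2$ (for $n\ge 2$). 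This gives $\chi_a(\si)=\id$ for all even $\si$.

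The only point requiring care is the group-theoretic fact that $A_n=[\Sigma_n,\Sigma_n]$, which I will cite as standard. A direct check also works: every $3$-cycle $(i\,j\,k)$ equals the commutator $(i\,j)(i\,k)(i\,j)^{-1}(i\,k)^{-1}$, up to orientation, and $3$-cycles generate $A_n$. The invertibility of $n\cdot a$ is the main substantive step. I expect no obstacle there, beyond confirming that a composite of equivalences $a+\qmarg$ is the translation $n\cdot a+\qmarg$, which holds by strict associativity.
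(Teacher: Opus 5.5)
Your proposal is correct and follows essentially the same route as the paper. Both arguments regard $\chi_a$ as a group homomorphism $\Sigma_n \to A(n\cdot a, n\cdot a)$ whose target is abelian by \cref{cor:homs-comm-monoid}, so it factors through the abelianization and kills the even permutations; your explicit check that $n\cdot a$ is itself invertible (with inverse $n\cdot a'$), which is needed to apply that corollary to $n\cdot a$ rather than to $a$, is a detail the paper leaves implicit.
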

\begin{proof}
  For an object $x \in A$, write $\Aut(x)$ for the group of automorphisms of $x$ in $A$.
  By functoriality of $\chi_a$ and the fact that $\S$ is a groupoid, the function on morphism sets
  \[
    \chi_a\cn \S(n \cdot x,n \cdot x) \to A(n\cdot a,n\cdot a)
  \]
  induces a group homomorphism (that we denote with the same name)
 \[
    \Sigma_n \cong \S(n \cdot x,n \cdot x) \to \Aut(n\cdot a),
  \] 
  where the first isomorphism is that of \cref{eq:Snn-Sigman}. Since the target is abelian by \cref{cor:homs-comm-monoid}, this composite must factor through $\Sigma_n^{\mathrm{ab}}$, the abelianization, and so the kernel must contain all even permutations.
  Therefore, $\chi_a$ sends each even permutation in $\Sigma_n$ to the identity morphism of $\Aut(n\cdot a)$.
\end{proof}

\begin{rmk}\label{rmk:even-perms}
  \cref{lem:even-perms} conveys the core idea that coherence for an invertible object $a$ is governed by the \emph{sign} of underlying permutations.
  Although not used directly in our further calculations below, it is fundamental.
  Note, however, that the statement of \cref{lem:even-perms} concerns only those morphisms of $A$ that are in the image of $\chi_a \cn \S \to A$.
  In order to extend \cref{lem:even-perms} to morphisms involving both $a$ and its inverse $a'$, one needs a full coherence result such as \cref{thm:free-inv-coherence}.
  Beyond the further calculations in this section, \cref{example:addl,rmk:additional-comps} contain relevant additional discussion.
\end{rmk}

Several of the calculations below use the following observation about naturality of the symmetry and strictness of the monoidal unit in a permutative category.
\begin{rmk}\label{rmk:nat-beta-0}
  Suppose $A$ is a permutative category with objects $x,y,z\in A$ and morphisms $f\cn 0 \to x$ and $g\cn y \to 0$.
  Strictness of the monoidal unit, naturality of $\beta$, and the identities $\beta_{0,z} = \id_z = \beta_{z,0}$, for $z \in A$, imply that the following diagrams commute.
  \begin{equation}\label{eq:nat-beta0}
    \begin{tikzpicture}[x=15ex,y=8ex, vcenter]
      \draw[0cell] 
      (0,0) node (xz) {x + z}
      (xz)++(0,-1) node (zx) {z + x}
      (xz)++(-1,-.5) node (z) {z}
      ;
      \draw[1cell] 
      (xz) edge node {\beta_{x,z}} (zx)
      (z) edge node[pos=.7] {f + \id_z} (xz)
      (z) edge['] node[pos=.7] {\id_z + f} (zx)
      ;
      \draw[0cell] 
      (1,0) node (yz) {y + z}
      (yz)++(0,-1) node (zy) {z + y}
      (yz)++(1,-.5) node (z) {z}
      ;
      \draw[1cell] 
      (yz) edge['] node {\beta_{y,z}} (zy)
      (yz) edge node[pos=.3] {g + \id_z} (z)
      (zy) edge['] node[pos=.3] {\id_z + g} (z)
      ;
    \end{tikzpicture}
    \vspace{-1pc} 
  \end{equation}
  \ 
\end{rmk}

\begin{defn}\label{defn:fet-figC-figH}
  Let $(A, +, 0, \beta)$ be a permutative category, and let $(a,a', \eta, \epz)$ be an invertible pair in $A$.
  We introduce notation for the following standard composites.
  In each case, the name and notation are mnemonics for the corresponding string diagram; we leave drawing these as an exercise for the reader.
  \begin{description}
  \item[Figure Eight]
  \begin{equation}\label{notn:8}
    \begin{tikzpicture}[x=11ex,y=10ex,vcenter]
      \draw[0cell] 
      (0,0) node (a) {0}
      (a)++(.75,-.5) node (b) {a' + a}
      (b)++(1.5,0) node (c) {a + a'}
      (c)++(.75,.5) node (d) {0}
      ;
      \draw[1cell] 
      (a) edge['] node {\eta} (b)
      (b) edge node {\beta_{a',a}} (c)
      (c) edge['] node {\epz} (d)
      (a) edge[bend left=12] node {\fet_a} (d)
      ;
      \draw (-.7,0) node {\ } (3.7,0) node {\ };
    \end{tikzpicture}
  \end{equation}
  \item[Figure C]
  \begin{equation}\label{notn:figC}
    \begin{tikzpicture}[x=11ex,y=10ex,vcenter]
      \draw[0cell] 
      (0,0) node (a) {0}
      (a)++(0,-.75) node (b) {a' + a}
      (b)++(.25,-.75) node (b') {a' + a' + a + a}
      (b')++(2.5,0) node (c') {a' + a' + a + a}
      (c')++(.25,.75) node (c) {a' + a}
      (c)++(0,.75) node (d) {0}
      ;
      \draw[1cell] 
      (a) edge['] node {\eta} (b)
      (b) edge[',pos=.3] node {1 + \eta + 1} (b')
      (b') edge node {1 + 1 + \beta_{a,a}} (c')
      (c') edge[',pos=.7] node {1 + \eta^\inv + 1} (c)
      (c) edge['] node {\eta^\inv} (d)
      (a) edge[bend left=12] node {\figC_a} (d)
      ;
      \draw (-.7,0) node {\ } (3.7,0) node {\ };
    \end{tikzpicture}
  \end{equation}
  \item[Figure H]
  \begin{equation}\label{notn:figH}
    \begin{tikzpicture}[x=11ex,y=10ex,vcenter]
      \draw[0cell] 
      (0,0) node (a) {0}
      (a)++(.25,-.75) node (b) {a' + a + a + a'}
      (b)++(2.5,0) node (c) {a' + a + a + a'}
      (c)++(.25,.75) node (d) {0}
      ;
      \draw[1cell] 
      (a) edge[',pos=.3] node {\eta + \epz^\inv} (b)
      (b) edge node {1 + \beta_{a,a} + 1} (c)
      (c) edge[',pos=.7] node {\eta^\inv + \epz} (d)
      (a) edge[bend left=12] node {\figH_a} (d)
      ;
      \draw (-.7,0) node {\ } (3.7,0) node {\ };
    \end{tikzpicture}
    \vspace{-1pc} 
  \end{equation}
\end{description}
\ 
\end{defn}

\begin{rmk}\label{rmk:8-trace}
  The Figure Eight, $\fet_a$ \cref{notn:8}, for an invertible object $a$ is also known as the \term{trace} of the identity $\id_a$ or the \term{Euler characteristic} of $a$.
  See, e.g., \cite{JSV1996Traced,Dug2014Coherence,PS2014Traces} for further discussion of traces in symmetric monoidal categories, including key examples that justify the terminology.
\end{rmk}

\begin{convention}\label{conv:ol-pm-cat}
  In this section, we use the following notational conventions where they simplify or clarify calculations.
  Each use of these conventions will include a reference back to this explanation.
  \begin{enumerate}
  \item\label{conv-it:ol} In diagrams where space is limited, or in calculations relevant to such diagrams, the inverse of an isomorphism $f\cn x \iso y$ may also be denoted with a bar, $\ol{f} = f^\inv$.
  \item\label{conv-it:pm} For diagrams that involve a single invertible pair $\ul{a} = (a, a', \eta, \epz)$, we may write $+$ and $-$ for the objects $a$ and $a'$, respectively.
    In such cases, we always use concatenation for the monoidal sum, as described in \cref{rmk:concat}.
  \item\label{conv-it:perm} For monoidal sums of more than two objects, we sometimes denote symmetry isomorphisms by their corresponding permutations of summands, numbered from left to right.
  \end{enumerate}
  As an example, for an invertible pair $\ul{a} = (a,a',\eta,\epz)$, the composite
  \[
    a' + a + a' + a \fto{\id_{a'} + \beta_{a+a',a}} a' + a + a + a'
    \fto{\eta^\inv + \id_a + \id_{a'}} a + a'
  \]
  could be denoted
    \[
    -+-+ \fto{(2 \ 3 \ 4)} -++- \fto{\oleta 1 1} +-.
  \]
\end{convention}

The following calculation shows that the three automorphisms of \cref{defn:fet-figC-figH} are equal.
\begin{lem}\label{lem:fet-figC-figH}
  Let $(A, +, 0, \beta)$ be a permutative category, and let $(a,a', \eta, \epz)$ be an invertible pair in $A$.
  The three automorphisms $\fet_a$ \cref{notn:8}, $\figC_a$ \cref{notn:figC}, and $\figH_a$ \cref{notn:figH} are equal as automorphisms of $0$.
\end{lem}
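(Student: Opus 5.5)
The plan is to prove the two equalities $\figH_a = \fet_a$ and $\figC_a = \figH_a$ separately. Each is a diagram chase that uses only three ingredients: naturality of $\beta$ together with the symmetry axioms \cref{eq:symm-axioms}, the triangle identities \cref{eq:aa'-triang}, and the Eckmann-Hilton \cref{lem:EH}. The last of these lets us move unit automorphisms freely and conjugate by $\eta$ or $\epz$ without changing an endomorphism of $0$. Throughout I will use \cref{conv:ol-pm-cat}, so for example $\figH_a$ is the composite $0 \to -++- \to -++- \to 0$.

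\emph{Step 1: $\figH_a = \fet_a$.} Write $\eta^\inv + \epz$ as $\eta^\inv \circ (1_{a'}+1_a+\epz)$, so that $\epz$ is applied first to the last two summands $a+a'$. The morphism that $\epz$ meets is $1_{a'}+\beta_{a,a}+1_{a'}$. By the second symmetry axiom,
\[
\beta_{a,a}+1_{a'} = (1_a+\beta_{a',a})\circ\beta_{a,a+a'}.
\]
Naturality of $\beta_{a,-}$ at $\epz$, in the form of \cref{rmk:nat-beta-0}, gives
\[
(1_a+\epz)\circ\beta_{a,a+a'}^{\,\prime} = \epz + 1_a
\]
after the appropriate reindexing. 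This converts the crossing of the two copies of $a$, followed by the cap $\epz$, into a cap of the form $\epz\circ\beta_{a',a}$ on adjacent summands.

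Next, the cup $\epz^\inv$ on the right of $\figH_a$ and the cap $\epz$ form a zigzag. Using $\epz+1_a = 1_a+\eta^\inv$, which follows from the triangle identity \cref{eq:aa'-triang}, and its mirror for $a'$, this zigzag straightens. What remains is $0 \xrightarrow{\eta} a'+a \xrightarrow{\beta_{a',a}} a+a' \xrightarrow{\epz} 0$, possibly sandwiched between $\eta$ and $\eta^\inv$ on unused strands. By \cref{lem:EH}, any such sandwiching of an endomorphism of $0$ can be removed, and the result is exactly $\fet_a$.

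\emph{Step 2: $\figC_a = \figH_a$.} In $\figC_a$ the inner cup $1+\eta+1$ inserts $a'+a$ between $a'$ and $a$. I will trade this $\eta$ for an $\epz^\inv$ using the triangle identities, which let us exchange $a+\eta$ against $(\epz+a)^\inv$ and $\eta+a'$ against $(a'+\epz)^\inv$. This rewrites $\figC_a$ as a composite of the shape $\eta+\epz^\inv$, then a crossing of the two copies of $a$, then $\eta^\inv+\epz$. To match the summand order of $\figH_a$, it should suffice to apply naturality of $\beta$ with respect to $\eta$ and $\epz$ (\cref{rmk:nat-beta-0}), which slides the crossing $\beta_{a,a}$ into the middle position, followed by Eckmann-Hilton to reorder the unit factors.

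The main obstacle is purely bookkeeping. At each stage one must pick the correct instance of the hexagon-type axiom, since the second axiom of \cref{eq:symm-axioms} only handles splitting the second variable and its inverse form is needed for the first. One must also keep track of which triangle identity cancels which zigzag. I would first draw the string diagrams for $\fet_a$, $\figC_a$, and $\figH_a$ to see each isotopy, then translate each isotopy move into one of the named equalities. If the direct route in Step 2 gets tangled, the fallback is to prove $\figC_a = \fet_a$ directly, mirroring Step 1 with the roles of $\eta$ and $\epz$ interchanged via \cref{lem:inv-switch}.
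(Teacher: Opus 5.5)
Your proposal is correct and is essentially the paper's argument. The paper proves the lemma by a single diagram chase: its right half corresponds to your Step~1 ($\figH_a=\fet_a$), and its left half is your fallback $\figC_a=\fet_a$, using exactly your ingredients---naturality of $\beta$ against $\eta$ and $\epz$ as in \cref{rmk:nat-beta-0}, equality of composite permutations, functoriality with invertibility of $\eta$, and the triangle identities for $(\eta,\epz)$ and $(\olepz,\oleta)$.

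Your direct route $\figC_a=\figH_a$ in Step~2 also works. Trading the inner $\eta$ and $\eta^\inv$ via $\eta+\id_{a'}=\id_{a'}+\epz^\inv$ and \cref{rmk:nat-beta-0} turns $\figC_a$ into $(\eta^\inv+\epz)\circ\pi\circ(\eta+\epz^\inv)$ with $\pi$ a composite of symmetries. What finishes it, though, is Mac~Lane coherence, not Eckmann--Hilton: $\pi$ has underlying permutation transposing the two middle summands of $a'+a+a+a'$, so it equals $\id_{a'}+\beta_{a,a}+\id_{a'}$.
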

\begin{proof}
  We obtain the desired equalities by commutativity of the following diagram, which uses \cref{conv:ol-pm-cat} and is explained below.
  \begin{equation}\label{eq:fet-figH-diagram}
    \begin{tikzpicture}[x=14ex,y=9ex,vcenter]
      \def\amin{-}
      \def\aplu{+}
      \draw[0cell=.8] 
      (0,0) node (a) {0}
      (a)++(0,-1) node (b) {\amin \aplu}
      (b)++(0,-1) node (c) {\aplu \amin}
      (c)++(0,-1) node (d) {0}
      (a)++(-.7,-.4) node (ax) {\amin \aplu}
      (b)++(-.7,0) node (bx) {\amin \aplu \amin \aplu}
      (bx)++(-1,0) node (by) {\amin \amin \aplu \aplu}
      (by)++(-.7,-.5) node (L) {\amin \amin \aplu \aplu}
      (c)++(-.7,0) node (cx) {\amin \aplu \aplu \amin}
      (cx)++(-1,0) node (cy) {\amin \aplu \amin \aplu}
      (d)++(-.7,.4) node (dx) {\amin \aplu}
      (L)++(0,-1.1) node (dy) {\amin \aplu}
      (b)++(.8,0) node (bz) {\amin \aplu \aplu \amin}
      (c)++(.8,0) node (cz) {\aplu \amin \aplu \amin}
      (cz)++(.8,0) node (R) {\amin \aplu \aplu \amin}
      (d)++(.8,.5) node (dz) {\aplu \amin}
      ;
      \draw[1cell=.7] 
      (a) edge node {\eta} (b)
      (b) edge node {\beta} (c)
      (c) edge node {\epz} (d)
      (a) edge['] node[pos=.6] {\eta} (ax)
      (ax) edge[',bend right] node {\oleta} (a)
      (ax) edge['] node {1 \eta 1} (by)
      (by) edge['] node {(3\; 4)} (L)
      (L) edge['] node {1 \oleta 1} (dy)
      (dy) edge['] node {1} (dx)
      (dx) edge node[pos=.4] {\oleta} (d)
      (by) edge['] node {(2\; 3\; 4)} (bx)
      (bx) edge['] node {\oleta 1 1} (b)
      (cy) edge node {(2\; 3\; 4)} (cx)
      (cx) edge['] node {\oleta 1 1} (c)
      (cy) edge[', bend right=10] node {1 1 \oleta} (dy)
      (dy) edge[', bend right=10] node[pos=.7] {1 1 \eta} (cy)
      (cy) edge['] node {1 \epz 1} (dx)
      (L) edge node[pos=.3] {(2\; 3\; 4)} (cy)
      (by) edge node {(2\; 3)} (cy)
      (ax) edge node {1 1 \eta} (bx)
      (bx) edge node {(3\; 4)} (cx)
      (cx) edge node {1 1 \epz} (dx)
      (a) edge node {\eta \olepz} (bz)
      (bz) edge node {(2\; 3)} (R)
      (R) edge node {\oleta 1 1} (dz)
      (dz) edge['] node {\epz} (d)
      (b) edge['] node {1 1 \olepz} (bz)
      (c) edge node {1 1 \olepz} (cz)
      (cz) edge node {(1\; 3\; 2)} (R)
      (bz) edge['] node {(1\; 2)} (cz)
      (cz) edge['] node {1 \oleta 1} (dz)
      (c) edge[', bend right=10] node {1} (dz)
      ;
      \draw[1cell=.7, rounded corners]
      (a) -- ++(-2.8,0) -- node[',pos=.3] {\figC} ++(0,-3) -- (d)
      ;
      \draw[1cell=.7, rounded corners]
      (a) -- ++(2,0) -- node[pos=.3] {\figH} ++(0,-3) -- (d)
      ;
      \draw[0cell=.7]
      (bx)++(135:.3) node {\textc{n}}
      (cx)++(225:.3) node {\textc{n}}
      (L)++(-65:.4) node {\textc{n}}
      (cz)++(-45:.25) node {\textc{n}}
      ;
      \draw[0cell=.7]
      (bx)++(225:.707) node {\textc{p}}
      (by)++(245:.38) node {\textc{p}}
      (cz)++(60:.45) node {\textc{p}}
      ;
    \end{tikzpicture}
  \end{equation}
  In the above diagram, the left composite is $\figC_a$, the right composite is $\figH_a$, and the vertical composite through the middle is $\fet_a$.
  The regions labeled {\color{cola}[\textsc{n}]} commute by naturality of $\beta$ as in \cref{rmk:nat-beta-0}.
  The regions labeled {\color{cola}[\textsc{p}]} commute by equality of composite permutations.
  The remaining regions commute by functoriality of the monoidal sum; invertibility of $\eta$; and triangle identities for $(\eta,\epz)$ (at left) and $(\olepz,\oleta)$ (at right).
\end{proof}

\begin{cor}[2-Torsion of $\fet$]\label{lem:64=1}
  Let $(A, +, 0, \beta)$ be a permutative category, and let $(a,a', \eta, \epz)$ be an invertible pair in $A$.
  Then $\fet_a \circ \fet_a = \id_0$. 
\end{cor}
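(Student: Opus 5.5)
The plan is to reduce the claim to the Figure C using \cref{lem:fet-figC-figH}, which gives $\fet_a = \figC_a$ as automorphisms of $0$. It then suffices to show $\figC_a \circ \figC_a = \id_0$. The Figure C is convenient here because it is a conjugate of the single symmetry $\id_{a'+a'} + \beta_{a,a}$, and $\beta_{a,a}$ squares to the identity by the first symmetry axiom in \cref{eq:symm-axioms}.

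Concretely, write $\phi$ for the composite isomorphism
\[
  0 \fto{\eta} a'+a \fto{1+\eta+1} a'+a'+a+a .
\]
Reading off \cref{notn:figC}, we have
\[
  \figC_a = \phi^{-1} \circ \bigl(\id_{a'+a'} + \beta_{a,a}\bigr) \circ \phi ,
\]
since $\phi^{-1} = \eta^{-1} \circ (1+\eta^{-1}+1)$. Composing $\figC_a$ with itself, the inner factors $\phi \circ \phi^{-1}$ cancel. This cancellation is just $\eta\circ\eta^{-1} = \id$, followed by $(1+\eta+1)\circ(1+\eta^{-1}+1) = \id$, which holds by functoriality of $+$. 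What remains is
\[
  \phi^{-1} \circ \bigl(\id_{a'+a'} + \beta_{a,a}\circ\beta_{a,a}\bigr) \circ \phi .
\]
By \cref{eq:symm-axioms} with $x=y=a$, the middle factor is the identity, so the whole composite is $\phi^{-1}\circ\phi = \id_0$. Hence
\[
  \fet_a \circ \fet_a = \figC_a \circ \figC_a = \id_0 .
\]

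There is no real obstacle once \cref{lem:fet-figC-figH} is available: all the substantive work lies in identifying $\fet_a$ with $\figC_a$, which is a conjugate of an involution. The only care needed is to confirm that the two inverse steps in \cref{notn:figC} are exactly the inverses of the first two steps, so that $\figC_a$ really is the conjugate $\phi^{-1}\circ(\id+\beta_{a,a})\circ\phi$; this is immediate from the definition.

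By \cref{lem:EH}, the conclusion can equivalently be stated as $\fet_a + \fet_a = \id_0$.
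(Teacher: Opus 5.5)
Your proposal is correct and matches the paper's proof: the paper likewise observes that $\figC_a \circ \figC_a = \id_0$ (and $\figH_a \circ \figH_a = \id_0$) is immediate from the formulas in \cref{defn:fet-figC-figH}, then transfers this to $\fet_a$ via \cref{lem:fet-figC-figH}. Your conjugation argument, which uses $\beta_{a,a}\circ\beta_{a,a}=\id$, just spells out that ``immediate'' step explicitly.
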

\begin{proof}
  It is immediate from the formulas in \cref{defn:fet-figC-figH} that
  \[
    \figC_a \circ \figC_a = \id_0 = \figH_a \circ \figH_a.
  \]
  Therefore, the result holds for $\fet_a$ by \cref{lem:fet-figC-figH}.
\end{proof}

\begin{lem}\label{lem:8-invariant}
  Let $(A, +, 0, \beta)$ be a permutative category, and $\ul{a}=(a, a', \eta, \epz)$ be an invertible pair in $A$.
  The morphism $\mathtt{8}_a$ depends only on the isomorphism class of $a$ as an object of $A$.
\end{lem}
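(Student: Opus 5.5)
We need to show: given invertible pairs $\ul{a}=(a,a',\eta,\epz)$ and $\ul{b}=(b,b',\delta,\gamma)$ with an isomorphism $f\cn a\iso b$, we have $\fet_a=\fet_b$. In particular this covers two different invertible pairs with the same base object, taking $b=a$ and $f=\id_a$. So the statement says that $\fet$ depends neither on the choice of inverse data nor on the representative of the isomorphism class.

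The first step uses \cref{lem:iso-to-mapofinv}. It produces a unique $f^\dagger\cn a'\iso b'$ such that $(f,f^\dagger)$ is a map of invertible pairs $\ul{a}\to\ul{b}$. By \cref{eq:inv-obj-map}, this map satisfies
\[
  \delta = (f^\dagger+f)\circ\eta
  \andspace
  \epz = \gamma\circ(f+f^\dagger).
\]

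The second step is a direct computation starting from $\fet_b=\gamma\circ\beta_{b',b}\circ\delta$. Substituting the first equation gives
\[
  \fet_b=\gamma\circ\beta_{b',b}\circ(f^\dagger+f)\circ\eta .
\]
Naturality of $\beta$ gives $\beta_{b',b}\circ(f^\dagger+f)=(f+f^\dagger)\circ\beta_{a',a}$, so
\[
  \fet_b=\gamma\circ(f+f^\dagger)\circ\beta_{a',a}\circ\eta .
\]
Applying the second equation, $\gamma\circ(f+f^\dagger)=\epz$, yields $\fet_b=\epz\circ\beta_{a',a}\circ\eta=\fet_a$.

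There is essentially no obstacle; the only substantive input is the existence part of \cref{lem:iso-to-mapofinv}. One could alternatively phrase the argument as $\fet$ being natural with respect to maps in $\pinv{A}$ and invoke \cref{prop:pi-2nat} (full on isomorphisms), but the direct computation above suffices.
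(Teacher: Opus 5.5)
Your proof is correct and is essentially the paper's argument: you extend $f$ to a map of invertible pairs $(f,f^\dagger)$ via \cref{lem:iso-to-mapofinv}, then combine the two axioms \cref{eq:inv-obj-map} with naturality of $\beta$, which the paper draws as a single commutative diagram rather than writing as a chain of equations. The one addition in the paper is a preliminary remark, via \cref{lem:weak-vs-structured-inv}, that any $b \iso a$ is automatically the base object of some invertible pair $\ul{b}$, so that $\fet_b$ is defined for every object in the isomorphism class; you instead take $\ul{b}$ as given.
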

\begin{proof}
  Suppose $b$ is an object of $A$ with an isomorphism $f\cn a \iso b$.
  Following the discussion in \cref{rmk:inverse-equivs,lem:weak-vs-structured-inv} we conclude that $b$ is invertible and is the base object of some invertible pair $\ul{b} = (b,b',\de,\ga)$.
  Then, \cref{lem:iso-to-mapofinv} shows that $f$ extends to an isomorphism of invertible pairs (\cref{defn:inv-obj-map}).
  \[
    (f,f^\dagger)\cn \ul{a} \to \ul{b} \inspace \pinv{A}.
  \]
  The defining properties \cref{eq:inv-obj-map} together with naturality of $\beta$ shows that the following diagram commutes and hence $\fet_a = \fet_b$.
  \[
    \begin{tikzpicture}[x=13ex,y=9ex,vcenter]
      \draw[0cell] 
      (0,0) node (a) {0}
      (a)++(.75,.5) node (b) {a' + a}
      (b)++(1.5,0) node (c) {a + a'}
      (c)++(.75,-.5) node (d) {0}
      (a)++(.75,-.5) node (b') {a' + a}
      (b')++(1.5,0) node (c') {a + a'}
      ;
      \draw[1cell] 
      (a) edge node[pos=.6] {\eta} (b)
      (b) edge node {\beta} (c)
      (c) edge node[pos=.4] {\epz} (d)
      (a) edge node[',pos=.6] {\de} (b')
      (b') edge node['] {\beta} (c')
      (c') edge node[',pos=.4] {\ga} (d)
      (b) edge node {f^\dagger + f} (b')
      (c) edge node['] {f+ f^\dagger} (c')
      ;
      \draw[1cell,rounded corners]
      (a) -- ($(a)+(0,1.0)$) -- node {\fet_a} ($(d)+(0,1.0)$) -- (d)
      ;
      \draw[1cell,rounded corners]
      (a) -- ($(a)+(0,-1.0)$) -- node['] {\fet_b} ($(d)+(0,-1.0)$) -- (d)
      ;
    \end{tikzpicture}
  \]
\end{proof}

\begin{lem}\label{lem:betaaa=8aaa}
  Let $(A, +, 0, \beta)$ be a permutative category, and let $(a,a', \eta, \epz)$ be an invertible pair in $A$.
  Each of the following equalities holds in $A$.
  \begin{align}
    \fet_a &= \fet_{a'}
    &\inspace& A(0,0),\label{it:8a=8a'}\\
    \beta_{a,a} &= \fet_a + 2\cdot a
    &\inspace& A(a+a,a+a),\  \andspace\label{it:betaaa=8aaa} \\
    \beta_{a',a'} &= \fet_a + 2\cdot a'
    &\inspace& A(a'+a',a'+a').\label{it:betaa'a'=8aa'a'} 
  \end{align}
\end{lem}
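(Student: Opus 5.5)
For the first equality \cref{it:8a=8a'}, I will use the switched invertible pair $(a', a, \epz^{-1}, \eta^{-1})$ from \cref{lem:inv-switch}~\cref{it:inv-switch-pair}. By definition \cref{notn:8},
\[
  \fet_{a'} = \eta^{-1} \circ \beta_{a,a'} \circ \epz^{-1}.
\]
Since $\beta_{a,a'} = \beta_{a',a}^{-1}$ by \cref{eq:symm-axioms}, this composite is exactly $(\epz \circ \beta_{a',a} \circ \eta)^{-1} = \fet_a^{-1}$. Then \cref{lem:64=1} gives $\fet_a^{-1} = \fet_a$, proving $\fet_a = \fet_{a'}$.

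For \cref{it:betaaa=8aaa}, I will reduce to an equality of automorphisms of $0$ using \cref{rmk:not-smf}. Translation by $a'+a'$ gives a monoid isomorphism $A(a+a,a+a) \to A(a'+a'+a+a,\, a'+a'+a+a)$. Conjugating further by the isomorphism $(1+\eta+1)\circ\eta \cn 0 \to a'+a'+a+a$ identifies $A(a+a,a+a)$ with $A(0,0)$. Under this identification, $\beta_{a,a}$ goes to
\[
  \eta^{-1}\circ(1+\eta^{-1}+1)\circ(1+1+\beta_{a,a})\circ(1+\eta+1)\circ\eta = \figC_a,
\]
by \cref{notn:figC}. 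On the other side, $\fet_a + 2\cdot a$ goes to $\fet_a$, using the Eckmann--Hilton \cref{lem:EH} to slide the unit endomorphism $\fet_a$ past the conjugating isomorphisms. So the two sides have images $\figC_a$ and $\fet_a$, which are equal by \cref{lem:fet-figC-figH}. Injectivity of the identification then gives $\beta_{a,a} = \fet_a + 2\cdot a$.

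The step I expect to need the most care is this bookkeeping. Concretely, let $\phi = (1+\eta+1)\circ\eta$. Then $\figC_a = \phi^{-1}\circ(a'+a'+\beta_{a,a})\circ\phi$, while $\phi^{-1}\circ(a'+a'+\fet_a+a+a)\circ\phi = \fet_a$ by \cref{lem:EH}~\cref{it:EH1}. The map $g \mapsto \phi^{-1}\circ(a'+a'+g)\circ\phi$ is injective, being a composite of the isomorphism \cref{eq:zqmark-hom} (with $z = a'+a'$, which is invertible) and conjugation. This yields the claim.

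Finally, \cref{it:betaa'a'=8aa'a'} follows by applying \cref{it:betaaa=8aaa} to the switched pair $(a',a,\epz^{-1},\eta^{-1})$. That gives $\beta_{a',a'} = \fet_{a'} + 2\cdot a'$, and then \cref{it:8a=8a'} replaces $\fet_{a'}$ by $\fet_a$.
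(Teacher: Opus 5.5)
Your proof is correct. For \cref{it:8a=8a'} and \cref{it:betaa'a'=8aa'a'} it is the paper's argument. The paper also cites \cref{lem:8-invariant} to note that the choice of invertible pair with base $a'$ does not affect $\fet_{a'}$, which justifies computing with the switched pair.

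For \cref{it:betaaa=8aaa} your route differs. The paper reduces to $\beta_{a,a} = a + \figH_a + a$ and checks this by a direct diagram. It inserts a zigzag $a \to a+a'+a$ on each copy of $a$, via $\epz^{-1}$ on one and $\eta$ on the other. Functoriality then turns $\beta_{a,a}$ into the middle swap. Finally it removes the zigzags with the triangle identities for $(\eta,\epz)$ and $(\epz^{-1},\eta^{-1})$. You instead observe that $\figC_a$ is, by definition, the conjugate $\phi^{-1}\circ(\id_{a'+a'}+\beta_{a,a})\circ\phi$, so the comparison with $\beta_{a,a}$ is purely formal. The cost is a cancellation step: faithfulness of $a'+a'+(-)$, i.e.\ \cref{rmk:not-smf} applied to the invertible object $a'+a'$.

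So your approach pushes all triangle-identity work into \cref{lem:fet-figC-figH} and needs no new diagram. The paper's is a self-contained equational rewriting that never cancels a translation.

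One small point, common to both proofs: rewriting $\id_{a'+a'}+\fet_a+\id_{a+a}$ as $\fet_a+\id_{a'+a'+a+a}$ needs more than \cref{lem:EH} alone. (In the paper the analogous step is $a+\figH_a+a = \figH_a + 2\cdot a$.) It uses naturality of $\beta$ together with $\beta_{0,z}=\id_z$, as in \cref{rmk:nat-beta-0}.
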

\begin{proof}
  Note that \cref{it:betaa'a'=8aa'a'} follows from \cref{it:8a=8a',it:betaaa=8aaa}.
  To verify \cref{it:8a=8a'}, the automorphism $\fet_{a'}$ can be defined using the invertible pair $(a',a,{\epz}^\inv, {\eta}^\inv)$ by \cref{lem:inv-switch}.
  Any other invertible pair with base $a'$ will yield the same automorphism $\fet_{a'}$ by \cref{lem:8-invariant}.
  The definition \cref{notn:8} for $\fet_a$ and $\fet_{a'}$, followed by the 2-Torsion of $\fet$ (\cref{lem:64=1}), yields the desired equality as $\fet_{a'} = \fet_a^\inv = \fet_a$.
  
  Now we prove \cref{it:betaaa=8aaa}.
  By the Eckmann-Hilton \cref{lem:EH} and \cref{lem:fet-figC-figH}, it suffices to show $\beta_{a,a} = a + \figH_a + a$. 
  For that purpose, consider the following diagram, which again uses \cref{conv:ol-pm-cat} and is explained below.
  \begin{equation}\label{eq:diagram-betaaa=8aaa}
    \begin{tikzpicture}[x=12ex,y=5ex,vcenter]
      \draw[0cell=.8] 
      (0,0) node (a) {++}
      (a)++(4,0) node (a') {++}
      (a)++(1,-1) node (b) {+-++-+}
      (b)++(-1,-1) node (c) {++}
      (c)++(1,-1) node (d) {+-++-+}
      (d)++(3,0) node (d') {+-++-+}
      (a')++(1,-1) node (c') {++}
      ;
      \draw[1cell=.7] 
      (a) edge node {\beta} (a')
      (a) edge node[pos=.7] {(\olepz 1)(1 \eta)} (b)
      (b) edge['] node[pos=.3] {(1 \oleta)(\epz 1)} (c)
      (c) edge node {(1 \eta)(\olepz 1)} (d)
      (d) edge node {(3 \ 4)} (d')
      (a') edge['] node {(\olepz 1)(1 \eta)} (d')
      (d') edge node[pos=.7] {(1 \oleta)(\epz 1)} (c')
      (a) edge[bend right=10] node {1} (c)
      (b) edge[bend left=10] node {1} (d)
      (a') edge[bend left=10] node {1} (c')
      ;
      \draw[1cell=.7, rounded corners]
      (c) -- ++(0,-1.5) -- node['] {1 \figH 1} ++(5,0) -- (c')
      ;
    \end{tikzpicture}
  \end{equation} 
  The above diagram commutes by definition of $\figH_a$ \cref{notn:figH}, functoriality of the monoidal sum, the triangle identities for $(\olepz,\oleta)$ and $(\eta,\epz)$ (twice each), and invertibility of $\eta$ and $\epz$ (once each).
  Equality of the two outer composites shows $\beta_{a,a} = a + \figH_a + a$, as desired.
\end{proof}

\begin{lem}\label{lem:feightone-equals-eightone}
  Let $(A, +, 0, \beta)$ and $(B, +, 0, \beta)$ be permutative categories, and $F \cn A \to B$ be a symmetric monoidal functor between them.
  Suppose that $\ul{a} = (a,a',\eta,\epz)$ is an invertible pair in $A$.
  Then the following diagram commutes in $B$.
  \begin{equation}\label{eq:feightone-equals-eightone}
    \begin{tikzpicture}[x=24ex,y=8ex,vcenter]
      \draw[0cell] 
      (0,0) node (a) {0}
      (1,0) node (b) {F(0)}
      (1,-1) node (c) {F(0)}
      (0,-1) node (d) {0}
      ;
      \draw[1cell] 
      (a) edge node {F_0} (b)
      (b) edge[] node {F(\mathtt{8}_a)} (c)
      (a) edge[swap] node {\mathtt{8}_{F(a)}} (d)
      (d) edge[swap] node {F_0} (c)
      ;
    \end{tikzpicture}
  \end{equation}
\end{lem}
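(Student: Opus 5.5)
The plan is to unwind $\fet_{F(a)}$ using the invertible pair $F\ul{a} = (Fa, Fa', \eta^F, \epz^F)$ from \cref{lem:smf-map-inv}. This choice is legitimate because \cref{lem:8-invariant} shows that $\fet_{F(a)}$ does not depend on which invertible pair with base object $F(a)$ we use. Substituting the definitions \cref{eq:smf-map-inv-eta,eq:smf-map-inv-epz} gives
\[
  \fet_{F(a)} = F_0^{-1}\circ F\epz \circ F_{2;a,a'} \circ \beta_{Fa',Fa} \circ F_{2;a',a}^{-1} \circ F\eta \circ F_0 .
\]

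The key step is the symmetry axiom for $F$, the last equation in \cref{eq:smfunctor}, applied with $x = a'$ and $y = a$. It reads $F_{2;a,a'}\circ\beta_{Fa',Fa} = F(\beta_{a',a})\circ F_{2;a',a}$. Using it, the middle three factors collapse:
\[
  F_{2;a,a'}\circ \beta_{Fa',Fa}\circ F_{2;a',a}^{-1} = F(\beta_{a',a}).
\]

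Substituting this back and using functoriality of $F$, we obtain
\[
  \fet_{F(a)} = F_0^{-1}\circ F(\epz\circ\beta_{a',a}\circ\eta)\circ F_0 = F_0^{-1}\circ F(\fet_a)\circ F_0 .
\]
Composing both sides on the left with $F_0$ gives $F_0\circ \fet_{F(a)} = F(\fet_a)\circ F_0$. This is exactly commutativity of \cref{eq:feightone-equals-eightone}.

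There is no real obstacle here. The only point needing care is the justification that we may compute $\fet_{F(a)}$ with the transported pair $F\ul{a}$, and \cref{lem:8-invariant} handles this. The remaining care is bookkeeping: checking the indices in the symmetry axiom so that the source $Fa'+Fa$ and target $F(a+a')$ match.
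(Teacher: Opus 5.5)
Your proposal is correct and is essentially the paper's proof. The paper also unpacks $\fet_{F(a)}$ via $\eta^F$ and $\epz^F$, then uses the symmetry axiom for $F$ (at the pair $a',a$) and functoriality of $F$ to collapse the composite to $F_0^{-1}\circ F(\fet_a)\circ F_0$. Your explicit appeal to \cref{lem:8-invariant}, to justify computing $\fet_{F(a)}$ with the transported pair $F\ul{a}$, makes precise a point the paper leaves implicit.
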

\begin{proof}
  Unpacking the definition of $\fet_{Fa}$ in \cref{notn:8} via $\eta^F$ in \cref{eq:smf-map-inv-eta} and $\epz^F$ in \cref{eq:smf-map-inv-epz}, functoriality of $F$ reduces commutativity of \cref{eq:feightone-equals-eightone} to the symmetry axiom \cref{eq:smfunctor} for $F_2$ and $\beta_{a,a'}$.
\end{proof}

\section{The free permutative category on an invertible generator}
\label{sec:flex-mod}

The purpose of this section is to define a permutative category $\P$ and show that it has two key properties: (\textit{i}) $\P$ represents the 2-functor $\pinv{(?)}$ of \cref{prop:pinv-2fun}, and (\textit{ii}) $\P$ has the additional property of being \emph{flexible} with respect to the 2-monad $\SS$ for permutative categories.
The first of these properties, that $\P$ represents $\pinv{(?)}$, justifies calling $\P$ the free permutative category on an invertible generator.
\Cref{defn:ppushouts} constructs $\P$ via a sequence of colimits.
The advantage of such a construction is that representability of $\pinv{(?)}$ (\cref{prop:prepn}) is a straightforward exercise in universal properties.

Our second desideratum, that of flexibility, deserves additional explanation.
In the simplest terms, flexible algebras over a 2-monad are a generalization of the free algebras. 
In the case of a 2-monad $T$ on $\cat$, flexible $T$-algebras can often be described as those algebras whose equations at the level of objects are no more than those required by $T$.
Our interest in proving the flexibility of $\P$ is actually to apply the slightly weaker property of semiflexibility (\cref{defn:semiflex}), a consequence of flexibility.
A semiflexible algebra $X$ is one for which every pseudo-$T$-morphism $F \cn X \to Y$ is isomorphic to a strict $T$-morphism; translating to permutative categories, a semiflexible permutative category $A$ is one such that every symmetric monoidal functor $F \cn A \to B$ is isomorphic to a \emph{strict} symmetric monoidal functor $\hat{F} \cn A \to B$.

We begin this section with an overview of pseudomorphism classifiers, flexible and semiflexible algebras, and the Lack model structure on $\permcats$.
This framework, and some standard model category arguments, prove that $\P$ is flexible (and therefore also semiflexible) in \cref{prop:pflex}.
We conclude with a strictification result, \cref{lem:pflex-id}, that will be a key ingredient in the proof of \cref{thm:ZZ-equiv-P1}.

Since $\SS$ is the 2-monad for permutative categories (\cref{thm:permcat-monadic}), the inclusion \cref{eq:incl-j}
\[
  j\cn \permcats \to \permcat
\]
is the inclusion of the 2-category of $\SS$-algebras, strict algebra morphisms, and algebra 2-cells among the more general $\SS$-algebra \emph{pseudo}-morphisms---that is, the strong symmetric monoidal functors.
\begin{defn}[Pseudomorphism Classifier]
  A \term{pseudomorphism classifier} for $\permcat$ is a left 2-adjoint $Q \dashv j$ as shown below.
  \begin{equation}\label{eq:Qi-adj}
    \begin{tikzpicture}[x=15ex,y=8ex,vcenter]
      \draw[0cell] 
      (0,0) node (x) {\permcat}
      (1,0) node (y) {\permcats}
      ;
      \draw[1cell] 
      (x) edge[bend left=12,transform canvas={yshift=.7mm}] node (L) {Q} (y) 
      (y) edge[bend left=12,transform canvas={yshift=-.7mm}] node (R) {j} (x) 
      ;
      \draw[2cell] 
      node[between=L and R at .5] {\bot}
      ;
    \end{tikzpicture}
  \end{equation}
  The unit $\zeta\cn 1 \to j Q$ has components that are symmetric monoidal functors
  \begin{equation}\label{eq:zeta}
    \zeta_X \cn X \to j Q X \forspace X \in \permcat.
  \end{equation}
  The counit $\delta\cn Q j \to 1$ has components that are \emph{strict} symmetric monoidal functors
  \begin{equation}\label{eq:delta}
    \delta_Y \cn Q j Y \to Y \forspace Y \in \permcats.
  \end{equation}
  The 2-adjunction $Q \dashv j$ then yields the following 2-natural isomorphism of categories, for permutative categories $X$ and $Y$:
  \begin{equation}\label{eq:Qj-free-forget}
    \permcats(Q X, Y) \cong \permcat(X, j Y).
  \end{equation}
\end{defn}

\begin{thm}[{\cite[Theorem~3.13]{BKP1989Two},~\cite[Section~4.1]{Lac02Codescent}}]\label{thm:Qexists}
  The 2-monad $\SS$ for permutative categories has a pseudomorphism classifier $Q \dashv j$.
\end{thm}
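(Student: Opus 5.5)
The plan is to follow the general construction of Blackwell--Kelly--Power, specialized to the 2-monad $\SS$ on $\cat$, and to exhibit $QX$ as a codescent object of a diagram of free $\SS$-algebras. First, I will record that $\SS$ is a finitary (indeed filtered-colimit preserving) 2-monad on the locally finitely presentable 2-category $\cat$. This follows from the formula \cref{eq:SS-C}, since $C \mapsto E\Sigma_n \times_{\Sigma_n} C^n$ preserves filtered colimits. Consequently, $\permcats \cong \SAlgs$ (\cref{thm:permcat-monadic}) is cocomplete, as in \cref{rmk:permcat-complete-cocomplete}, and in particular admits the weighted colimits we need.

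Second, for a permutative category $X$ with action $x\cn \SS X \to X$, I will form the truncated simplicial diagram of strict algebras
\[
  \SS^3 X \;\substack{\to\\ \to\\ \to}\; \SS^2 X \;\substack{\to\\ \to}\; \SS X,
\]
whose face maps are built from the multiplication $\mu$ and the action $x$, together with the degeneracy $\SS\eta$. I will then define $QX$ to be its (pseudo-)codescent object in $\permcats$, namely the universal strict algebra $Q X$ with a strict map $p\cn \SS X \to QX$ and an invertible algebra 2-cell $p\circ \SS x \iso p\circ \mu_X$ satisfying the usual unit and cocycle conditions. The key verification is the universal property \cref{eq:Qj-free-forget}. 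A strict map $QX \to Y$ corresponds to a strict map $g\cn \SS X \to Y$, that is, by \cref{eq:SS-free-forget}, a functor $f\cn X \to Y$, together with an invertible 2-cell whose components are isomorphisms $y\circ \SS f \iso f\circ x$ satisfying the cocycle and unit axioms. These are exactly the data $(f, f_2, f_0)$ and axioms \cref{eq:smfunctor} of a symmetric monoidal functor. 2-cells match monoidal transformations \cref{eq:montransf} in the same way. Naturality in $X$ and $Y$ is formal, which yields the left 2-adjoint $Q$, with unit $\zeta$ and counit $\delta$ as in \cref{eq:zeta,eq:delta}.

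I expect the main obstacle to be the step identifying pseudo-$\SS$-morphism data with codescent cocone data. The cocycle condition must be matched precisely with the associativity and unit axioms of a pseudomorphism in the general 2-monadic sense. Rather than redo this, I would cite \cite[Theorem~3.13]{BKP1989Two} for this identification for any accessible 2-monad with rank on $\cat$, and \cite[Section~4.1]{Lac02Codescent} for the codescent description. The remaining work would then be checking that pseudo-$\SS$-morphisms coincide with the (strong) symmetric monoidal functors of \cref{defn:smfunctor}. This follows from the presentation of $\SS$ in \cref{explanation:permcat-monadic}, because the generating operations of $E\Sigma$ are $+$, $0$, and $\beta$.
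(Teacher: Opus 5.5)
Your proposal is correct and takes the same approach as the paper, which gives no argument of its own and simply cites \cite[Theorem~3.13]{BKP1989Two} and \cite[Section~4.1]{Lac02Codescent}; your description of $QX$ as the codescent object of the truncated resolution $\SS^3 X$, $\SS^2 X$, $\SS X$, with strict maps out of it corresponding to pseudo-$\SS$-morphisms, is Lack's construction from the second reference. The identification of pseudo-$\SS$-morphisms with strong symmetric monoidal functors, which you flag as remaining work, is also taken for granted in the paper (it is how the inclusion $j$ is interpreted), so nothing further is needed.
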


\begin{defn}\label{defn:flex}
  A permutative category $A$ is \term{flexible} if the counit $\de_A$ \cref{eq:delta} is a \term{surjective} equivalence in $\permcats$: that is, $\de_A$ is part of an equivalence such that the reverse functor is a section.
\end{defn}

\begin{defn}\label{defn:semiflex}
  A permutative category $A$ is \term{semiflexible} if the inclusion $j \cn \permcats \to \permcat$ induces 2-natural equivalences of categories
  \begin{equation}\label{eq:semiflex}
    \permcats(A,X) \fto[\hty]{j} \permcat(j A,j X)
  \end{equation}
  for all permutative categories $X$.
\end{defn}

\begin{prop}\label{prop:sf-imp-f}
  Let $A$ be a permutative category.
  \begin{enumerate}
  \item\label{it:free-flex} If $A$ is free as a permutative category (i.e., $A \cong \SS\{\cC\}$ for some small category $\cC$), then it is flexible \cite[Corollary~5.6]{BKP1989Two}.
  \item\label{it:flex-is-semiflex} If $A$ is flexible, then it is semiflexible \cite[Theorem~4.7]{BKP1989Two}.
  \end{enumerate}
\end{prop}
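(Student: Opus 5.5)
The plan is to prove the two parts separately, working in $\permcats$ with the pseudomorphism classifier $Q \dashv j$ of \cref{thm:Qexists}. I will use two standard facts from the construction in \cite{BKP1989Two}, which I plan to cite rather than reprove. First, for every strict algebra $Y$, the counit $\de_Y$ and the unit component $\zeta_{jY}$ satisfy the triangle identity $j\de_Y \circ \zeta_{jY} = \id_{jY}$. Second, $\zeta_{jY}$ and $j\de_Y$ are mutually inverse equivalences in $\permcat$, so in particular $\de_Y$ is an equivalence of underlying categories.

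For \cref{it:free-flex}, suppose $A = \SS C$ with unit $\iota \cn C \to \SS C$. I would define a strict section $s \cn \SS C \to Q j \SS C$ as the unique strict symmetric monoidal functor extending the functor $\zeta_{\SS C} \circ \iota \cn C \to Q j \SS C$, using the free-forgetful adjunction \cref{eq:SS-free-forget}. The composite $\de \circ s$ is strict, and it restricts along $\iota$ to $j\de \circ \zeta \circ \iota = \iota$ by the triangle identity. Uniqueness in \cref{eq:SS-free-forget} then gives $\de \circ s = \id$.

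It remains to produce an invertible algebra 2-cell $s \de \cong \id$ in $\permcats$. Since $\de$ is an equivalence of underlying categories, it is fully faithful and essentially surjective. Hence there is a unique natural isomorphism $\theta \cn s\de \To \id$ with $\de * \theta = \id_\de$: each component is the unique preimage of the identity $\de s \de x = \de x$. The monoidal naturality and unit axioms \cref{eq:montransf} for $\theta$ are equalities of morphisms in $Q j \SS C$. Applying the faithful strict functor $\de$ turns each of them into an identity, so both hold. This shows $\de_A$ is a surjective equivalence, as required by \cref{defn:flex}.

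For \cref{it:flex-is-semiflex}, I compose $j \cn \permcats(A,X) \to \permcat(jA,jX)$ with the isomorphism \cref{eq:Qj-free-forget}. By the standard description of transposes, this composite is precomposition with the counit, $F \mapsto F \circ \de_A$. Now $\de_A$ is an equivalence in the 2-category $\permcats$, with pseudo-inverse its section. Precomposition with an equivalence in a 2-category is an equivalence of hom-categories, so $j$ is an equivalence. Its 2-naturality in $X$ follows from the 2-naturality of \cref{eq:Qj-free-forget}.

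I expect the main obstacle to be the identification, in both parts, of the relevant maps with the abstract unit and counit: in part one the triangle identity $j\de\circ\zeta = \id$, and in part two the statement that the transpose of $jF$ is $F\circ\de_A$. Both are formal consequences of the 2-adjunction, and I would verify them by chasing the definitions rather than relying on the explicit BKP construction. The only genuinely nonformal input is that $\de$ is an underlying equivalence, which I would take from \cite{BKP1989Two}.
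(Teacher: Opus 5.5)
Your argument is correct. The paper proves nothing here: both parts are imported from \cite[Corollary~5.6, Theorem~4.7]{BKP1989Two}. So your route differs in that it supplies an actual argument, reducing both citations to the 2-adjunction $Q \dashv j$ plus one nonformal input, namely that the counit $\de_Y$ is an equivalence of underlying categories. Of that input you in fact only use full faithfulness.

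Part (2) is then entirely formal. By 2-naturality of $\de$, the composite of $j$ with \cref{eq:Qj-free-forget} is precomposition with $\de_A$, and precomposition with an equivalence in $\permcats$ is an equivalence of hom-categories. This uses exactly the paper's definition of flexibility and nothing more.

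Part (1) is a direct construction, and each step checks out:
\begin{itemize}
\item $\de s = \id$ follows from the triangle identity and uniqueness in the free adjunction.
\item $\theta$ is forced by full faithfulness of $\de$.
\item Monoidality of $\theta$ follows by applying the faithful strict functor $\de$, using that $s\de$ is strict.
\end{itemize}

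There is an equivalent shortcut that is closer to how BKP argue. Both $js$ and $\zeta$ are sections of the equivalence $j\de$, so $js \cong \zeta$ in $\permcat$. Hence $s\de \cong \zeta \circ j\de \cong \id$, and since $\permcats$ is locally full in $\permcat$, this isomorphism lives in $\permcats$.

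What your approach buys is transparency: it isolates exactly which piece of the BKP machinery the paper relies on. That piece is verifiable for $\SS$ directly, e.g.\ from the explicit description of $Q$ for permutative categories in \cite{GJalmorcoh}. The citation route buys generality, covering arbitrary 2-monads with rank, at no cost in length.

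Within the paper's own framework, part (1) also follows from \cref{prop:lack-model,prop:cofibrant-flexible}. Since $\emptyset \to C$ is injective on objects, $\SS\emptyset \to \SS C$ is a cofibration out of the initial object, so $\SS C$ is cofibrant and hence flexible.
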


\begin{rmk}[Quillen model structures]\label{rmk:model-structure}
  Recall that a \term{Quillen model structure} on a category $M$ consists three classes of morphisms, called \emph{fibrations}, \emph{cofibrations}, and \emph{weak equivalences}.
  These classes are required to satisfy several axioms related to factorization, whose specific details will not be used in this work.
  We refer the reader to \cite{hovey1999mc}, \cite{Hir03Model}, or \cite{Lac02Quillen2} for more thorough treatment.

  Our work below will depend only on the following three elementary statements from the theory of model structures.
  \begin{enumerate}
  \item A category $M$ is called a \term{Quillen model category}, or simply a \term{model category}, when it is complete and cocomplete and is equipped with a Quillen model structure.
  \item\label{it:cof-cof} An object $X$ in a model category $M$ is called \term{cofibrant} if the unique morphism from the initial object of $M$ to $X$ is a cofibration in $M$.
    Thus, if $X$ is cofibrant and there is a cofibration $X \to Y$ for some object $Y$, then $Y$ is also cofibrant.
  \item\label{it:cof-po} Suppose that $C \xleftarrow{g} A \fto{f} B$ is a span in a model category $M$, and let $D$ denote its pushout.
    If $g$ is a cofibration, then so is the structure morphism $B \to D$ for the pushout.
    In particular, if $B$ is cofibrant and $g$ is a cofibration, then $D$ is also cofibrant.
  \end{enumerate}
  An explanation of this third property is elementary, but beyond our current scope; it requires further details of the lifting properties that define a Quillen model structure.
  We refer the reader to \cite[Section~7.2.9]{Hir03Model} for careful explanations.
\end{rmk}

\begin{defn}[{\cite[Theorem~2]{JT1991Strong}}]\label{defn:can-mod}
  The \term{canonical model structure} on $\cat$ is the Quillen model structure such that:
  \begin{itemize}
  \item fibrations are isofibrations,
  \item weak equivalences are equivalences of categories, and
  \item cofibrations are injective-on-objects functors.
  \end{itemize}
\end{defn}

\begin{prop}[{\cite[Theorem~5.5]{Lack2007Homotopy}}]\label{prop:lack-model}
  There is a Quillen model structure, called the \term{Lack model structure}, on $\permcats$ transferred from the canonical model structure on $\cat$.
  In this model structure, the weak equivalences are the equivalences of underlying categories and the fibrations are the isofibrations of underlying categories.

  Thus, the free-forgetful adjunction
  \[
    (\SS \dashv U)\cn \cat \lradj \permcats
  \]
  is a Quillen adjunction.
  In particular, if $h\cn C \to D$ is an injective-on-objects functor, then $\SS h\cn \SS C \to \SS D$ is a cofibration in $\permcats$.
\end{prop}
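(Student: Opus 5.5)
The statement has three parts. First, the Lack model structure exists on $\permcats$, transferred from the canonical model structure of \cref{defn:can-mod}, with weak equivalences and fibrations created by $U$. Second, $\SS \dashv U$ is a Quillen adjunction. Third, $\SS$ sends injective-on-objects functors to cofibrations. The plan is to take the first part from \cite[Theorem~5.5]{Lack2007Homotopy} and deduce the other two formally.

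For the existence, I would invoke Lack's transfer theorem for algebras over a 2-monad $T$ on $\cat$. It requires two things: $T$ is accessible (preserves filtered colimits), and $\TAlgs$ is complete and cocomplete. Completeness and cocompleteness of $\permcats$ are recorded in \cref{rmk:permcat-complete-cocomplete}. For accessibility, \cref{rmk:free-permcats} gives the formula
\[
  \SS C = \coprod_n E\Sigma_n \times_{\Sigma_n} C^n .
\]
In this formula, $C \mapsto C^n$ preserves filtered colimits in $\cat$ because finite limits commute with filtered colimits. The remaining operations (taking the product with $E\Sigma_n$, taking the quotient by $\Sigma_n$, and taking coproducts) are colimits or commute with filtered colimits. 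Hence $\SS$ is finitary.

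Lack's theorem also needs the path-object (or fibrant-replacement) condition. In $\cat$ every object is fibrant, and the needed path objects lift to $\SS$-algebras via the pseudo-limit or iso-comma constructions that Lack provides for arbitrary 2-monads. This is where I expect the main technical obstacle to sit, but it is handled in the general setting of \cite{Lack2007Homotopy}. I will simply cite it, together with the identification $\SAlgs \cong \permcats$ from \cref{thm:permcat-monadic}.

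The Quillen adjunction is then formal. By construction of a transferred structure, $U$ preserves fibrations and weak equivalences, and hence also trivial fibrations. A right adjoint preserving fibrations and trivial fibrations is a right Quillen functor, so its left adjoint $\SS$ preserves cofibrations and trivial cofibrations. Finally, by \cref{defn:can-mod} an injective-on-objects functor $h$ is a cofibration in $\cat$. Therefore $\SS h$ is a cofibration in $\permcats$, which is the last claim.
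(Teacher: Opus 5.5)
Your proposal is correct and follows the paper's approach: the paper cites \cite[Theorem~5.5]{Lack2007Homotopy} for the transferred model structure (via $\SAlgs \cong \permcats$), then obtains the Quillen adjunction and the cofibrancy of $\SS h$ formally, exactly as you do. Your extra check that $\SS$ is finitary (hence accessible, which in general only requires preserving $\kappa$-filtered colimits for some regular cardinal $\kappa$) is a harmless elaboration of a hypothesis the paper leaves implicit.
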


The following is a direct application of \cite[Theorem~5.12]{Lack2007Homotopy} in the context of permutative categories.
\begin{prop}\label{prop:cofibrant-flexible}
  In the Lack model structure on $\permcats$, the cofibrant objects are precisely the flexible permutative categories.
\end{prop}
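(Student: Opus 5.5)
The plan is to deduce the statement from \cite[Theorem~5.12]{Lack2007Homotopy}. That theorem says: for a 2-monad $T$ on $\cat$ with rank, the cofibrant objects of the transferred model structure on $T$-$\mathrm{Alg}_s$ are exactly the flexible $T$-algebras. So the work is to check its hypotheses for $\SS$ and to confirm that Lack's notion of flexibility agrees with \cref{defn:flex}.

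First, I will use \cref{thm:permcat-monadic} to identify $\permcats$ with $\SAlgs$ over $\cat$. Next I will check that $\SS$ has rank. By \cref{rmk:free-permcats}, $\SS C = \coprod_n E\Sigma_n \times_{\Sigma_n} C^n$ is built from finite powers, products with fixed categories, quotients by finite group actions and coproducts. All of these commute with filtered colimits in $\cat$, so $\SS$ is finitary. Under the identification with $\SAlgs$, the model structure of \cref{prop:lack-model} is by construction the one transferred from \cref{defn:can-mod}, which is the structure Lack considers. The pseudomorphism classifier of \cref{thm:Qexists} is likewise Lack's $Q$, with counit $\delta_A$ \cref{eq:delta}.

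Second, I will reconcile the definitions of flexibility. Lack defines flexibility as: $\delta_A$ has a section in $\SAlgs$. \cref{defn:flex} asks for more, namely a section that is also an inverse equivalence in $\permcats$. To bridge this I will use the standard fact from \cite{BKP1989Two} that $\delta_A$ is an equivalence in the pseudo-morphism 2-category, with pseudo-inverse $\zeta_A$. Given a strict section $s$, I then need an algebra 2-cell $s\delta_A \cong 1_{QA}$ living in $\permcats$. I will obtain it from the 2-adjunction $Q \dashv j$, as follows. Strict maps $QA \to QA$ correspond to pseudomorphisms $A \to QA$. Under this correspondence $1_{QA}$ corresponds to $\zeta_A$, and $s\delta_A$ corresponds to $j(s)\circ j(\delta_A)\circ\zeta_A = j(s)$. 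So an invertible 2-cell $j(s) \cong \zeta_A$ in $\permcat$ transports across \cref{eq:Qj-free-forget} to the required 2-cell. Such a 2-cell exists because $j(s)$ and $\zeta_A$ are both pseudo-sections of the equivalence $\delta_A$. The converse direction, from \cref{defn:flex} to Lack's definition, is immediate.

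As an independent check of the direction cofibrant $\Rightarrow$ flexible, I will also sketch the argument inside the model structure. The map $\delta_A$ is surjective on objects and an equivalence of underlying categories. Hence it is a trivial fibration in the transferred structure. If $A$ is cofibrant, lifting $0 \to A$ against $\delta_A$ produces a strict section. The converse direction, flexible $\Rightarrow$ cofibrant, uses that $QA$ is always cofibrant, being built by Lack's codescent construction from free algebras on cofibrations. Since a flexible $A$ is a retract of $QA$, it is cofibrant too.

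I expect the main obstacle to be the reconciliation of definitions, especially promoting a strict section to a full equivalence in $\permcats$. If the argument via the 2-adjunction is not clean, the fallback is to quote the corresponding statement in \cite[Section~4]{BKP1989Two} or \cite{Lack2007Homotopy} directly.
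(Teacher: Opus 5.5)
Your proposal is correct and follows the paper's route. The paper just invokes \cite[Theorem~5.12]{Lack2007Homotopy} for the 2-monad $\SS$, and your extra steps are sound: $\SS$ is finitary, and you reconcile the two flexibility definitions (a strict section of $\delta_A$ versus a surjective equivalence) using the fact that any two pseudo-sections of the equivalence $j\delta_A$ are isomorphic. One small point about your optional independent check: cofibrancy of $QA$ is better justified by lifting pseudomorphisms along surjective-on-objects equivalences and transposing across $Q \dashv j$, rather than by appealing to the codescent construction.
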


For the following, recall the 2-dimensional aspect of (2-)colimits, noted in \cref{rmk:permcat-complete-cocomplete}.
\begin{defn}\label{defn:ppushouts}
  Consider a set with two objects, $\anb$, and use the following notation for small categories with the same objects and the indicated non-identity morphisms.
  \begin{center}
    \begin{tabular}{c|l}
      Notation & Description\\
      \hline
      $\anb$ & Discrete category\\
      $\aisob$ & Isomorphism between $a$ and $b$\\
      $\atob$ & One non-identity morphism from $a$ to $b$\\
      $\atotob$ & Two distinct non-identity morphisms from $a$ to $b$
    \end{tabular}
  \end{center}
  Then, we use the notation
  \begin{align}
    p \cn & \anb \to \aisob \andspace\label{eq:anb-aisob}\\
    q \cn & \atotob \to \atob\label{eq:atotob-atob}
  \end{align}
  for the unique functors that are identities on objects and have the indicated domain and codomain.

  Now we use the notation above to define a permutative category $\P$ as follows.
  \begin{description}
    \item[Step 1] Let $\S_1$ be the following pushout in $\permcats$, explained below.
      \begin{equation}\label{eq:ppush-1}
        \begin{tikzpicture}[x=16ex,y=8ex,vcenter]
          \draw[0cell] 
          (0,0) node (a) {\SS\anb}
          (2,0) node (b) {\SS\{x,y\}}
          (0,-1) node (c) {\SS\aisob}
          (2,-1) node (d) {\S_1}
          (1.75,-.75) node (po) {\ulcorner}
          ;
          \draw[1cell] 
          (a) edge node {f} node[',scale=.8] {\anb \mapsto \{x+y,0\}} (b)
          (b) edge node {} (d)
          (a) edge[swap] node {\SS p} (c)
          (c) edge[swap] (d)
          ;
        \end{tikzpicture}
      \end{equation}
      The strict symmetric monoidal functor $f$ along the top is uniquely induced by requiring $f(a) = x+y, f(b) = 0$.
      We write $x, y \in \S_1$ for the images of the generating objects $x, y \in \SS\{x,y\}$ under $\SS\{x,y\} \to \S_1$.
      We define $\epz \cn x+y \cong 0$ as the image of the isomorphism $a \cong b$ in $\SS\aisob$ under $\SS\aisob \to \S_1$.
    \item[Step 2] Let $\S_2$ be the following pushout in $\permcats$, explained below.
      \begin{equation}\label{eq:ppush-2}
        \begin{tikzpicture}[x=16ex,y=8ex,vcenter]
          \draw[0cell] 
          (0,0) node (a) {\SS\anb}
          (2,0) node (b) {\S_1}
          (0,-1) node (c) {\SS\aisob}
          (2,-1) node (d) {\S_2}
          (1.75,-.75) node (po) {\ulcorner}
          ;
          \draw[1cell] 
          (a) edge node {f} node[',scale=.8] {\anb \mapsto \{0,y+x\}} (b)
          (b) edge node {} (d)
          (a) edge[swap] node {\SS p} (c)
          (c) edge[swap] (d)
          ;
        \end{tikzpicture}
      \end{equation}
      The strict symmetric monoidal functor $f$ along the top is uniquely induced by requiring $f(a) = 0, f(b) = y+x$.
      We write $x, y \in \S_2$ for the images of the objects $x, y \in \S_1$ under $\S_1 \to \S_2$.
      We define $\eta \cn 0 \cong y+x$ as the image of the isomorphism $a \cong b$ in $\SS\aisob$ under $\SS\aisob \to \S_2$.
    \item[Step 3] Let $\S_3$ be the following pushout in $\permcats$, explained below.
      \begin{equation}\label{eq:ppush-3}
        \begin{tikzpicture}[x=16ex,y=8ex,vcenter]
          \draw[0cell] 
          (0,0) node (a) {\SS\atotob}
          (2,0) node (b) {\S_2}
          (0,-1) node (c) {\SS\atob}
          (2,-1) node (d) {\S_3}
          (1.75,-.75) node (po) {\ulcorner}
          ;
          \draw[1cell] 
          (a) edge node {f}
          node[',scale=.8] {\atotob \mapsto \{(\epz +x)\circ(x +\eta)\,,\, \id_x\}}
          (b)
          (b) edge node {} (d)
          (a) edge[swap] node {\SS q} (c)
          (c) edge[swap] node {} (d)
          ;
        \end{tikzpicture}
      \end{equation}
      The strict symmetric monoidal functor $f$ along the top is uniquely induced by sending one of the arrows $a \to b$ to the composite
      \[
        x \fto{x+ \eta} x+y+x \fto{\epz +x} x
      \]
      and the other to $\id_x$.
    \item[Step 4] Let $\P$ be the following pushout in $\permcats$, explained below.
      \begin{equation}\label{eq:ppush-4}
        \begin{tikzpicture}[x=16ex,y=8ex,vcenter]
          \draw[0cell] 
          (0,0) node (a) {\SS\atotob}
          (2,0) node (b) {\S_3}
          (0,-1) node (c) {\SS\atob }
          (2,-1) node (d) {\P}
          (1.75,-.75) node (po) {\ulcorner}
          ;
          \draw[1cell] 
          (a) edge node {f}
          node[',scale=.8] {\atotob \mapsto \{(y +\epz)\circ(\eta+ y)\,,\, \id_y\}}
          (b)
          (b) edge node {} (d)
          (a) edge[swap] node {\SS q} (c)
          (c) edge[swap] node {} (d)
          ;
        \end{tikzpicture}
      \end{equation}
      The strict symmetric monoidal functor $f$ along the top is uniquely induced by sending one of the arrows $a \to b$ to the composite
      \[
        y \fto{\eta +y} y+x+y \fto{y +\epz} y
      \]
      and the other to $\id_y$.
  \end{description}
  We will write $\P$ or $\P\{x\}$ for the pushout constructed above, and call this the
  \term{free permutative category on an invertible generator $x$}.
  Its universal property is developed in \cref{prop:prepn}.

  For a finite set $G$, we will write
  \begin{equation}\label{equation:PPG-defn}
    \PP G = \coprod_{g \in G} \P\{g\},
  \end{equation}
  with the coproduct in $\permcats$ and the inverse to $g$ generically denoted $g'$.
  We call $\PP G$ the
  \term{free permutative category on a set of invertible generators, $G$}.
\end{defn}

\begin{prop}\label{prop:prepn}
  The permutative category $\P$ represents the 2-functor 
  \[
    \pinv{(-)} \cn \permcats \to \cat.
  \]
  Equivalently, there is an isomorphism of categories
  \begin{equation}\label{eq:prepn}
    \permcats(\P,A) \fto[\iso]{\e} \pinv{A}
  \end{equation}
  that is 2-natural with respect to strict symmetric monoidal functors $f\cn A \to B$.
\end{prop}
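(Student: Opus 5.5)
The plan is to chase the universal properties of the four pushouts in \cref{defn:ppushouts}, each combined with the free-forgetful isomorphism \cref{eq:SS-free-forget}. The guiding principle is that a 2-pushout in $\permcats$ induces, for each $A$, an isomorphism of categories
\[
  \permcats(D,A) \iso \permcats(C,A) \times_{\permcats(E,A)} \permcats(B,A)
\]
for a pushout $D$ of $C \leftarrow E \to B$. This holds on objects and also on 2-cells, i.e.\ monoidal transformations, by the 2-dimensional universal property noted in \cref{rmk:permcat-complete-cocomplete}. For a discrete or small category $C$, we have $\permcats(\SS C,A)\iso\cat(C,A)$. I will then identify the category $\permcats(\S_i,A)$ stage by stage.

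\textbf{Step 0.} We have $\permcats(\SS\{x,y\},A)\iso A\times A$: an object is a pair $(a,a')$ and a morphism is a pair $(f,f')$.

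\textbf{Step 1.} Since $\cat(\aisob,A)$ consists of isomorphisms $u\cong v$ in $A$, together with commuting squares as morphisms, the pushout \cref{eq:ppush-1} identifies $\permcats(\S_1,A)$ with the category of triples $(a,a',\epz)$, where $\epz\cn a+a'\iso 0$. Its morphisms are pairs $(f,f')$ with $\epz\circ(f+f')=\epz$, that is, the second square of \cref{eq:inv-obj-map}. This uses that the image of $f$ sends the 2-cell data at $b$ to $\id_0$, because strict functors and monoidal transformations preserve the unit strictly.

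\textbf{Step 2.} Similarly, \cref{eq:ppush-2} adds an isomorphism $\eta\cn 0\iso a'+a$ and the first square of \cref{eq:inv-obj-map}.

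\textbf{Steps 3 and 4.} A strict functor out of $\SS\atotob$ is a parallel pair of morphisms, and it factors through $\SS q$ exactly when the two morphisms are equal. The factorization is unique, since $q$ is surjective on morphisms and $\SS$ preserves this. On 2-cells, $\cat(\atob,A)\to\cat(\atotob,A)$ is fully faithful, since naturality squares for the single arrow are the same condition. Hence each of these pushouts cuts out the full subcategory where the corresponding triangle identity of \cref{eq:aa'-triang} holds. After Step 4, $\permcats(\P,A)$ is isomorphic to the category $\pinv{A}$ of \cref{defn:Ainv}.

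Define $\e(F)=(Fx,Fy,F\eta,F\epz)$, and send a monoidal transformation $\al$ to $(\al_x,\al_y)$. This is the composite of the isomorphisms above. For 2-naturality in strict $g\cn A\to B$, note that for strict functors the formulas \cref{eq:smf-map-inv-eta,eq:smf-map-inv-epz} reduce to $g\eta$ and $g\epz$. Therefore $\e(gF)=\pinv{g}(\e F)$, and the analogous statement holds on 2-cells.

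The main obstacle is the 2-cell level of Steps 1 and 2. One must check that a monoidal transformation between strict functors out of the pushout amounts exactly to a pair $(\al_x,\al_y)$ satisfying the square condition, with no extra data and no lost data. The key points are that the component at $0$ is forced to be the identity by the unit axiom \cref{eq:montransf}, and that components at sums are forced to be sums of components by monoidal naturality. I would state this once as a lemma: for strict $F,G\cn\SS C\to A$, monoidal transformations $F\To G$ correspond to natural transformations of the restrictions to $C$, by \cref{eq:SS-free-forget}. Steps 1 through 4 then apply it uniformly.
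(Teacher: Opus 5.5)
Your proposal is correct and follows essentially the same route as the paper. Both arguments rest on the 1- and 2-dimensional universal properties of the four pushouts in \cref{defn:ppushouts}, combined with \cref{eq:SS-free-forget}. The paper builds an explicit inverse $\einv$ stage by stage and checks both composites, while you package the same data as a chain of isomorphisms between $\permcats(\S_i,A)$ and iterated pullbacks of hom-categories. Your explicit check of 2-naturality (via $\eta^g = g\eta$ and $\epz^g = g\epz$ for strict $g$) fills in a point the paper leaves implicit. The only slip is notational: the Step 1 condition on morphisms should read $\epz_2\circ(f+f') = \epz_1$, where $\epz_1$ and $\epz_2$ are the counits of the source and target triples.
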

\begin{proof}
  Suppose $A$ is a permutative category and define a functor
  \[
    \e\cn \permcats(\P,A) \to \pinv{A}
  \]
  as follows.
  The construction of $\P$ as a sequence of pushouts in \cref{defn:ppushouts} shows that $x \in \P$ is the base object of an invertible pair $(x,y,\eta,\epz)$.
  The data for such an invertible pair is constructed in Steps 1 and 2 of \cref{defn:ppushouts}, and the axioms follows from Steps 3 and 4.
  Therefore, for any symmetric monoidal functor $h\cn \P \to A$, the quadruple $\bigl(h(x),h(y),h(\eta),h(\epz)\bigr)$ defines an invertible pair in $A$.
  Define
  \begin{equation}\label{eq:eh}
    \e(h) = \bigl( h(x), h(y), h(\eta), h(\epz) \bigr).
  \end{equation}
  Likewise, a monoidal transformation $\si\cn h \to k$ between symmetric monoidal functors $h,k\cn \P\to A$ defines a morphism of invertible pairs,
  \[
    (\si_x,\si_y)\cn \e(h) \to \e(k).
  \]
  Define
  \begin{equation}\label{eq:esi}
    \e(\si) = (\si_x,\si_y).
  \end{equation}
  This assignment on morphisms is functorial because identities and composites of monoidal transformations, and of invertible pairs, are determined componentwise (\cref{defn:Ainv,defn:montransf}).

  Now we define an inverse,
  \[
    \einv\cn \pinv{A} \to \permcats(\P,A)
  \]
  as follows.
  Suppose $\ul{a} = (a, a', \de, \ga)$ is an invertible pair in $A$.
  By construction of $\S_1$ as the pushout \cref{eq:ppush-1}, the counit $\ga\cn a+a' \iso 0$ in $A$ determines a unique strict symmetric monoidal functor $\S_1 \to A$ that sends the generating isomorphism of $\SS\aisob$ to $\ga$.
  Likewise, the isomorphism $\de\cn a'+a \iso 0$ in $A$ determines a unique strict symmetric monoidal functor $\S_2 \to A$ extending the previous construction and sending the generating isomorphism of $\SS\aisob$ to the unit isomorphism $\de$.
  The two triangle identities for $\de$ and $\ga$ then determine, in turn, unique extensions to $\S_3$ and finally $\P$.
  Define
  \begin{equation}\label{eq:einva}
    \einv(\ul{a})\cn \P \to A
  \end{equation}
  as the strict symmetric monoidal functor constructed in this way.

  The definition of $\einv$ on morphisms of $\pinv{A}$ uses the 2-dimensional aspect of the pushouts in \cref{defn:ppushouts}, as follows (recall \cref{rmk:permcat-complete-cocomplete}).
  Given a morphism of invertible pairs, $(f,f')\cn \ul{a} \to \ul{b}$, the counits for $\ul{a}$ and $\ul{b}$, respectively, determine two morphisms
  \begin{equation}\label{eq:two-morphisms}
    \S_1 \to A
  \end{equation}
  by the 1-dimensional universal property of the pushout \cref{eq:ppush-1}, as described above.
  Compatibility of $(f,f')$ with the counits \cref{eq:inv-obj-map} then implies, by the 2-dimensional aspect of the pushout \cref{eq:ppush-1}, that $(f,f')$ defines a monoidal transformation between the two morphisms \cref{eq:two-morphisms}.
  Continuing similarly through each of the pushouts defining $\P$, the pair $(f,f')$ determines a monoidal transformation from $\einv(\ul{a})$ to $\einv(\ul{b})$.
  Define
  \begin{equation}\label{eq:einvf}
    \einv(f,f')\cn \einv(\ul{a}) \to \einv(\ul{b})
  \end{equation} 
  to be this monoidal transformation.
  The composite $\e \circ \einv$ is the identity by construction.
  Uniqueness in the universality of each pushout shows that the composite $\einv \circ \e$ is the identity.
  Therefore the assignments \cref{eq:einva,eq:einvf} defining $\einv$ are inverse to those of $\e$, and hence $\einv$ is also functorial.
  This completes the proof that $\e$ is an isomorphism of categories.
\end{proof}

\begin{rmk}\label{rmk:pic-monadic}
  A permutative category in which every object is invertible is called a \term{Picard category} or \term{Picard groupoid} \cite{JO2012Modeling,Dug2014Coherence,GJO2019Topological,Bra2020Braided}.
  The construction of $\P$ and verification of its universality in \cref{defn:ppushouts,prop:prepn} are the essential steps in the construction of what one would call the free \term{coherent Picard category} 2-monad on $\cat$.
  We do not pursue such a line of research here, as our main goal can be achieved by working in the 2-category of permutative categories.
\end{rmk}

\begin{prop}\label{prop:pflex}
  The permutative category $\P$ constructed in \cref{defn:ppushouts} is flexible.
\end{prop}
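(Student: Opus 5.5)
By \cref{prop:cofibrant-flexible}, it suffices to show that $\P$ is cofibrant in the Lack model structure on $\permcats$ (\cref{prop:lack-model}). The plan is to follow the four pushouts of \cref{defn:ppushouts} and verify that each structure map $\SS\{x,y\} \to \S_1 \to \S_2 \to \S_3 \to \P$ is a cofibration. Cofibrancy then propagates along the chain by \cref{rmk:model-structure}~\cref{it:cof-cof}, since cofibrations compose.

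The starting object $\SS\{x,y\}$ is cofibrant for either of two reasons. It is free, hence flexible by \cref{prop:sf-imp-f}~\cref{it:free-flex}, and so cofibrant by \cref{prop:cofibrant-flexible}. Alternatively, $\SS\{x,y\}$ is the image under the left Quillen functor $\SS$ of the injective-on-objects functor $\varnothing \to \{x,y\}$. The latter is a cofibration in the canonical model structure on $\cat$ (\cref{defn:can-mod}), and $\SS\varnothing$ is initial in $\permcats$.

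Steps 1 and 2 are pushouts along $\SS p$, where $p\cn \anb \to \aisob$ is \cref{eq:anb-aisob}. Since $p$ is bijective on objects, it is a cofibration in $\cat$, and \cref{prop:lack-model} shows that $\SS p$ is a cofibration in $\permcats$. By \cref{rmk:model-structure}~\cref{it:cof-po}, the pushout maps $\SS\{x,y\} \to \S_1$ and $\S_1 \to \S_2$ are cofibrations. Steps 3 and 4 are pushouts along $\SS q$, where $q\cn \atotob \to \atob$ is \cref{eq:atotob-atob}. This $q$ is also identity on objects, hence injective on objects and a cofibration in the canonical model structure, even though it is not injective on morphisms. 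So $\SS q$ is a cofibration, and the maps $\S_2 \to \S_3$ and $\S_3 \to \P$ are cofibrations by the same pushout stability.

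Composing these cofibrations shows that $\SS\{x,y\} \to \P$ is a cofibration. Since $\SS\{x,y\}$ is cofibrant, $\P$ is cofibrant, and hence flexible by \cref{prop:cofibrant-flexible}.

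I expect the only delicate point to be Steps 3 and 4. There one must note that cofibrations in the canonical model structure on $\cat$ are determined solely by injectivity on objects. Identifying two parallel arrows via $q$ is therefore still a cofibration, even though it looks like a quotient. One should also confirm that the 1-categorical pushouts used in \cref{defn:ppushouts} are the pushouts in the underlying category $\permcats$ to which the model-category facts of \cref{rmk:model-structure} apply. They are, because 2-categorical (conical) pushouts in $\permcats$ are in particular ordinary pushouts.
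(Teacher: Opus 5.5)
Your proposal is correct and follows the paper's own proof. You show that $\SS\{x,y\}$ is cofibrant because free permutative categories are flexible, and that each of the four pushouts is taken along $\SS p$ or $\SS q$, which are cofibrations by \cref{prop:lack-model}; cofibrancy then propagates to $\P$, and \cref{prop:cofibrant-flexible} gives flexibility. Your explicit checks that $q$ is injective on objects and that the conical 2-pushouts are ordinary pushouts fill in details the paper leaves under ``the same argument, applied iteratively.''
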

\begin{proof}
  By \cref{prop:sf-imp-f}, the free permutative category $\SS \{ x, y \}$ is flexible and therefore cofibrant in the Lack model structure on $\permcats$ by \cref{prop:cofibrant-flexible}.
  Recalling \cref{eq:anb-aisob}, the strict symmetric monoidal functor $\SS p \cn \SS \anb \to \SS \aisob$ is a cofibration in the Lack model structure by \cref{prop:lack-model}.
  Since pushouts of cofibrations are cofibrations (\cref{rmk:model-structure}~\cref{it:cof-po}), the strict symmetric monoidal functor 
  \[
    \SS \{x,y \} \to \S_1
  \]
  in \cref{eq:ppush-1} is a cofibration, and therefore $\S_1$ is cofibrant (\cref{rmk:model-structure}~\cref{it:cof-cof}).
  The same argument, applied iteratively to the pushouts in \cref{eq:ppush-2,eq:ppush-3,eq:ppush-4}, shows that each of $\S_2$, $\S_3$, and $\P$ is cofibrant.
  Therefore $\P$ is flexible, once again by \cref{prop:cofibrant-flexible}.
\end{proof}

\begin{cor}\label{cor:pflex-equiv}
  The inclusion $j \cn \permcats \to \permcat$ induces equivalences
  \[
    \permcats(\P,A) \fto[\hty]{j} \permcat(\P,A)
  \] 
  for all permutative categories $A$.
  In particular, every symmetric monoidal functor $F \cn \P \to A$ is isomorphic, in $\permcat$, to a strict symmetric monoidal functor $\hat{F} \cn \P \to A$.
\end{cor}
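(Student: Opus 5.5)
This corollary is a direct consequence of flexibility, so the plan is to chain the results already established. By \cref{prop:pflex}, $\P$ is flexible. By \cref{prop:sf-imp-f}~\cref{it:flex-is-semiflex}, flexible algebras are semiflexible. Unwinding \cref{defn:semiflex} with $X = A$, semiflexibility of $\P$ says precisely that the functor induced by $j$,
\[
  \permcats(\P,A) \fto{j} \permcat(j\P, jA),
\]
is an equivalence of categories for every permutative category $A$. Since $j$ is the identity on 0-cells (\cref{notn:permcat}), this is the first claim.

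For the second claim, I will use that an equivalence of categories is essentially surjective. Given a symmetric monoidal functor $F \cn \P \to A$, regarded as an object of $\permcat(\P,A)$, there is therefore a strict symmetric monoidal functor $\hat F \in \permcats(\P,A)$ together with an isomorphism $j\hat F \iso F$ in $\permcat(\P,A)$. Morphisms of $\permcat(\P,A)$ are monoidal transformations, so this is an invertible monoidal transformation $\hat F \iso F$, which is the asserted isomorphism in $\permcat$.

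There is no real obstacle, since the substantive input, the cofibrancy argument giving flexibility, was already carried out in \cref{prop:pflex}. The one point to check is that the 2-naturality of \cref{eq:semiflex} plays no role here: only the objectwise equivalence at each $A$ is needed. If a more explicit description were wanted, I would obtain $\hat F$ as the strict functor $\P \to Qj\P \to A$ by composing a section $s$ of the surjective equivalence $\de_\P$ (\cref{defn:flex}) with the strict transpose of $F$ under \cref{eq:Qj-free-forget}. This explicit form is not needed for the statement.
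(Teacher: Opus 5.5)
Your proposal is correct and follows the paper's own argument: flexibility of $\P$ from \cref{prop:pflex}, then semiflexibility by \cref{prop:sf-imp-f}~\cref{it:flex-is-semiflex}, then \cref{defn:semiflex}. Your explicit appeal to essential surjectivity for the second claim simply spells out what the paper leaves implicit.
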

\begin{proof}
  \cref{prop:pflex} shows that $\P$ is flexible, and therefore semiflexible by \cref{prop:sf-imp-f}~\cref{it:flex-is-semiflex}.
  This completes the proof by \cref{defn:semiflex}.
\end{proof}

\begin{lem}\label{lem:pflex-id}
  Let $\P$ be the free permutative category on an invertible generator $x$. Suppose
  \[
    F \cn \P \to \P
  \]
  is a symmetric monoidal functor such that there is an isomorphism $F(x) \iso x$ in $\P$.
  Then there is a monoidal natural isomorphism $F \iso \id_\P$.
\end{lem}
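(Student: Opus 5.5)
We reduce to the strict case and then to the representability of \cref{prop:prepn}. First, by \cref{cor:pflex-equiv}, $F$ is isomorphic in $\permcat$ to a strict symmetric monoidal functor $\hat{F}\cn \P \to \P$, via a monoidal natural isomorphism $F \iso \hat{F}$. Its component at $x$ gives $\hat{F}(x) \iso F(x) \iso x$. It therefore suffices to prove the claim for strict $\hat{F}$, since monoidal natural isomorphisms compose.

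Next, apply the isomorphism of categories $\e\cn \permcats(\P,\P) \iso \pinv{\P}$ from \cref{prop:prepn}. We have
\[
  \e(\hat{F}) = \bigl(\hat{F}(x),\hat{F}(y),\hat{F}(\eta),\hat{F}(\epz)\bigr)
  \andspace
  \e(\id_\P) = (x,y,\eta,\epz).
\]
A morphism $\e(\hat F)\to \e(\id_\P)$ in $\pinv{\P}$ corresponds, under the inverse $\einv$, to a monoidal transformation $\hat F \To \id_\P$. By \cref{lem:iso-to-mapofinv}, the given isomorphism $f\cn \hat{F}(x)\iso x$ extends uniquely to a map of invertible pairs $(f,f^\dagger)$. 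By \cref{lem:Ainv-gpd}, both components are isomorphisms, so $(f,f^\dagger)$ is an isomorphism in $\pinv{\P}$. Since $\einv$ is a functor, $\einv(f,f^\dagger)$ is an invertible monoidal transformation $\hat{F} \iso \id_\P$.

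The main point needing care is the strictification step: \cref{prop:prepn} is stated only for strict functors, so the flexibility of $\P$ (via \cref{cor:pflex-equiv}) is essential. We must also confirm that the isomorphism $F\iso\hat F$ supplied there is a \emph{monoidal} isomorphism in $\permcat$. This holds because the equivalence is one of hom-categories of $\permcat$, whose morphisms are monoidal transformations. Finally, the composite $F \iso \hat{F} \iso \id_\P$ is the required monoidal natural isomorphism.
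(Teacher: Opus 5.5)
Your proposal is correct and follows the paper's proof essentially step for step: strictify $F$ to $\hat{F}$ via \cref{cor:pflex-equiv}, extend the induced isomorphism $\hat{F}(x)\iso x$ to an isomorphism of invertible pairs by \cref{lem:iso-to-mapofinv}, transport it back through the isomorphism $\e$ of \cref{prop:prepn}, and compose $F\iso\hat{F}\iso\id_\P$. Your appeal to \cref{lem:Ainv-gpd} is harmless but redundant, since \cref{lem:iso-to-mapofinv} already produces $f^\dagger$ as an isomorphism.
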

\begin{proof}
  By \cref{cor:pflex-equiv}, there is a monoidal natural isomorphism $F \iso \hat{F}$ with $\hat{F} \cn \P \to \P$ strict symmetric monoidal.
  Let $w = \hat{F}(x)$.
  Evaluation at $x$ gives a sequence of isomorphisms in $\P$
  \begin{equation}\label{eq:hatx-Fx-x}
  w = \hat{F}(x) \iso F(x) \iso x.
  \end{equation}
  By \cref{prop:prepn}, $\hat{F}$ corresponds to a unique invertible pair $\ul{w} \in \pinv{\P}$ with base object $w$, and hence \cref{eq:hatx-Fx-x} extends to an isomorphism of invertible pairs $\ul{w} \iso \ul{x}$ in $\pinv{\P}$ by \cref{lem:iso-to-mapofinv}.
  Therefore, we have $\hat{F} \iso \id_\P$ by \cref{prop:prepn}.
  The result then follows by composing the monoidal natural isomorphisms $F \iso \hat{F} \iso \id_\P$.
\end{proof}

\section{The super integers}
\label{sec:superZ}

Here we define and study a permutative category $\Z$ that is a skeletal version of Kapranov's \emph{skeleton of the sign rule}, $1\mh\text{Ab}^{\mathbbm{Z}}$ \cite[Section~3.1]{Kap2021Supergeometry}.
Instead of defining $\Z$ by reference to graded abelian groups, we do so purely algebraically: the objects of $\Z$ are integers, and the only morphisms are automorphisms given by abstract signs $\pm 1_k$.
The rest of the structure of $\Z$ as a permutative category is defined using simple arithmetic of integers, and every object $k$ of $\Z$ is part of a canonical invertible pair $\ul{k}$ \cref{eq:ul-k}.

The main result of this section, \cref{thm:ZZ-equiv-P1}, shows that the invertible pair $\ul{1}$ induces a symmetric monoidal \emph{equivalence} $K \cn \P \to \Z$.
Suggestions of a 2-categorical approach to such a result have been made at various times---including by the authors---but such work has not appeared in the literature to the authors' knowledge.
The utility of such an equivalence $K$ will be explained in \cref{sec:z-comp-2}, where we use it to simplify a variety of computations involving morphisms between invertible objects.

\begin{defn}\label{defn:super-integers}
  The category of \term{super integers}, $\Z$, is defined as follows.
  The set of objects $\ob\Z$ is the set of integers, $\ZZ$.
  For each pair of integers $k$ and $j$, the morphism set $\Z(k,j)$ is empty if $k \neq j$ and $\Z(k,k) = \ZZ^\times$, the multiplicative group of order two, with composition given by multiplication.

  For clarity, we write subscripts on morphisms of $\Z$ to indicate their co/domain, so 
  \begin{equation}\label{eq:Zkk}
    \Z(k,k) = \{ 1_k, -1_k \} \forspace k \in \ob\Z = \ZZ.
  \end{equation}
\end{defn}

\begin{defn}\label{prop:zz-piccat}
  Addition induces a permutative structure on the super integers $\Z$ as follows.
  \begin{description}
  \item[Monoidal sum] The monoidal sum on $\Z$ is given by addition of integers and the formula
    \begin{equation}\label{eq:muknuj}
      \mu_{k} + \nu_{j} = (\mu \cdot \nu)_{k+j}
    \end{equation}
    on morphisms, where $\mu \cdot \nu$ is the product in $\ZZ^\times$.
  \item[Unit and associativity] The unit object is $0$, and the unit and associativity isomorphisms are all the identity.
  \item[Symmetry] The symmetry isomorphism is given by the \term{graded sign rule} 
    \begin{equation}\label{eq:Zbetakj}
      \beta_{k,j} = \big( (-1)^{jk} \big)_{k+j} \cn k+j \to j+k.
    \end{equation}
  \end{description}
  Functoriality of addition with respect to composition in $\Z$ follows from commutativity of multiplication in $\ZZ^\times$.
  With this permutative structure, each object $k$ is part of a canonical invertible pair,
  \begin{equation}\label{eq:ul-k}
    \ul{k} = (k, -k, \id, \id).
  \end{equation}
\end{defn}

\begin{defn}\label{defn:ochi}
  Let $(A, +, 0, \beta)$ be a permutative category with an invertible pair $\ul{a} = (a,a', \eta, \epz)$. 
  Define a symmetric monoidal functor 
  \[
    \ochi_a \cn \Z \to A
  \]
  as follows.
  \begin{description}
  \item[Object assignment]
    For $k \in \ZZ$, we define
    \begin{equation}\label{eq:ochi-k}
      \ochi_a(k) = 
      \begin{cases}
        k \cdot a & \text{if $k>0$,}\\
        0 & \text{if $k=0$,} \\
        (-k) \cdot a' & \text{if $k < 0$.}
      \end{cases}
    \end{equation} 
  \item[Morphism assignment]
    The only non-identity morphisms of $\Z$ are $-1_k$ for each integer $k$.
    Using \cref{notn:8}, we define 
    \begin{equation}\label{eq:ochia-minusone}
      \ochi_a(-1_k) = \mathtt{8}_a + \id_{\ochi_a(k)}.
    \end{equation}
  \end{description}
  Functoriality of $\ochi_a$ follows from that of $+$ and the 2-Torsion of $\fet$ (\cref{lem:64=1}):
  \begin{align*}
    \ochi_a(-1_k) \circ \ochi_a(-1_k)
    & = \big( \mathtt{8}_a + \id_{\ochi_a(k)} \big) \circ \big( \mathtt{8}_a + \id_{\ochi_a(k)} \big) \\
    & = \big( \mathtt{8}_a \circ \mathtt{8}_a \big) + \big( \id_{\ochi_a(k)}  \circ  \id_{\ochi_a(k)} \big) \\
    & = \id_0 + \id_{\ochi_a(k)} \\
    & = \id_{\ochi_a(k)}\\
    & = \ochi_a( -1_k \circ -1_k).
  \end{align*}
  
  Next we equip $\ochi_a$ with the structure of a monoidal functor.
  \begin{description}
  \item[Unit constraint] Define
    \begin{equation}\label{eq:ochia-0}
      (\ochi_a)_0 = \id \cn 0 \to 0 = \ochi_a(0).
    \end{equation}
  \item[Monoidal constraint]
    We define components of $(\ochi_a)_2$ with a combination of cases and induction.
    \begin{description}
    \item[For $k,j \geq 0$]
      \begin{equation}\label{eq:ochia-2-pos}
        (\ochi_a)_{2;k,j} = \id_{(k+j)\cdot a} \cn
        \overbrace{a + \cdots + a}^{\text{$k$ copies}}
        + \overbrace{a + \cdots + a}^{\text{$j$ copies}}
        \fto{=}
        \overbrace{a + \cdots + a}^{\text{$k + j$ copies}}.
      \end{equation}
      \vspace{.25pc}
    \item[For $k,j \leq 0$]
      \begin{equation}\label{eq:ochia-2-neg}
        (\ochi_a)_{2;k,j} = \id_{(-k-j)\cdot a'} \cn
        \overbrace{a' + \cdots + a'}^{\text{$-k$ copies}}
        + \overbrace{a' + \cdots + a'}^{\text{$-j$ copies}}
        \fto{=}
        \overbrace{a' + \cdots + a'}^{\text{$-k - j$ copies}}.
      \end{equation}
      \vspace{.25pc}
    \item[For $k > 0,\;j < 0$] $(\ochi_a)_{2;k,j}$ is defined inductively as the following composite.
      \begin{equation}\label{eq:ochia-2-mix1}
        \begin{tikzpicture}[x=10ex,y=9ex,vcenter]
          \draw[0cell] 
          (0,0) node (a) {k \cdot a + (-j)\cdot a'}
          (a)++(0,-1) node (b) {(k-1)\cdot a + (-j-1)\cdot a'}
          (b)++(3.5,0) node (c) {(k+j)\cdot a}
          ;
          \draw[1cell] 
          (a) edge['] node {(k-1)\cdot a + \epz + (-j-1) \cdot a'} (b)
          (b) edge node {(\ochi_a)_{2;k-1,j+1}} (c)
          ;
        \end{tikzpicture}
      \end{equation}
      So, in this case $(\ochi_a)_{2;k,j}$ is a composite of $\min{(k, |j|)}$ morphisms of the form $\id + \epz + \id$.
      \vspace{.25pc}
    \item[For $k < 0,\;j > 0$] $(\ochi_a)_{2;k,j}$ is defined inductively as the following composite.
      \begin{equation}\label{eq:ochia-2-mix2}
        \begin{tikzpicture}[x=10ex,y=9ex,vcenter]
          \draw[0cell] 
          (0,0) node (a) {(-k) \cdot a' + j\cdot a}
          (a)++(0,-1) node (b) {(-k - 1) \cdot a' + (j-1)\cdot a}
          (b)++(3.5,0) node (c) {(k+j)\cdot a}
          ;
          \draw[1cell] 
          (a) edge['] node {(-k-1)\cdot a' + \eta^{-1} + (j-1) \cdot a} (b)
          (b) edge node {(\ochi_a)_{2;k+1,j-1}} (c)
          ;
        \end{tikzpicture}
      \end{equation}
      So, in this case, $(\ochi_a)_{2;k,j}$ is a composite of $\min{(|k|, j)}$ morphisms of the form $\id + \eta^\inv + \id$.
    \end{description}
  \end{description} 
  Naturality of the monoidal constraints with respect to the non-identity morphisms $-1_k$ follows from the definition \cref{eq:ochia-minusone} and the Eckmann-Hilton \cref{lem:EH} \cref{it:EH1}:
  the following diagram commutes because $\fet_a$ is an automorphism of the unit $0 \in A$.
  \[
    \begin{tikzpicture}[x=24ex,y=8ex]
      \draw[0cell] 
      (0,0) node (a) {\ochi_a(k)+\ochi_a(j)}
      (1,0) node (b) {\ochi_a(k+j)}
      (1,-1) node (c) {\ochi_a(k+j)}
      (0,-1) node (d) {\ochi_a(k)+ \ochi_a(j)}
      ;
      \draw[1cell] 
      (a) edge node {(\ochi_a)_{2;k,j}} (b)
      (b) edge[] node {\mathtt{8}_a + \id} (c)
      (a) edge[swap] node {\mathtt{8}_a + \id + \id} (d)
      (d) edge[swap] node {(\ochi_a)_{2;k,j}} (c)
      ;
    \end{tikzpicture}
  \]
  Naturality with respect to $-1_j$ is similar.
  The symmetric monoidal functor axioms \cref{eq:smfunctor} for $\ochi_a$ are verified in \cref{prop:ochi-mon}.
\end{defn}
\begin{rmk}\label{rmk:ochia-warn}
  Note that the definition of $\ochi_a$ depends implicitly on all the data of the invertible pair $\ul{a} = (a, a', \eta, \epz)$.
  One can show, as a special case of \cref{prop:ka-mon-trans} below, that different choices of invertible pair data with the same base object $a$ will result in constructions $\ochi$ that are isomorphic, but generally not equal, as symmetric monoidal functors.
\end{rmk}

Recall from \cref{notn:braces} that $\S$ denotes the free permutative category generated by an object $x$.
Furthermore, for an object $a$ of a permutative category $A$, \cref{notn:chi} defines $\chi_a\cn \S \to A$ as the unique strict symmetric monoidal functor with $\chi_a(x) = a$.
\begin{prop}\label{prop:ochi-mon}
  In the context of \cref{defn:ochi}, $\ochi_a$ is a symmetric monoidal functor such that the following diagram commutes.
  \begin{equation}\label{eq:ochi-triangle}
    \begin{tikzpicture}[x=14ex,y=5ex,vcenter]
      \draw[0cell] 
      (0,0) node (a) {\zS}
      (2,0) node (b) {A}
      (1,-1) node (c) {\Z}
      ;
      \draw[1cell] 
      (a) edge node {\chi_a} (b)
      (a) edge[swap] node {\chi_1} (c)
      (c) edge[swap] node {\ochi_a} (b)
      ;
    \end{tikzpicture}
  \end{equation}
\end{prop}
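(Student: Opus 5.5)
We need to verify the unit axioms, the associativity axiom, the symmetry axiom for $\ochi_a$, and then the triangle \cref{eq:ochi-triangle}. The unit axioms are immediate: $(\ochi_a)_0$ is the identity, and the components $(\ochi_a)_{2;0,k}$ and $(\ochi_a)_{2;k,0}$ are identities by \cref{eq:ochia-2-pos,eq:ochia-2-neg}, since no $\epz$ or $\eta^\inv$ steps occur when one argument is zero.

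For the triangle, $\chi_1\cn \S \to \Z$ sends $n\cdot x$ to $n$. It sends a morphism $[\sigma,1]$ to $(\mathrm{sgn}\,\sigma)_n$, because $\chi_1$ is strict and $\beta_{1,1} = -1_2$. On objects $\ochi_a\chi_1(n\cdot x) = n\cdot a = \chi_a(n\cdot x)$. Both composites are strict on the image of $\S$: the monoidal constraints of $\ochi_a$ are identities for nonnegative arguments. So equality of the composites reduces to checking them on the generating symmetries $\id + \beta_{x,x} + \id$. Here $\chi_a$ gives $\id + \beta_{a,a} + \id$, while $\ochi_a\chi_1$ gives $\fet_a + \id_{n\cdot a}$. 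These agree by \cref{lem:betaaa=8aaa}~\cref{it:betaaa=8aaa} together with the Eckmann--Hilton \cref{lem:EH}. Since every morphism of $\S$ is a composite of such generators, the triangle commutes.

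For associativity, the plan is to view every component $(\ochi_a)_{2;k,j}$ as a cancellation composite built from whiskered $\epz$'s and $\eta^\inv$'s. Both sides of the associativity axiom at $(k,j,l)$ are then composites from $\ochi_a(k)+\ochi_a(j)+\ochi_a(l)$ to $\ochi_a(k+j+l)$ that cancel adjacent $a,a'$ pairs in different orders. I will prove a ``cancellation independence'' lemma: any two ways of cancelling adjacent $a a'$ or $a' a$ pairs from a word to its reduced form agree. This is an induction on length, reducing to two local moves. Disjoint cancellations commute by functoriality of $+$. Overlapping cancellations, such as in $a a' a$ cancelled via $\epz a$ versus $a\eta^\inv$, agree by the triangle identities \cref{eq:aa'-triang}, after rewriting $\eta^\inv$ through $(\eta^\inv)$-versions of those identities. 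Associativity then follows by a case split on the signs of $k,j,l$, inducting on $\min$'s as in the definition.

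The symmetry axiom is the main obstacle. We must show $(\ochi_a)_{2;j,k}\circ\beta_{\ochi_a k,\ochi_a j} = (\fet_a^{jk} + \id)\circ(\ochi_a)_{2;k,j}$. For $k,j$ of the same sign this is the image of the triangle, together with \cref{it:betaa'a'=8aa'a'}: $\beta_{k\cdot a, j\cdot a}$ is a block permutation of sign $(-1)^{jk}$, and \cref{lem:even-perms} with \cref{lem:betaaa=8aaa} convert it to $\fet_a^{jk}$. For mixed signs I would induct on $\min(|k|,|j|)$. Use the second axiom of \cref{eq:symm-axioms} to split off one $a$ and one $a'$, and naturality of $\beta$ to move the cancelled pair past the rest. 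This reduces the problem to the base case $\epz\circ\beta_{a',a} \overset{?}{=} \fet_a\circ\eta^\inv$, which is exactly the definition of $\fet_a$ \cref{notn:8}. The remaining bookkeeping of signs uses $\fet_a^2 = \id$ (\cref{lem:64=1}), $\fet_a = \fet_{a'}$, and the parity identity $(-1)^{jk} = (-1)^{(j+1)(k-1)}\cdot(-1)^{\dots}$, which should be checked carefully. The cancellation independence lemma lets me choose whichever cancellation order is convenient for that bookkeeping.
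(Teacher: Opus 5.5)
Your unit axioms, your direct check of the triangle on adjacent transpositions (which keeps it independent of the symmetry axiom), your associativity argument via cancellation independence (a valid confluence-style replacement for the paper's reduction to $(1,-1,1)$ and $(-1,1,-1)$), and the same-sign symmetry cases are all fine. The gap is the mixed-sign symmetry axiom: the induction on $\min(|k|,|j|)$ you describe does not close.

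Suppose you split $\beta_{k\cdot a,\,(-j)\cdot a'}$ with the hexagon axiom \cref{eq:symm-axioms} by peeling off one $a$ and one $a'$. You get $\beta_{a,a'}$ and $\beta_{(k-1)\cdot a,\,(-j-1)\cdot a'}$, but also the cross terms $\beta_{(k-1)\cdot a,\,a'}$ and $\beta_{a,\,(-j-1)\cdot a'}$. These are the mixed-sign instances $(k-1,-1)$ and $(1,j+1)$. Neither your hypothesis at $(k-1,j+1)$ nor the single base case $\epz\circ\beta_{a',a}=\fet_a\circ\eta^{-1}$ covers them, and they are exactly your unresolved factor, which must be $(-1)^{k+j+1}$.

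Nor can naturality of $\beta$ simply slide the cancelled pair along. The map $\beta_{k\cdot a,(-j)\cdot a'}$ sends the innermost $a\,a'$, which $(\ochi_a)_{2;k,j}$ cancels first, to the two ends of $(-j)\cdot a' + k\cdot a$. But $(\ochi_a)_{2;j,k}$ never cancels those two against each other unless $k=-j$. Already $(k,j)=(2,-1)$, which has $\min(|k|,|j|)=1$, is not the definition of $\fet_a$. It needs the hexagon, a triangle identity, and two uses of $\eta^{-1}\circ\beta_{a,a'}=\fet_a\circ\epz$.

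The missing idea is that, once $\ochi_a$ is known to be monoidal, the symmetry square is additive in each variable. If it commutes at $(k,i)$ and $(j,i)$, it commutes at $(k+j,i)$, and likewise in the second variable. This follows from the hexagon axiom in both $\Z$ and $A$, the associativity axiom for $(\ochi_a)_2$, and naturality of $(\ochi_a)_2$ and of $\beta$. Since every integer is a sum of copies of $1$ or of $-1$, it suffices to check $k,j\in\{\pm1\}$. There $\beta_{k,j}=-1_{k+j}$, and the four squares are \cref{it:betaaa=8aaa}, \cref{it:betaa'a'=8aa'a'}, the definition \cref{notn:8} of $\fet_a$, and that definition combined with $\fet_a^{-1}=\fet_a$ from \cref{lem:64=1}. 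This is the paper's argument. It removes the induction on $\min(|k|,|j|)$ and all sign bookkeeping, and it makes your separate same-sign argument unnecessary.
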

\begin{proof}
  The associativity axiom of \cref{eq:smfunctor} for $\ochi_a$ is commutativity of the following diagram for integers $k$, $j$, and $i$.
  \[
    \begin{tikzpicture}[x=36ex,y=8ex]
      \draw[0cell] 
      (0,0) node (a) {\ochi_a(k)+\ochi_a(j) + \ochi_a(i)}
      (1,0) node (b) {\ochi_a(k+j)+ \ochi_a(i)}
      (1,-1) node (c) {\ochi_a(k+j+i)}
      (0,-1) node (d) {\ochi_a(k)+ \ochi_a(j+i)}
      ;
      \draw[1cell] 
      (a) edge node {(\ochi_a)_{2;k,j} + \ochi_a(i)} (b)
      (b) edge[] node {(\ochi_a)_{2;k+j,i}} (c)
      (a) edge[swap] node {\ochi_a(k) + (\ochi_a)_{2;j,i}} (d)
      (d) edge[swap] node {(\ochi_a)_{2;k,j+i}} (c)
      ;
    \end{tikzpicture}
  \]
  If any of $k$, $j$, or $i$ is zero, then commutativity is immediate.
  If any consecutive two of $k, j, i$ are both positive or both negative, then one side of the above diagram is the identity, and the resulting triangle commutes by the inductive definitions \cref{eq:ochia-2-mix1,eq:ochia-2-mix2}.
  In the remaining case, where $j$ has the opposite sign of $k$ and $i$, induction reduces the verification to $(k,j,i)$ being either $(1,-1,1)$ or $(-1,1,-1)$.
  Each of those cases is equivalent, via invertibility of $\eta$, to the triangle identities \cref{eq:aa'-triang} for $\ul{a}$.
The two unit axioms of \cref{eq:smfunctor} hold trivially for $\ochi_a$.
  Thus, $\ochi_a$ is a monoidal functor.

  The symmetry axiom for $\ochi_a$ is commutativity of the following diagram for each pair of integers $k$ and $j$.
  \begin{equation}\label{eq:ochia-symm}
    \begin{tikzpicture}[x=24ex,y=8ex,vcenter]
      \draw[0cell] 
      (0,0) node (a) {\ochi_a(k)+\ochi_a(j)}
      (1,0) node (b) {\ochi_a(k+j)}
      (1,-1) node (c) {\ochi_a(j+k)}
      (0,-1) node (d) {\ochi_a(j)+ \ochi_a(k)}
      ;
      \draw[1cell] 
      (a) edge node {(\ochi_a)_{2;k,j}} (b)
      (b) edge[] node {\ochi_a(\beta)} (c)
      (a) edge[swap] node {\beta} (d)
      (d) edge[swap] node {(\ochi_a)_{2;j,k}} (c)
      ;
    \end{tikzpicture}
  \end{equation}
  Since the above arguments show that $\ochi_a$ is a monoidal functor, we reduce to checking the square above when $k,j \in \{ \pm 1 \}$.
  Note, from \cref{eq:Zbetakj}, that $\beta_{k,j} = -1_{k+j}$ in each of these four cases.
  In the two cases $k = j$, the square above commutes by \cref{it:betaaa=8aaa,it:betaa'a'=8aa'a'}.
  In the other two cases, the square above commutes by definition of $\fet_a$ \cref{notn:8} and (for $k = 1$, $j=-1$) the equality $\fet_a^\inv = \fet_a$ from the 2-Torsion of $\fet$ (\cref{lem:64=1}).
  This concludes the proof that $\ochi_a$ is a symmetric monoidal functor.

  For commutativity of the triangle \cref{eq:ochi-triangle}, note that the monoidal constraints $(\ochi_a)_{2; k,j}$ are identities for $k,j \geq 0$.
  Thus, the composite $\ochi_a \circ \chi_1$ is strict symmetric monoidal.
  Commutativity of \cref{eq:ochi-triangle} therefore follows from uniqueness of $\chi_a$ in \cref{notn:chi} and the computation
  \[
    \bigl( \ochi_a \circ \chi_1 \bigr)(x)
    = \ochi_a(1) = a = \chi_a(x).
  \]
\end{proof}

\begin{lem}\label{lem:F-ochi-pos-k}
  Let $(A, +, 0, \beta)$ be a permutative category with an invertible pair $(a,a', \eta, \epz)$.
  Suppose that $F \cn \Z \to A$ is a symmetric monoidal functor such that the  following triangle commutes.
  \begin{equation}\label{eq:F-pos-k-triangle}
    \begin{tikzpicture}[x=14ex,y=5ex,vcenter]
      \draw[0cell] 
      (0,0) node (a) {\zS}
      (2,0) node (b) {A}
      (1,-1) node (c) {\Z}
      ;
      \draw[1cell] 
      (a) edge node {\chi_a} (b)
      (a) edge[swap] node {\chi_1} (c)
      (c) edge['] node[pos=.5] (F) {F} (b)
      ;
    \end{tikzpicture}
  \end{equation}
  Then the following three statements hold.
  \begin{enumerate}
  \item $F$ agrees with $\ochi_a$ on non-negative integers:
  \begin{equation}\label{eq:Fk-kge0-lem}
    F(k) = \ochi_a(k) = k \cdot a \forspace k \ge 0.
  \end{equation}
  \item The unit and monoidal constraints of $F$ and $\ochi_a$ agree at non-negative integers:
  \begin{equation}\label{eq:F0-F2-kge0-lem}
    \begin{aligned}
      F_0 & = \id_0 = (\ochi_a)_0 \andspace\\
      F_{2; k,j} & = \id_{k\cdot a + j \cdot a} = (\ochi_a)_{2;k,j}
    \forspace k,j \ge 0.
    \end{aligned}
  \end{equation}
  \item The assignment on non-trivial morphisms is given by
  \begin{equation}\label{eq:F-on-odd-morphism-lem}
    F(-1_k) = \fet_a + \id_{F(k)} \forspace k \in \ZZ.
  \end{equation}
  \end{enumerate}
\end{lem}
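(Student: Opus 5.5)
Statements (1) and (2) come from unwinding the strict equality $F \circ \chi_1 = \chi_a$ as symmetric monoidal functors. Here $\chi_1$ is strict with $\chi_1(n\cdot x) = n$, and $\chi_a$ is strict with $\chi_a(n\cdot x) = n\cdot a$. For a composite of monoidal functors, the constraints are $(F\circ\chi_1)_0 = F(\chi_{1,0})\circ F_0 = F_0$ and
\[
  (F\circ\chi_1)_{2;u,v} = F(\chi_{1;2;u,v})\circ F_{2;\chi_1 u,\chi_1 v} = F_{2;\chi_1 u,\chi_1 v}.
\]
Comparing objects, $F(k) = F(\chi_1(k\cdot x)) = \chi_a(k\cdot x) = k\cdot a = \ochi_a(k)$ for every $k \ge 0$. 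Comparing unit constraints gives $F_0 = \id_0$. Comparing monoidal constraints at $u = k\cdot x$ and $v = j\cdot x$ gives $F_{2;k,j} = \id$. The matching statements for $\ochi_a$ hold by definition, via \cref{eq:ochia-0,eq:ochia-2-pos}. This settles (1) and (2).

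For (3), I would first treat $k = 2$. In $\S$, the symmetry $\beta_{x,x}$ maps under $\chi_1$ to $\beta_{1,1} = -1_2$ by \cref{eq:Zbetakj}, and under $\chi_a$ to $\beta_{a,a}$. Hence $F(-1_2) = \beta_{a,a}$, and \cref{lem:betaaa=8aaa} rewrites this as $\fet_a + 2\cdot a$.

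Next I would show that $F(-1_0) = \fet_a$. Since $-1_2 = -1_0 + 1_2$ in $\Z$ and $F_{2;0,2}$ is an identity by (2), naturality of $F_2$ gives $F(-1_2) = F(-1_0) + \id_{2\cdot a}$. So $F(-1_0) + 2\cdot a = \fet_a + 2\cdot a$. The map $(2\cdot a)+\qmarg$ is injective on $A(0,0)$ because $a$ is invertible, by \cref{rmk:not-smf}. Using strictness of the unit, this gives $F(-1_0) = \fet_a$.

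Finally, for an arbitrary $k \in \ZZ$, I would write $-1_k = -1_0 + 1_k$ and apply naturality of $F_2$ at the pair $(0,k)$:
\[
  F(-1_k)\circ F_{2;0,k} = F_{2;0,k}\circ\bigl(F(-1_0) + \id_{F(k)}\bigr).
\]
By (2), $F_0 = \id$, so the unit axiom of \cref{eq:smfunctor} makes $F_{2;0,k}$ an identity. Therefore $F(-1_k) = \fet_a + \id_{F(k)}$, and this argument does not need $k \ge 0$.

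The main obstacle is that the morphism $-1_k$ is not in the image of $\chi_1$ when $k$ is odd or negative. I plan to get around this by first pinning down $F(-1_0)$ through the case $k = 2$ and the cancellation step, and then transporting that value to all $k$ using the monoidal constraint at $(0,k)$.
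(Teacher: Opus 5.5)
Your proof is correct. For (1), (2), and the last step of (3) it coincides with the paper's argument. That last step writes $-1_k = -1_0 + 1_k$ and uses naturality of $F_2$ at $(0,k)$, where $F_{2;0,k}$ is an identity by the unit axiom.

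Where you differ is in identifying $F(-1_0)$.
\begin{itemize}
\item The paper observes $\fet_1 = -1_0$ in $\Z$ and applies \cref{lem:feightone-equals-eightone}. That lemma rests on the symmetry axiom of $F$ for $\beta_{-1,1}$, and it identifies $F(\fet_1)$ with the Figure Eight of the transported pair $F\ul{1}$. \Cref{lem:8-invariant} then replaces that Figure Eight by $\fet_a$.
\item You instead evaluate the triangle on $\beta_{x,x}$, which amounts to the symmetry axiom at $(1,1)$. This gives $F(-1_2) = \beta_{a,a} = \fet_a + 2\cdot a$ by \cref{lem:betaaa=8aaa}, and you then cancel $2\cdot a$.
\end{itemize}
Your route never touches $F(-1)$, the pair $F\ul{1}$, or the invariance of $\fet$ under change of inverse data. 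The price is a cancellation step and reliance on the strict triangle. The paper's route is more conceptual. In fact it proves (3) for any symmetric monoidal $F\cn \Z \to A$ with $F(1) \iso a$, because the unit-axiom rows of its diagram absorb $F_0$.

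There are two small corrections.
\begin{itemize}
\item The cancellation step is not justified as written. You cite \cref{rmk:not-smf} for left translation $(2\cdot a)+\qmarg$, but your equation $F(-1_0) + \id_{2\cdot a} = \fet_a + \id_{2\cdot a}$ has $2\cdot a$ on the right, and strictness of the unit alone does not move a scalar from one side to the other. Either argue faithfulness of right translation $\qmarg + 2\cdot a$ directly, as in \cref{rmk:inverse-equivs} but with $\id + \epz$ and $\id + \eta^{-1}$. Or first move the scalar across using naturality of $\beta$ with $\beta_{0,z} = \id_z$.
\item In your closing remark, $-1_k$ does lie in the image of $\chi_1$ for every $k \ge 2$, odd $k$ included; for example $\chi_1(\beta_{x,x} + \id_x) = -1_3$. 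The cases out of reach are $k \in \{0,1\}$ and $k < 0$.
\end{itemize}
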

\begin{proof}
  Each of the assertions in the statement depends on commutativity of \cref{eq:ochi-triangle,eq:F-pos-k-triangle}.
  For \cref{eq:Fk-kge0-lem}, observe that $\chi_1$ is surjective on the non-negative integers $k \in \Z$.
  So, for $k \ge 0$ we have
  \[
    F(k) = F(\chi_1(k \cdot x)) = \chi_a(k \cdot x)  = k \cdot a = \ochi_a(k).
  \]

  For \cref{eq:F0-F2-kge0-lem}, observe that $F\circ \chi_1 = \chi_a$ as monoidal functors, with both $\chi_1$ and $\chi_a$ strict monoidal.
  This shows that the unit constraint of $F$ is an identity, and that the monoidal constraints of $F$ at non-negative integers are also identities, agreeing with \cref{eq:ochia-0,eq:ochia-2-pos} as desired.

  We obtain \cref{eq:F-on-odd-morphism-lem} as follows.
  Using the Figure Eight \cref{notn:8} for the canonical invertible pair $\ul{1}$ from \cref{eq:ul-k}, and the symmetry \cref{eq:Zbetakj} with the monoidal sum of morphisms \cref{eq:muknuj} in $\Z$, we obtain the following for each integer $k \in \ob \Z = \ZZ$:
  \begin{equation}\label{eq:8=-1}
    \fet_1 = -1_0
    \andspace
    \fet_1 + \id_k = -1_k
    \inspace \Z.
  \end{equation}
  Then, the monoidal unity axioms \cref{eq:smfunctor} for $F$ imply that the composites along the top and bottom in the following diagram are identities.
  \[
    \begin{tikzpicture}[x=20ex,y=10ex]
      \draw[0cell] 
      (0,0) node (a) {F(k)}
      (a)++(9ex,0) node (b) {0+F(k)}
      (b)++(1.25,0) node (c) {F(0)+F(k)}
      (c)++(1,0) node (d) {F(0+k)}
      (d)++(9ex,0) node (e) {F(k)}
      (a)++(0,-1) node (a') {F(k)}
      (b)++(0,-1) node (b') {0+F(k)}
      (c)++(0,-1) node (c') {F(0)+F(k)}
      (d)++(0,-1) node (d') {F(0+k)}
      (e)++(0,-1) node (e') {F(k)}
      ;
      \draw[1cell]
      (a) edge[equal] node {} (b)
      (a') edge[equal] node {} (b')
      (d) edge[equal] node {} (e)
      (d') edge[equal] node {} (e')
      (b) edge node {F_0 + \id_{F(k)}} (c)
      (c) edge node {F_{2;0,k}} (d)
      (b') edge['] node {F_0 + \id_{F(k)}} (c')
      (c') edge['] node {F_{2;0,k}} (d')
      (b) edge['] node {\fet_a + \id_{F(k)}} (b')
      (c) edge['] node {F(\fet_1) + F(\id_{k})} (c')
      (d) edge['] node {F(\fet_1 + \id_k)} (d')
      (e) edge node {F(-1_k)} (e')
      ;
    \end{tikzpicture}\vspace{-1pc} 
  \] 
    The square at left commutes by \cref{lem:feightone-equals-eightone}, recalling that $\fet_{F(1)} = \fet_a$ depends only on the isomorphism class of $a$, by \cref{lem:8-invariant}.
  The middle square commutes by naturality of $F_2$.
  The right square commutes by the second equation in \cref{eq:8=-1}.
  Thus we have \cref{eq:F-on-odd-morphism-lem} for any integer $k$.
\end{proof}

\begin{defn}\label{defn:F-iso-ochi}
  Let $(A, +, 0, \beta)$ be a permutative category with an invertible pair $(a,a', \eta, \epz)$. 
  Suppose that $F\cn \Z \to A$ is a symmetric monoidal functor such that the inner triangle below commutes in $\permcat$.
  \begin{equation}\label{eq:F-iso-ochi-triangle}
    \begin{tikzpicture}[x=14ex,y=5ex,vcenter]
      \draw[0cell] 
      (0,0) node (a) {\zS}
      (2,0) node (b) {A}
      (1,-1) node (c) {\Z}
      ;
      \draw[1cell] 
      (a) edge node {\chi_a} (b)
      (a) edge[swap] node {\chi_1} (c)
      (c) edge node[pos=.3] (F) {F} (b)
      (c) edge[bend right=35, swap] node[pos=.7] (X) {\ochi_a} (b)
      ;
      \draw[2cell]
      node[between=F and X at .5, rotate=-60, 2label={above,\ka}] {\Rightarrow}
      ;
    \end{tikzpicture}
  \end{equation}
  Define a monoidal natural isomorphism $\ka \cn F \cong \ochi_a$ such that $\ka_1 = \id_a$ as follows.

  First, recalling $F(k) = \ochi_a(k)$ for $k \ge 0$ \cref{eq:Fk-kge0-lem} and $F_{2;k,j} = \id = (\ochi_a)_{2;k,j}$ for $k,j \ge 0$ \cref{eq:F0-F2-kge0-lem}, we define
  \begin{equation}\label{eq:kappa-k-kge0}
    \ka_k = \id_{k\cdot a} \cn F(k) \to \ochi(k) \forspace k \ge 0.
  \end{equation} 
  Second, we define $\ka_k$ for $k < 0$ inductively.
  The component at $k = -1$ is defined as the composite
  \begin{equation}\label{eq:kappa-m1}
    F(-1) \fto{\id_{F(-1)} + \epz^{-1}} F(-1) + a + a' \fto{F_{2; -1, 1} + \id_{a'}} F(0) + a' = a' = \ochi_a(-1).
  \end{equation}
  The component at $k < -1$ is defined as the composite
  \begin{equation}\label{eq:kappa-k}
    \begin{tikzpicture}[x=25ex,y=8ex]
      \draw[0cell] 
      (0,0) node (a) {F(k)}
      (a)++(0,-1) node (b) {F(k+1) + F(-1)}
      (b)++(1.3,0) node (c) {\ochi_a(k+1) + \ochi_a(-1)}
      (c)++(1.1,0) node (d) {\ochi_a(k).}
      ;
      \draw[1cell] 
      (a) edge['] node {F_{2; k+1,-1}^{-1}} (b)
      (b) edge node {\ka_{k+1} + \ka_{-1}} (c)
      (c) edge node {(\ochi_a)_{2; k+1, -1}} (d)
      ;
    \end{tikzpicture}
  \end{equation}
  Naturality of $\ka$ with respect to the non-identity morphisms $-1_k$ in $\Z$ follows from strictness of the unit $0 \in A$, functoriality of the monoidal sum $+$, and the formulas for $\ochi_a(-1_k)$ and $F(-1_k)$ in \cref{eq:ochia-minusone,eq:F-on-odd-morphism-lem}, respectively.
  The \emph{monoidal} natural isomorphism axioms \cref{eq:montransf} are verified in \cref{prop:ka-mon-trans}.
\end{defn}

\begin{prop}\label{prop:ka-mon-trans}
  In the context of \cref{defn:F-iso-ochi}, the natural isomorphism $\ka \cn F \iso \ochi_a$ is a monoidal natural isomorphism.
\end{prop}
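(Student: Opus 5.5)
We have to verify two axioms: the unit axiom $\ka_0 \circ F_0 = (\ochi_a)_0$, and monoidal naturality
\[
  \ka_{k+j} \circ F_{2;k,j} = (\ochi_a)_{2;k,j} \circ (\ka_k + \ka_j)
\]
for all integers $k,j$.

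\textbf{Unit axiom.} By \cref{eq:F0-F2-kge0-lem} and \cref{eq:kappa-k-kge0}, both $F_0$ and $\ka_0$ are identities, and $(\ochi_a)_0 = \id_0$ by \cref{eq:ochia-0}. So the unit axiom holds.

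\textbf{Reduction to generators.} Since both $F$ and $\ochi_a$ are monoidal functors, the associativity axioms \cref{eq:smfunctor} show that the set of pairs $(k,j)$ at which the monoidal naturality square commutes is closed under the evident composites. Here is the standard argument. Suppose the square holds for $(k,j)$, for $(k+j,i)$, and for $(j,i)$. Then one checks that it holds for $(k,j+i)$ by pasting with associativity squares and using invertibility of the constraints. Because of this, it suffices to verify the square on a generating set of pairs, namely those with $k,j \in \{0,\pm 1\}$, together with the pairs $(k,-1)$ for $k<0$. I would organize this as induction on $|k|+|j|$.

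\textbf{Case $k,j \ge 0$.} Everything in the square is an identity, by \cref{eq:F0-F2-kge0-lem,eq:kappa-k-kge0}.

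\textbf{Case $k<-1$, $j=-1$.} This is exactly the inductive definition \cref{eq:kappa-k}, rearranged.

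\textbf{Case with zero entries.} If either entry is $0$, the square follows from the unit axioms.

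\textbf{Mixed-sign cases $(1,-1)$ and $(-1,1)$.} These are the genuine content. For $(-1,1)$, the target is $\ochi_a(0)=0$ and $(\ochi_a)_{2;-1,1} = \eta^{-1}$. Expanding $\ka_{-1}$ by \cref{eq:kappa-m1}, the identity to prove is
\[
  F_{2;-1,1} = \eta^{-1} \circ \bigl( ((F_{2;-1,1}+a')\circ(F(-1)+\epz^{-1})) + a \bigr).
\]
By naturality of $+$, this reduces to
\[
  F(-1) + a \fto{F(-1)+\epz^{-1}+a} F(-1)+a+a'+a \fto{F(-1)+a+\eta^{-1}} F(-1)+a,
\]
which is the identity by the triangle identity \cref{eq:aa'-triang}. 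The case $(1,-1)$ is handled similarly. Here $(\ochi_a)_{2;1,-1} = \epz$, and we need $F_{2;1,-1}$ to be compared with $\epz \circ (a+\ka_{-1})$. This uses the symmetry axiom for $F$ together with the $(-1,1)$ case, naturality of $\beta$ (\cref{rmk:nat-beta-0}), and the relation $\beta_{a,a'} = \fet_a\cdot(\ldots)$ coming from \cref{notn:8}. An alternative route for $(1,-1)$ is to deduce it from the $(-1,1)$ case and associativity at the triple $(1,-1,1)$, using invertibility of $a+{-}$ (\cref{rmk:not-smf}) to cancel. I expect this to be the cleaner approach.

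\textbf{Remaining cases and main obstacle.} With these generators established, induction via the associativity bootstrapping handles all other pairs, such as $(k,j)$ with $k>0>j$. The main obstacle is making the bootstrap rigorous, specifically choosing decompositions so that the induction only invokes pairs already verified. In the mixed-sign case $(k,j)=(1,-1)$, handled through the triple $(1,-1,1)$, one must make sure that no stray Figure Eight factors arise. If they did, they would have to cancel via \cref{lem:64=1}.
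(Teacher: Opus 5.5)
Your proposal is correct in substance. Its architecture is the same as the paper's: verify a few small pairs, then propagate by pasting with the associativity axioms of $F$ and $\ochi_a$ and cancelling invertible constraints. Indeed, the paper's inductive step \cref{eq:ka-mon-square-induction} is exactly your closure rule with $i=-1$.

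The real difference is in the base cases.

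The paper first uses the symmetry axioms of $F$ and $\ochi_a$, together with naturality of $\kappa$ at $\beta_{k,j}=\pm 1_{k+j}$, to discard all pairs with $k<0<j$. It then settles $(k,-1)$ for every $k>0$ at once in \cref{eq:ka-mon-star}. Expanding $\kappa_{-1}$ by \cref{eq:kappa-m1}, associativity of $F$ at $(k,-1,1)$ together with $F_{2;k,0}=\id=F_{2;k-1,1}$ collapses $(\id+\epz)\circ(\id+\kappa_{-1})$ directly to $F_{2;k,-1}$. No triangle identity and no cancellation are needed.

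You instead prove $(-1,1)$ directly from the triangle identity, and your computation there is right. You then transfer it to $(1,-1)$ through the triple $(1,-1,1)$. There the associativity axiom of $\ochi_a$ is itself the triangle identity $\id_a+\eta^{-1}=\epz+\id_a$. The last step cancels $(-)+\id_a$; this is legitimate because right translation by the invertible object $a$ is an equivalence (the cancellation is on the right, not of $a+(-)$ as you wrote). No symmetries occur on this route, so no Figure Eights can arise and \cref{lem:64=1} is never needed. Your route therefore never uses the symmetry axioms and works for the underlying monoidal data alone, while the paper's is shorter and more direct.

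Two small repairs make your bootstrap rigorous.

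First, the single closure rule you state cannot produce $(k,-1)$ for $k\ge 2$. Consider the first time a pair $(k,n)$ with $k\ge 2$ and $n<0$ is derived. The hypothesis $(k,j)$ must then have $j\ge 0$, which forces another such pair $(k+j,n-j)$ to have been derived earlier, a contradiction.
\begin{itemize}
\item You also need the companion rule. It is proved by the same pasting, but cancelling $F_{2;k,j}+\id$ rather than $\id+F_{2;j,i}$, and it deduces $(k+j,i)$ from $(k,j)$, $(j,i)$, $(k,j+i)$.
\item Applied to $(k-1,1)$, $(1,-1)$, $(k-1,0)$, it yields $(k,-1)$.
\item With both rules, your induction on $|k|+|j|$ goes through. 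Alternatively, use the paper's symmetry reduction.
\end{itemize}

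Second, the definitional case is $(k,-1)$ for all $k<0$. This includes $(-1,-1)$, via the definition of $\kappa_{-2}$, and not only $k<-1$.
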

\begin{proof}
  Since $\ka_k = \id_{k\cdot a}$ for $k \ge 0$ \cref{eq:kappa-k-kge0}, $F(0) = 0 = \ochi_a(0)$ \cref{eq:Fk-kge0-lem}, and $F_{2;k,j} = \id = (\ochi_a)_{2;k,j}$ for $k,j \ge 0$ \cref{eq:F0-F2-kge0-lem}, there is nothing to check for the unit axiom or for the monoidal axiom in the cases $k,j \ge 0$, $j=0$, or $k=0$.
  So, by symmetry, it remains to verify that the following square commutes for $j < 0$ and $k \neq 0$.
  \begin{equation}\label{eq:ka-mon-square}
    \begin{tikzpicture}[x=24ex,y=8ex,vcenter]
      \draw[0cell] 
      (0,0) node (a) {F(k) + F(j)}
      (1,0) node (b) {\ochi_a(k) + \ochi_a(j)}
      (1,-1) node (c) {\ochi_a(k+j)}
      (0,-1) node (d) {F(k+j)}
      ;
      \draw[1cell] 
      (a) edge node {\ka_k + \ka_j} (b)
      (b) edge[] node {(\ochi_a)_{2; k, j}} (c)
      (a) edge[swap] node {F_2} (d)
      (d) edge node {\ka_{k+j}} (c)
      ;
    \end{tikzpicture}
  \end{equation}

  In the special case $j = -1$, \cref{eq:ka-mon-square} commutes for $k < 0$ by definition of $\ka_{k-1}$ in \cref{eq:kappa-k}.
  If $j = -1$ and $k > 0$, \cref{eq:ka-mon-square} is the outer diagram below, by the definitions of $\ochi$ and $\ochi_2$ in \cref{eq:ochi-k,eq:ochia-2-mix1}, respectively; the computation $F(k) = k\cdot a = \ochi_a(k)$ for $k \ge 0$ in \cref{eq:Fk-kge0-lem}; and the definition $\ka_k = \id_{k\cdot a}$ for $k \ge 0$ in \cref{eq:kappa-k-kge0}.
  \begin{equation}\label{eq:ka-mon-star}
    \begin{tikzpicture}[x=15ex,y=8ex,vcenter]
      \draw[0cell] 
      (0,0) node (a) {k\cdot a + F(-1)}
      (3,0) node (b) {k\cdot a + a'}
      (3,-2) node (c) {(k-1)\cdot a}
      (0,-2) node (d) {(k-1)\cdot a}
      (a)++(1.5,-1.1) node (p) {(k\cdot a)+F(-1)+a+a'}
      ;
      \draw[1cell] 
      (a) edge node {\id_{k \cdot a} + \ka_{-1}} (b)
      (b) edge[] node {\id_{(k-1)\cdot a} + \epz} (c)
      (a) edge[swap] node {F_{2;k,-1}} (d)
      (d) edge node {\id_{(k-1)\cdot a}} (c)
      (a) edge node {\id + \id + \epz^\inv} (p)
      (p) edge node {(\star)} (b)
      ;
    \end{tikzpicture}
  \end{equation}
  In the above diagram, we define the morphism $(\star)$ as $\id_{k\cdot a} + F_{2;-1,1} + \id$, so the triangle commutes by the definition of $\ka_{-1}$ \cref{eq:kappa-m1}.
  By the monoidal associativity axiom \cref{eq:smfunctor} for $F$ with $(x,y,z) = (k,-1,+1)$ and the observation \cref{eq:F0-F2-kge0-lem} that $F_{2;k,0} = \id_{(k\cdot a)}$ for $k > 0$, the morphism $(\star)$ is also equal to the composite
  \[
  k \cdot a + F(-1) + a + a' \fto{F_{2;k, -1} + \id_a + \id_{a'}} (k-1) \cdot a + a + a' \fto{F_{2; k-1,1} + \id} k \cdot a + a'.
  \]
  Since $k>0$, we have $k-1 \geq 0$, so $F_{2; k-1,1}$ is also the identity $\id_{k \cdot a}$ by \cref{eq:F0-F2-kge0-lem}. 
  Therefore, we conclude that $(\star)$ is also equal to $F_{2;k, -1} + \id_{a+a'}$.
  Hence, the lower region of \cref{eq:ka-mon-star} commutes by functoriality of the monoidal sum and invertibility of $\epz$.
  This finishes the verification that \cref{eq:ka-mon-star} commutes for all integers $k$.

  Finally, we show commutativity of \cref{eq:ka-mon-square} for all $k$ and for $j < -1$ by induction on $j$.
  The outer diagram below is that of \cref{eq:ka-mon-square}.
  \begin{equation}\label{eq:ka-mon-square-induction}
    \begin{tikzpicture}[x=9.0ex,y=7ex,vcenter]
      \draw[0cell] 
      (0,0) node (a) {F(k) + F(j)}
      (7,0) node (b) {\ochi_a(k) + \ochi_a(j)}
      (7,-4) node (c) {\ochi_a(k+j)}
      (0,-4) node (d) {F(k+j)}
      (a)++(1.5,-1.25) node (x) {F(k)+F(j+1)+F(-1)}
      (b)++(-1.5,-1.25) node (y) {\ochi(k)+\ochi(j+1)+\ochi(-1)}
      (c)++(-1.5,1.25) node (z) {\ochi(k+j+1)+\ochi(-1)}
      (d)++(1.5,1.25) node (w) {F(k+j+1)+F(-1)}
      ;
      \draw[1cell] 
      (a) edge node {\ka_k + \ka_j} (b)
      (b) edge[swap] node {(\ochi_a)_{2; k, j}} (c)
      (a) edge node {F_2} (d)
      (d) edge node {\ka_{k+j}} (c)
      (a) edge node {\id + F_2^\inv} (x)
      (b) edge['] node {\id + \ochi_2^\inv} (y)
      (z) edge['] node {\ochi_2} (c)
      (w) edge node {F_2} (d)
      (x) edge node {\ka_k + \ka_{j+1} + \ka_{-1}} (y)
      (y) edge['] node {\ochi_2 + \id} (z)
      (x) edge node {F_2 + \id} (w)
      (w) edge node {\ka_{k+j+1} + \ka_{-1}} (z)
      ;
    \end{tikzpicture}
  \end{equation}
  In the above diagram, the two trapezoids at left and right commute by monoidal associativity \cref{eq:smfunctor} for $F$ and $\ochi$, respectively. 
  The two trapezoids at top and bottom commute because they are instances of \cref{eq:ka-mon-star}.
  The inner square commutes by induction on $j$.
  This completes the proof that $\ka$ is a monoidal natural isomorphism.
\end{proof}

Recall from \cref{prop:prepn} that the permutative category $\P$ represents the 2-functor $A \mapsto \pinv{A}$, for permutative categories $A$.
\begin{thm}\label{thm:ZZ-equiv-P1}
  Let
  \[
    K \cn \P \to \Z
  \]
  be the strict symmetric monoidal functor determined via \cref{eq:prepn} by the canonical invertible pair $\ul{1} = (1,-1,\id,\id) \in \pinv{\Z}$ \cref{eq:ul-k}.
  Then $K$ is a symmetric monoidal equivalence.
\end{thm}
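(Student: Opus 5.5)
We construct a pseudo-inverse to $K$ and show that both composites are isomorphic to identities. Write $\ul{x} = (x,y,\eta,\epz)$ for the universal invertible pair in $\P$. Applying \cref{defn:ochi} to $\ul{x}$ gives a symmetric monoidal functor
\[
  L = \ochi_x \cn \Z \to \P,
\]
which by \cref{prop:ochi-mon} satisfies $L \circ \chi_1 = \chi_x$. The plan is to show $K \circ L \iso \id_\Z$ and $L \circ K \iso \id_\P$ as monoidal natural isomorphisms. This exhibits $K$ as an equivalence in $\permcat$, hence a symmetric monoidal equivalence.

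For $L \circ K \iso \id_\P$ we use \cref{lem:pflex-id}, which reduces the claim to exhibiting an isomorphism $LK(x) \iso x$. Since $K$ is determined by $\ul{1}$ via \cref{eq:prepn}, we have $K(x) = 1$. Hence $LK(x) = \ochi_x(1) = x$ by \cref{eq:ochi-k}. The flexibility of $\P$ (\cref{prop:pflex}), packaged in \cref{lem:pflex-id}, therefore does all the work on this side.

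For $K \circ L \iso \id_\Z$ we use the uniqueness result for functors out of $\Z$, namely \cref{defn:F-iso-ochi,prop:ka-mon-trans}, applied with $A = \Z$ and the invertible pair $\ul{1}$. First we check that the triangle \cref{eq:F-iso-ochi-triangle} commutes for $F = K \circ L$. We have $K \circ L \circ \chi_1 = K \circ \chi_x$, and this is a strict symmetric monoidal functor $\S \to \Z$ sending $x$ to $1$. By uniqueness in \cref{notn:chi} it equals $\chi_1$. Then \cref{prop:ka-mon-trans} gives a monoidal isomorphism $K L \iso \ochi_1$, where $\ochi_1$ is built from $\ul{1} = (1,-1,\id,\id)$. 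It remains to see that $\ochi_1 = \id_\Z$ on the nose. On objects, $\ochi_1(k) = k$, because $k\cdot 1 = k$ and $(-k)\cdot(-1) = k$. Its constraints are composites of identities (since $\eta = \epz = \id$). On morphisms, $\ochi_1(-1_k) = \fet_1 + 1_k = -1_k$ by \cref{eq:8=-1}. Thus $\ochi_1 = \id_\Z$, and so $KL \iso \id_\Z$.

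The main obstacle is already absorbed into earlier results: the strictification of $L\circ K$ through semiflexibility, and the verification that $\ka$ is monoidal. The remaining care goes into the bookkeeping showing that $K\circ L$ strictly satisfies the hypothesis $F\circ\chi_1 = \chi_1$ (and not merely up to isomorphism). This requires $K$ to be strict and the monoidal constraints of $L = \ochi_x$ to be identities on nonnegative integers, as \cref{eq:ochia-2-pos} ensures. One also needs $K(x)=1$ and $K(y) = -1$ precisely, which follows from \cref{eq:eh} with $\e(K) = \ul{1}$.
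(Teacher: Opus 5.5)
Your proof is correct and matches the paper's argument: the paper also uses $\ochi_x$ as the pseudo-inverse, obtains $\ochi_x \circ K \iso \id_\P$ from \cref{lem:pflex-id} via $\ochi_x(K(x)) = x$, and obtains $K \circ \ochi_x \iso \ochi_1 = \id_\Z$ from \cref{prop:ka-mon-trans}. Your extra checks (the strict equality $K\circ\ochi_x\circ\chi_1 = \chi_1$, and $\ochi_1(-1_k) = -1_k$ via \cref{eq:8=-1}) spell out details the paper states only briefly; the one cosmetic issue is that the paper later uses $L$ for the functor $\Z \to \Si C_2$ of \cref{defn:log}, so a different name for $\ochi_x$ would avoid a clash.
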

\begin{proof}
  By \cref{prop:prepn}, the strict symmetric monoidal functor $K \cn \P \to \Z$ is uniquely determined by mapping the invertible pair $\ul{x} = (x,y,\eta, \epz)$ in $\P$ to $\ul{1}$.
  Using the construction in \cref{defn:ochi}, $\ul{x}$ determines a symmetric monoidal functor
  \[
    \ochi_x\cn \Z \to \P.
  \]
  We will prove that $K$ and $\ochi_x$ exhibit a symmetric monoidal equivalence between $\P$ and $\Z$ by constructing monoidal natural isomorphisms $K \circ \ochi_x \cong \id_{\Z}$ and $\id_{\P} \cong \ochi_x \circ K$. 

  First, consider the symmetric monoidal functor
  \[
    \ochi_1 \cn \Z \to \Z
  \]
  determined by \cref{defn:ochi} for the invertible pair $\ul{1}$ in $\Z$.
  Since the unit and counit for $\ul{1}$ are identities, the definitions \cref{eq:ochia-2-mix1,eq:ochia-2-mix2} show that $\ochi_1$ is equal to the identity $\id_\Z$ as a symmetric monoidal functor.
  The composite
  \[
    \Z \fto{\ochi_x} \P \fto{K} \Z
  \]
  is a symmetric monoidal functor such that the following triangles commute in $\permcat$.
  \[
    \begin{tikzpicture}[x=11ex,y=4ex,vcenter]
      \draw[0cell] 
      (0,0) node (z) {\Z}
      (z)++(45:1.4) node (p) {\P}
      (p)++(45:1.4) node (z2) {\Z}
      (z2)++(-3,0) node (s) {\S}
      ;
      \draw[1cell] 
      (s) edge['] node {\chi_1} (z)
      (z) edge['] node {\ochi_{x}} (p)
      (s) edge['] node[pos=.7] {\chi_x} (p)
      (s) edge node {\chi_1} (z2)
      (p) edge['] node {K} (z2)
      ;
    \end{tikzpicture}
  \]
  Therefore, by \cref{prop:ka-mon-trans} with $F = K \circ \ochi_x$, there is a monoidal natural isomorphism $\ka \cn K \circ \ochi_x \iso \ochi_1 = \id_\Z.$

  Second, consider the composite
  \[
    \P \fto{K} \Z \fto{\ochi_x} \P.
  \]
  Since $\ochi_x \bigl( K(x) \bigr) = \ochi_x(1) = x$, \cref{lem:pflex-id} gives a monoidal natural isomorphism $\ochi_x \circ K \iso \id_\P$, as desired.
\end{proof}

\begin{rmk}\label{rmk:yoneda-version}
  A more abstract proof of \cref{thm:ZZ-equiv-P1} can be obtained from the bicategorical Yoneda Lemma.
  For such a proof, one extends the analysis of \cref{prop:ka-mon-trans} somewhat non-trivially to show that there is an equivalence, for each permutative category $A$,
  \begin{equation}\label{eq:ZZ-rep-inv2}
    \permcat(\Z,A) \fto{\hty} \pinv{A}
  \end{equation}
  that is 2-natural in $A$.
  One would then combine \cref{eq:ZZ-rep-inv2} with the isomorphism from \cref{prop:prepn} and the equivalence from \cref{cor:pflex-equiv}:
  \begin{equation}\label{eq:its-psnat2}
    \pinv{A} \cong \permcats(\P, A)
    \xgenarrow{\simeq}{right hook->}
    \permcat(\P, A).
  \end{equation}
  To apply the bicategorical Yoneda Lemma and obtain a monoidal equivalence $\Z \hty \P$, one must show that the composite equivalence \cref{eq:its-psnat2} is pseudonatural in the variable $A$, with respect to all symmetric monoidal functors, even though the first isomorphism is \emph{a priori} only 2-natural in strict symmetric monoidal functors.

  The authors' attempts to prove the required pseudonaturality for \cref{eq:its-psnat2} use flexibility of $\P$ in a way that is comparable but somewhat more general than that of \cref{lem:pflex-id} in the direct proof of \cref{thm:ZZ-equiv-P1} above.
  Recovery of the inducing equivalence $K\cn \Z \to \P$ from the Yoneda argument, by evaluating the composite of \cref{eq:ZZ-rep-inv2,eq:its-psnat2} for the case $A = \Z$ and the identity functor $\id_\Z$, is then essentially equivalent to the direct arguments for \cref{thm:ZZ-equiv-P1} as written above.
  For these reasons, we have opted to give the direct proof of \cref{thm:ZZ-equiv-P1} and leave details of the Yoneda argument to the interested reader.
\end{rmk}

\section{Parity and coherence}
\label{sec:z-comp-2}

This section develops coherence theory for morphisms between invertible objects, beginning with the definition of parity for morphisms in $\Z$ and $\P$ (\cref{defn:parity}).
The proof of \cref{thm:free-inv-coherence} appears thereafter, following \cref{rmk:betaab-triv}.

The remainder of this section uses \cref{thm:free-inv-coherence} in a variety of examples.
\cref{example:elem} revisits some calculations from the perspective of coherence theory, while \cref{example:addl} provides some new calculations.
We end with applications to additivity of the Figure Eight morphisms in \cref{example:8ab=8a8b} and to symmetric monoidal structure of conjugation functors in \cref{example:conj-symm}.

\begin{defn}\label{defn:C2}
  Let $C_2 = \{0,1\}$ denote the additive group of order two and let $\Si C_2$ denote its \term{suspension}: the one-object groupoid with morphism group given by $C_2$.
  We will identify the elements 0 and 1 of $C_2$ by their respective parity: even and odd.
  Note that $\Si C_2$ has a unique symmetric monoidal structure in which the monoidal sum is given by addition in $C_2$ and the symmetry is the identity.
\end{defn}

\begin{defn}\label{defn:log}
  Define a monoidal functor
  \begin{equation}\label{eq:log}
    L \cn \Z \to \Si C_2
  \end{equation}
  as the unique strict monoidal functor that sends each automorphism $-1_k$ in $\Z(k,k)$ to the nontrivial morphism in $\Si C_2$.
  That is, we have
  \begin{equation}\label{eq:L-m1}
    L((-1_k)^p) = p \text{ (mod 2)},
  \end{equation}
  for each pair of integers $k$ and $p$.
  Thus, for each integer $k$, $L$ is the isomorphism between multiplicative and additive groups of order two:
  \begin{equation}\label{eq:Lkk-iso}
    L\cn \Z(k,k) = \ZZ^\times \iso C_2.
  \end{equation}
  Functoriality of $L$ holds because \cref{eq:L-m1} is a group homomorphism.
  The strict monoidal functor axioms from \cref{defn:smfunctor} hold for $L$ by \cref{eq:muknuj}.
\end{defn}

\begin{rmk}\label{rmk:L-nonsymm}
  Note that $L$ is a monoidal functor, but is not a \emph{symmetric} monoidal functor.
  The nontrivial symmetries in $\Z$, such as $\be_{1,1} = \fet_1 + 2$, are mapped to the nonidentity morphism in $\Si C_2$, but the symmetry of the unique object in $\Si C_2$ is necessarily the identity.
  Therefore $L$ does not map symmetries to symmetries, so is not a symmetric monoidal functor.
\end{rmk}

\begin{defn}[Parity for morphisms]\label{defn:parity}
  The \term{parity} of an automorphism $f \in \Z(k,k)$ is $Lf \in C_2$.
  The \term{parity} of a morphism $f$ in $\P$ is $LKf$, where $K\cn \P \fto{\hty} \Z$ is the equivalence from \cref{thm:ZZ-equiv-P1}.
  In each case, we write $\p(f) \in C_2$ to denote the parity of $f$.
\end{defn}
Since every morphism in $\Z$ is an automorphism, and $L$ is a bijection on automorphism sets \cref{eq:Lkk-iso}, two automorphisms of the same object in $\Z$ are equal if and only if they have the same parity.
Since $K\cn \P \to \Z$ is an equivalence, the morphisms of $\P$ have a similar characterization, stated as follows.
\begin{prop}\label{prop:parity}
  Two parallel morphisms $f$ and $g$ in $\P$ are equal if and only if they have the same parity.
\end{prop}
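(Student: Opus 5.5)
The plan is to reduce the claim to the corresponding statement in $\Z$, using that $K\cn \P \to \Z$ from \cref{thm:ZZ-equiv-P1} is an equivalence and hence fully faithful. The forward direction is immediate: if $f = g$ then $Kf = Kg$, so $LKf = LKg$ and $\p(f) = \p(g)$ by \cref{defn:parity}.

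For the converse, suppose $f,g\cn z \to w$ are parallel in $\P$ with $\p(f) = \p(g)$. Then $Kf$ and $Kg$ are parallel morphisms $Kz \to Kw$ in $\Z$. Since $\Z$ has no morphisms between distinct objects, $Kz = Kw = k$ for some integer $k$, and both $Kf$ and $Kg$ lie in $\Z(k,k)$. By \cref{eq:Lkk-iso}, $L$ restricts to a bijection $\Z(k,k) \iso C_2$. The equality $LKf = LKg$ therefore gives $Kf = Kg$.

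Next I would invoke faithfulness of $K$. A symmetric monoidal equivalence has an underlying functor that is an equivalence of categories, so it is fully faithful, and in particular faithful. Concretely, with the pseudo-inverse $\ochi_x$ and the monoidal natural isomorphism $\theta\cn \id_\P \iso \ochi_x \circ K$ from the proof of \cref{thm:ZZ-equiv-P1}, naturality gives
\[
  f = \theta_w^{-1} \circ \ochi_x(Kf) \circ \theta_z = \theta_w^{-1} \circ \ochi_x(Kg) \circ \theta_z = g.
\]
This completes the converse.

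There is no substantial obstacle here; all the difficulty lies in \cref{thm:ZZ-equiv-P1}. The only point that needs a remark is that parity is defined for arbitrary morphisms of $\P$, not only automorphisms. This causes no problem, because $K$ sends every morphism to an automorphism in the skeletal category $\Z$, so the comparison via $L$ on a single hom-set $\Z(k,k)$ is always available.
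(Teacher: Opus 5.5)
Your proof is correct and follows essentially the same approach as the paper. The paper likewise notes that $L$ is a bijection on each automorphism set of the skeletal category $\Z$, and then transfers this to $\P$ because $K$ is an equivalence and hence faithful. Your explicit use of the isomorphism $\ochi_x \circ K \iso \id_\P$ simply spells out the faithfulness step that the paper leaves implicit.
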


Now we collect several observations about computing parity.
Recall the notation $k \cdot x$ for $k \ge 0$ from \cref{notn:n-dot}.
\begin{rmk}[Basic observations]\label{rmk:parity-facts}\ 
  \begin{enumerate} 
  \item\label{it:parity-additivity} Since $K$ and $L$ are both monoidal functors, parity of morphisms in $\P$ and $\Z$ is additive with respect to monoidal sums and composition.

  \item\label{it:parity-beta} For $j,k \ge 0$, the parity of $\beta_{j\cdot x, k \cdot x}$ is that of $\beta_{j,k}$ in $\Z$, which is $jk$ (mod 2) by \cref{eq:Zbetakj}.

  \item\label{it:parity-beta-invs} Furthermore, recalling that $y \in \P$ is the inverse of $x$ and $K(y) = -1 \in \Z$ by \cref{thm:ZZ-equiv-P1}, the parities of $\beta_{j\cdot x, k \cdot y}$, $\beta_{j\cdot y, k\cdot x}$, and $\beta_{j\cdot y, k \cdot y}$ are all $jk$ (mod 2).

  \item\label{it:parity-8kx} For $k \ge 0$, we have $K(k \cdot x) = k \in \Z$, since $K$ is strict monoidal.
    Hence, by \cref{lem:feightone-equals-eightone}, the parity of $\fet_{k \cdot x}$ in $\P$ is that of $\fet_k$ in $\Z$, which is $k^2 \equiv k$ (mod 2) by \cref{it:betaaa=8aaa}.

  \item\label{it:parity-epz-eta}
    \Cref{thm:ZZ-equiv-P1} shows that $K\cn \P \to \Z$ is determined by the canonical invertible pair $\ul{1}$ \cref{eq:ul-k}.
    Thus, $K(\eta)= \id_{0} = K(\epz)$ and so the parities of $\eta$ and $\epz$ are even.
  \end{enumerate}
\end{rmk}

We now use the bicategorical coproducts from \cref{thm:smbperm1427} to define componentwise projections for invertible objects, in analogy with the projections of \cref{defn:ISS}.
\begin{defn}\label{defn:IPP}
  Suppose $G$ is a finite set and recall from \cref{equation:PPG-defn} that $\PP G = \coprod_{g \in G} \P\{g\}$ is the free permutative category on a finite set of invertible generators $G$.
  Then, for each $a \in G$, let $I_a$ denote the following composite of $I$ with the product projection.
  \begin{equation}\label{eq:IPP}
    \begin{tikzpicture}[x=12ex,y=8ex,baseline={(a.base)}]
      \draw[0cell] 
      (0,0) node (a0) {\PP G}
      (a0)++(8ex,0) node[text depth=3pt] (a) {\coprod_{g \in G} \P\{g\}}
      (a)++(1.3,0) node[text depth=3pt] (b) {\prod_{g \in G} \P\{g\}}
      (b)++(1,0) node (c) {\P\{a\}}
      ;
      \draw[0cell] (a0)++(3.1ex,0) node {=};
      \draw[1cell] 
      (a) edge node {I} node['] {\hty} (b)
      (b) edge node {} (c)
      ;
      \draw[1cell,rounded corners]
      (a0) -- ++(0,.5) -- node {I_a} ($(c)+(0,.5)$) -- (c)
      ;
    \end{tikzpicture}\vspace{.5pc}
  \end{equation}
  Recall from \cref{defn:parity} that parity of morphisms in $\P$ is computed via the equivalence $K \cn \P \to \Z$ from \cref{thm:ZZ-equiv-P1}.
  The \term{$a$-parity} of a morphism $f$ in $\PP G$ is the parity of the morphism $I_a f$ in $\P\{a\}$, denoted
  \begin{equation}\label{eq:apar}
    \p_a(f) = \p \bigl( I_a f \bigr) \in C_2.
  \end{equation}
\end{defn}

\begin{rmk}\label{rmk:betaab-triv}
  Suppose, in the context of \cref{defn:IPP}, that $G = \{a,b\}$ is a two-element set, and consider the symmetry isomorphism $\beta_{a,b}\cn a+b \to b + a$ in $\PP\{a,b\}$.
  Observe that each of $I_a(\beta_{a,b})$ and $I_b(\beta_{a,b})$ is an identity morphism, in $\PP\{a\}$ and $\PP\{b\}$, respectively.
  Therefore, in contrast with the odd $a$-parity of $\beta_{a,a}$ and $b$-parity of $\beta_{b,b}$ from \cref{rmk:parity-facts}~\cref{it:parity-beta}, we have
  \begin{equation}\label{eq:betaab-par-triv}
    \p_a(\beta_{a,b}) = 0 = \p_b(\beta_{a,b}).
  \end{equation}
\end{rmk}

\subsection*{Proof of Theorem~\texorpdfstring{\ref{thm:free-inv-coherence}}{1.1}}
Recall from the statement of \cref{thm:free-inv-coherence} that $s$ and $t$ are assumed to be parallel morphisms in $\PP G$, the free permutative category on a finite set of invertible generators $G$, as in \cref{defn:IPP}.
The strict symmetric monoidal functor $I$ in \cref{eq:IPP} is an equivalence in $\permcat$ by \cref{thm:smbperm1427}, so the morphisms $s$ and $t$ are equal if and only if their projections $I_a(s)$ and $I_a(t)$ are equal as morphisms of $\P\{a\}$ for each $a$ in $G$.

Note the assumption that $s$ and $t$ are parallel implies that $I_a(s)$ and $I_a(t)$ are parallel for each $a$ in $G$.
Therefore, for each $a$ in $G$, \cref{prop:parity} shows that the morphisms $I_a(s)$ and $I_a(t)$ are equal if and only if they have the same parity.
The stated assertion then follows from the definition of $a$-parity in \cref{eq:apar}. {\null\nobreak\hfill\ensuremath{\square}}

\begin{rmk}\label{rmk:parity-parallel}
  The assumption in \cref{thm:free-inv-coherence} that $s$ and $t$ are parallel is essential for just the same reasons noted in \cref{rmk:coherence-parallel} for symmetric coherence via underlying permutations.
  As discussed in \cref{rmk:betaab-triv}, the symmetry $\beta_{a,b}$ in $\PP\{a,b\}$ is an example of a morphism with even $a$- and $b$-parity that is \emph{not} an identity.
\end{rmk}

\subsection*{Examples of Theorem~\texorpdfstring{\ref{thm:free-inv-coherence}}{1.1}}

For the following examples, let $(A, +, 0, \beta)$ be a permutative category with an invertible pair $\ul{a} = (a, a', \eta, \epz)$.
The invertible pair $\ul{a}$ determines a unique strict symmetric monoidal functor $f\cn \P \to A$ by \cref{prop:prepn}.
We discuss $a$-parity for morphisms in $\P\{a\}$, resulting in statements for general $A$ via $f$.
\begin{example}[Elementary computations]\label{example:elem}
  The following are computed independently in \cref{sec:calcs}, but are consistent with \cref{thm:free-inv-coherence}.
  For each computation, one decomposes the indicated morphism to a composite of units, counits, and braidings on the objects $a$ and/or $a'$.
  Then, recall from \cref{rmk:parity-facts} that each of $\beta_{a,a}$, $\beta_{a',a'}$, $\beta_{a,a'}$, and $\beta_{a',a}$ has odd parity, while identities, units $\eta$, and counits $\epz$ each have even parity.
  \begin{enumerate}
  \item In the context of \cref{lem:even-perms}, for a permutation $\si \in \Si_n$, the $a$-parity of $\chi_a(\si)$ is the sign of $\sigma$.
  \item In the context of \cref{defn:fet-figC-figH}, each of $\fet_a$, $\figC_a$, and $\figH_a$ has odd $a$-parity. Since these are all parallel, they are equal.
  \item In the context of \cref{lem:betaaa=8aaa}, each of the morphisms in \cref{it:8a=8a',it:betaaa=8aaa,it:betaa'a'=8aa'a'} has odd $a$-parity.
  \end{enumerate}
\end{example}

\begin{example}[Additional computations]\label{example:addl}
  Each of these observations follows from \cref{thm:free-inv-coherence} by comparing $a$-parity.
  As in \cref{example:elem}, the computations of $a$-parity follow from \cref{rmk:parity-facts} and decomposing each morphism as a composite of (sums of) braidings on $a$ and/or $a'$, units, counits, and identities.
  \begin{enumerate}
  \item\label{it:addl-1} For an integer $k \ge 0$, computing $a$-parity shows that $\fet_{k\cdot a} = k \cdot \fet_a$.  Here, one uses the definition \cref{notn:8} for both sides, with the invertible pair $(k\cdot a, k\cdot a')$ having unit and counit given by suitable iterates of $\eta$ and $\epz$.
  \item\label{it:addl-2} The following two cyclic permutations have odd $a$-parity and are therefore equal
    \[
      \beta_{(a+a'+a),a'} = \beta_{a,(a'+a+a')} \cn a+a'+a+a' \to a'+a+a'+a.
    \]
  \item\label{it:addl-3} The automorphism
  \[
  \beta_{a+a', a+a'} \cn a + a' + a + a' \cong a + a' + a + a' 
  \]
  has even $a$-parity and, as an automorphism, is parallel to the identity on $a+a'+a+a'$.
  Therefore $\beta_{a+a',a+a'}$ is an identity morphism.
  \item\label{it:addl-4} In contrast with the previous example, the morphism
  \[
  \beta_{a+a, a'+a'} \cn a + a + a' + a' \cong a' + a' + a + a 
  \]
  has even $a$-parity, but is \emph{not} an identity, simply because it is not an automorphism.
  \end{enumerate}
\end{example}

\begin{rmk}\label{rmk:additional-comps}
  The conclusions in \cref{example:addl} cannot be extracted from the coherence for permutative categories in  \cref{thm:permcat-coherence-one-obj,thm:diagrcoh-s}.
  The first example \cref{it:addl-1} requires the isomorphisms $\eta, \epz$.
  For examples \cref{it:addl-2,it:addl-3}, one might replace $a'$ with a general object $b$ and consider the displayed morphisms in $\SS\{a,b\}$.
  Using this replacement for the two morphisms in \cref{it:addl-2} results in $\beta_{(a+b+a),b}$ and $\beta_{a,(b+a+b)}$, two morphisms with different underlying permutations.
  Likewise, such a replacement in \cref{it:addl-3} results in $\beta_{a+b,a+b}$, a morphism whose $a$- and $b$-permutations are each the nontrivial transposition $(1 \ 2) \in \Sigma_2$.
  One might also replace the object $a+a'$ with a general object $x$ and consider the $x$-permutation of \cref{it:addl-3}.
  Once again, doing so produces a non-identity morphism in $\SS\{x\}$.
  Furthermore, even as stated with $a'$, \cref{lem:even-perms} does not apply because the relevant morphisms are not in the image of $\chi_a\cn \S\{a\} \to \P\{a\}$.
\end{rmk}

\begin{example}[Weak invertibility]\label{example:weak-vs-strict-inv}
  The concept of parity offers an additional perspective on weak invertibility and \cref{rmk:triangle-and-8}.
  If $a$ is an invertible object of $A$, with weak inverse $a'$ and isomorphism
  \[
    \eta\cn 0 \iso a' + a,
  \]
  then \cref{lem:weak-vs-structured-inv} shows that $a$ is the base object of an invertible pair $\ul{a}$.
  Moreover, the method of proof in \cite[IV.4]{ML98Categories} or \cite[Theorem~1.4]{Gur2012Biequivalences} shows that we can take $\ul{a} = (a,a',\eta,\epz)$, with the same inverse $a'$ and unit $\eta$.

  However, the counit $\epz$ is generally not equal to the composite 
  \[
    \xi = \eta^\inv \circ \beta_{a,a'} \cn a+a' \iso 0.
  \]
  To explain this, note that the corresponding morphism $\xi$ in $\P\{a\}$ will generally have the \emph{opposite} $a$-parity from $\eta$.
  Hence, the triangle identities \cref{eq:aa'-triang} do \emph{not} necessarily hold in $A$ for $(a,a',\eta,\xi)$.
  Notably, they do not hold when $A = \P\{a\}$.
\end{example}

\begin{example}[Additivity of $\fet$]\label{example:8ab=8a8b}
  If $\ul{b} = (b,b',\de,\ga)$ is another invertible pair in $A$, then $a+b$ is invertible with inverse $b' + a'$.
  In this example we use $a$-parity and $b$-parity in $\PP\{a,b\} = \P\{a\} \coprod \P\{b\}$ to determine an invertible pair $\ul{a+b}$ and show
  \begin{equation}\label{eq:8ab=8a8b}
    \fet_{a+b} = \fet_a + \fet_b
  \end{equation}
  as automorphisms of $0$.
  This equality holds first in $\PP\{a,b\}$, and therefore also in any permutative category $A$.
  Although this result is well-known in the trace literature (e.g., \cite[Section~5]{PS2014Traces}), our approach is somewhat novel and demonstrates how to apply \cref{thm:free-inv-coherence} more generally.
  
  To define the unit and counit for $\ul{a+b}$, consider the solid arrow composites around the following diagrams in $\PP\{a,b\}$.
  \begin{equation}\label{eq:ze-th}
    \begin{tikzpicture}[x=15ex,y=15ex,vcenter]
      \draw[0cell] 
      (0,0) node (a) {0}
      (a)++(.5,.5) node (b) {a' + a + b' + b}
      (b)++(1,-.5) node (c) {b' + a' + a + b}
      (a)++(1,-.5) node (d) {b' + b}
      ;
      \draw[1cell] 
      (a) edge node[pos=.3] {\eta + \de} (b)
      (b) edge node {\beta_{a' + a,b'} + \id_b} (c)
      (a) edge['] node {\de} (d)
      (d) edge['] node {\id_b' + \eta + \id_b} (c)
      (a) edge[dashed] node {\ze} (c)
      ;
    \end{tikzpicture}
    \andspace
    \begin{tikzpicture}[x=15ex,y=-15ex,vcenter]
      \draw[0cell] 
      (0,0) node (a) {a + b + b' + a'}
      (a)++(.5,.5) node (b) {a + a'}
      (b)++(1,-.5) node (c) {0}
      (a)++(1,-.5) node (d) {b + b' + a + a'}
      ;
      \draw[1cell] 
      (a) edge['] node {\id_a + \ga + \id_{a'}} (b)
      (b) edge['] node {\epz} (c)
      (a) edge node[pos=.3] {\beta_{a,b+b'} + \id_{a'}} (d)
      (d) edge node[pos=.6] {\ga + \epz} (c)
      (a) edge[dashed] node {\theta} (c)
      ;
    \end{tikzpicture}
  \end{equation} 
  In the above diagrams, the $a$-parity of each composite is computed using \cref{rmk:parity-facts,eq:betaab-par-triv}.
  For example, we have the following for the upper composite in the diagram at left:
  \[
    \p_a\bigl( (\beta_{a'+a,b'} + \id_b) \circ (\eta + \de) \bigr)
    = \p_a(\beta_{a'+a,b'} + \id_b) + \p_a(\eta + \de)
    = 0.
  \]
  Likewise, each of the other three composites has even $a$-parity, and all four composites have even $b$-parity.
  This shows that the outer diagrams commute.
  We define $\ze$ and $\theta$ as the respective dashed morphisms at left and right.
  The triangle identities \cref{eq:aa'-triang} for $\ze$ and $\theta$ follow by considering the lower composites in \cref{eq:ze-th}.
  Therefore, $\ul{a+b} = (a+b, b' + a', \ze, \theta)$ is an invertible pair.

  To verify \cref{eq:8ab=8a8b}, consider the following diagram.
  Recalling \cref{notn:8}, the composites along the top and bottom are $\fet_{a+b}$ and $\fet_a + \fet_b$, respectively.
  \begin{equation}\label{eq:8ab=8a8b-diagram}
    \begin{tikzpicture}[x=17ex,y=10ex,vcenter]
      \draw[0cell] 
      (0,0) node (a) {0}
      (a)++(1,.5) node (b) {b' + a' + a + b}
      (b)++(2,0) node (c) {a + b + b' + a'}
      (c)++(1,-.5) node (d) {0}
      (a)++(1,-.5) node (b') {a' + a + b' + b}
      (b')++(2,0) node (c') {a + a' + b + b'}
      ;
      \draw[1cell] 
      (a) edge node {\ze} (b)
      (b) edge node {\beta_{b'+a',a+b}} (c)
      (c) edge node {\theta} (d)
      (a) edge[',pos=.6] node {\eta + \de} (b')
      (b') edge['] node {\beta_{a',a}+\beta_{b',b}} (c')
      (c') edge[',pos=.4] node {\epz + \ga} (d)
      ;
      \draw[1cell,rounded corners]
      (a) -- ++(0,1) -- node {\fet_{a+b}} ($(d)+(0,1)$) -- (d)
      ;
      \draw[1cell,rounded corners]
      (a) -- ++(0,-1) -- node['] {\fet_a + \fet_{b}} ($(d)+(0,-1)$) -- (d)
      ;
    \end{tikzpicture}
  \end{equation}
  The $a$- and $b$-parities of the respective composites are then computed via \cref{rmk:parity-facts,eq:betaab-par-triv}, so that we have
  \[
    \p_a(\fet_{a+b})
    = 1 = \p_a(\fet_a) \andspace
    \p_b(\fet_{a+b}) 
    = 1 = \p_b(\fet_b). 
  \]
  Since $\p_b(\fet_a) = 0 = \p_a(\fet_b)$, the additivity of parity in \cref{rmk:parity-facts}~\cref{it:parity-additivity} confirms \cref{eq:8ab=8a8b} as desired.
\end{example}

Now we give a slightly more general use of the coherence results above.
Since \cref{thm:smbperm1427} applies to arbitrary permutative categories, one could use it to determine equality of morphisms that involve some general and some invertible objects in a permutative category.
Recalling \cref{eq:SSG-coprodS,equation:PPG-defn}, we have an equivalence as follows for finite sets $G$ and $H$:
\begin{equation}\label{eq:SS-and-PP}
  (\SS G) \coprod (\PP H) \hty \left(\prod_{g \in G} \S\{g\}\right) \times \left(\prod_{h \in H} \P\{h\}\right). 
\end{equation}

\begin{example}[Conjugation]\label{example:conj-symm}
  Suppose that $A$ is a permutative category with an invertible pair $\ul{a} = (a,a',\eta,\epz)$.
  There is a functor $(?)^a$ called \term{conjugation by $a$} and given on objects $z$ and morphisms $f$ of $A$ by 
  \[
    z \mapsto (z)^a = a'+ z + a \andspace f \mapsto (f)^a = \id_{a'} + f + \id_{a}.
  \]
  Furthermore, there is a monoidal constraint given by the counit of $\ul{a}$:
  \[
    z^a +  w^a = a' + z + a + a' + w + a \fto{id + \epz + \id} a' + z + w + a = (z + w )^a.
  \]
  Since the unit of $A$ is strict, we take $\eta$ for the unit constraint of $(?)^a\cn A \to A$.
  One can use \cref{eq:SS-and-PP} to verify the symmetric monoidal functor axioms \cref{eq:smfunctor} for conjugation by $a$ in certain coproducts of $\S$ and $\P$.
  For example, the symmetry axiom at $z,w \in A$ follows from commutativity of the following diagram in $\S\{z\} \coprod \S\{w\} \coprod \P\{a\}$.
  \[
    \begin{tikzpicture}[x=12ex,y=9ex]
      \draw[0cell] 
      (0,0) node (a) {(a' + z + a) + (a' + w + a)}
      (a)++(3,0) node (b) {a' + (z + w) + a}
      (b)++(0,-1) node (c) {a' + (w + z) + a}
      (a)++(0,-1) node (d) {(a' + w + a) + (a' + z + a)}
      ;
      \draw[1cell] 
      (a) edge node {\id + \epz + \id} (b)
      (d) edge node {\id + \epz + \id} (c)
      (a) edge['] node {\beta_{a' + z + a\,,\,a' + w + a}} (d)
      (b) edge node {\id + \beta_{z,w} + \id} (c)
      ;
    \end{tikzpicture}
  \]
  Using \cref{eq:SS-and-PP} and projecting to components, one verifies that the above diagram commutes because (\textit{i}) the $z$- and $w$-permutations of each morphism are identities and (\textit{ii}) the $a$-parity of each morphism is even.
  Therefore, the composites around the diagram are equal in $\S\{z\} \times \S\{w\} \times \P\{a\}$, and hence also in $\S\{z\} \coprod \S\{w\} \coprod \P\{a\}$ by \cref{thm:smbperm1427}.
  The corresponding diagram in $A$, being the image of the above formal diagram, therefore also commutes.

  Next, one can observe that the isomorphisms
  \begin{equation}\label{eq:conja-id}
    a' + z + a \fto{\id_{a'} + \beta_{z,a}} a' + a + z \fto{\eta^\inv + \id_z} z
    \foreachspace z \in A
  \end{equation}
  give a natural isomorphism $(?)^a \to \Id_A$.
  The monoidal transformation axioms \cref{eq:montransf} for this natural isomorphism follow from commutativity of the following diagram in $\S\{z\} \coprod \S\{w\} \coprod \P\{a\}$.
  \[
    \begin{tikzpicture}[x=12ex,y=9ex]
      \draw[0cell] 
      (0,0) node (a) {(a' + z + a) + (a' + w + a)}
      (a)++(3,0) node (b) {a' + (z + w) + a}
      (b)++(0,-1) node (c) {(a' + a) + z + w}
      (c)++(0,-1) node (d) {z + w}
      (a)++(0,-1) node (e) {(a' + a) + z + (a' + a) + w}
      (e)++(0,-1) node (f) {z+w}
      ;
      \draw[1cell] 
      (a) edge node {\id + \epz + \id} (b)
      (b) edge node {\id + \beta_{z+w,a}} (c)
      (c) edge node {\eta^\inv + \id} (d)
      (a) edge['] node {\id + \beta_{z,a} + \id + \beta_{w,a}} (e)
      (e) edge['] node {\eta^\inv + \id + \eta^\inv + \id} (f)
      (f) edge node {\id} (d)
      ;
    \end{tikzpicture}
  \]
  To verify that the above diagram commutes, one can again use \cref{eq:SS-and-PP}, project to components, and observe that each morphism has trivial $z$-permutation, trivial $w$-permutation, and even $a$-parity.
  As above, this implies that the corresponding diagram in $A$ also commutes.

  So, conjugation by an invertible object $a$ is a symmetric monoidal functor $A \to A$ that is monoidal naturally isomorphic to the identity on $A$.
  Likewise, conjugation by $a'$, the base object of the invertible pair $\ul{a'} = (a',a,\epz^\inv,\eta^\inv)$ from \cref{lem:inv-switch}~\cref{it:inv-switch-pair}, is monoidal naturally isomorphic to the identity on $A$.

  There is also a natural isomorphism $(?)^a \to (?)^{a'}$ with components
  \begin{equation}\label{eq:conja-not-conja'}
    a' + z + a \fto{\id + \beta_{z,a}} a' + a + z \fto{\beta_{a',a+z}} a + z + a'
    \foreachspace z \in A.
  \end{equation}
  However, this is generally \emph{not} a monoidal natural isomorphism!
  In particular, the following monoidal naturality diagram does \emph{not} commute in $\S\{z\} \coprod \S\{w\} \coprod \P\{a\}$, because the $a$-parity of the left-bottom composite is even, while the $a$-parity of the top-right composite is odd.
  \[
    \begin{tikzpicture}[x=12ex,y=9ex]
      \draw[0cell] 
      (0,0) node (a) {(a' + z + a) + (a' + w + a)}
      (a)++(3,0) node (b) {a' + (z + w) + a}
      (b)++(0,-1) node (c) {a' + a + (z + w)}
      (c)++(0,-1) node (d) {a + (z + w) + a'}
      (a)++(0,-1) node (e) {(a' + a + z) + (a' + a + w)}
      (e)++(0,-1) node (f) {(a + z + a') + (a + w + a')}
      ;
      \draw[1cell] 
      (a) edge node {\id + \epz + \id} (b)
      (b) edge node {\id + \beta_{z+w,a}} (c)
      (c) edge node {\beta_{a',a + z + w}} (d)
      (a) edge['] node {\id + \beta_{z,a'} + \id + \beta_{w,a'}} (e)
      (e) edge['] node {\beta_{a,a' + z} + \beta_{a,a' + w}} (f)
      (f) edge node {\id + \eta^\inv + \id} (d)
      ;
    \end{tikzpicture}
  \]

  To conclude, conjugation by $a$ and conjugation by $a'$ are each monoidal naturally isomorphic to the identity on $A$, and hence to each other, but the components \cref{eq:conja-not-conja'} do not define that isomorphism.
  Instead, one must use \cref{eq:conja-id} and its corresponding inverse for $(?)^{a'}$.
  The component of this composite at the unit object $z = 0$ is the composite
  \begin{equation}\label{eq:notbetaa'a}
    a' + a \fto{\eta^\inv} 0 \fto{\epz^\inv} a + a'.
  \end{equation}
  This composite is the \emph{even-parity} morphism in the two-element hom set $\P\{a\}\bigl( a'+a,a+a' \bigr) \iso \Z \bigl( 0,0 \bigr) = \ZZ^\times$.
  Recalling \cref{rmk:parity-facts}~\cref{it:parity-beta-invs}, the odd-parity morphism of the same set is $\beta_{a',a}$.
\end{example}

\section*{Declarations}
The authors have no competing interests to declare that are relevant to the content of this article.
No funds, grants, or other support was received.

\noindent Corresponding author: Nick Gurski



\begin{thebibliography}{BKP89}

\bibitem[Abr10]{Abramsky2009No}
S.~Abramsky, \emph{No-cloning in categorical quantum mechanics}, Semantic
  Techniques in Quantum Computation, Cambridge University Press, 2010,
  pp.~1--28. \doi{10.1017/CBO9781139193313.002}

\bibitem[BL04]{BL2004Higher}
J.~C. Baez and A.~D. Lauda, \emph{Higher-dimensional algebra. {V}. 2-groups},
  Theory Appl. Categ. \textbf{12} (2004), no.~14, 423--491.

\bibitem[BKP89]{BKP1989Two}
R.~{Blackwell}, G.~{Kelly}, and A.~{Power}, \emph{Two-dimensional monad
  theory}, {J. Pure Appl. Algebra} \textbf{59} (1989), no.~1, 1--41.
  \doi{10.1016/0022-4049(89)90160-6}

\bibitem[Bra20]{Bra2020Braided}
O.~Braunling, \emph{Braided categorical groups and strictifying associators},
  Homology Homotopy Appl. \textbf{22} (2020), no.~2, 295--321 (English).
  \doi{10.4310/HHA.2020.v22.n2.a19}

\bibitem[CG]{CG25Operads}
A.~Corner and N.~Gurski, \emph{Operads and equivariance}. \arxiv{2603.19854v1}

\bibitem[Dug14]{Dug2014Coherence}
D.~Dugger, \emph{Coherence for invertible objects and multigraded homotopy
  rings}, Algebr. Geom. Topol. \textbf{14} (2014), no.~2, 1055--1106 (English).
  \doi{10.2140/agt.2014.14.1055}

\bibitem[GK14]{GK2014Symmetric}
N.~Ganter and M.~Kapranov, \emph{Symmetric and exterior powers of categories},
  Transform. Groups \textbf{19} (2014), no.~1, 57--103 (English).
  \doi{10.1007/s00031-014-9255-z}

\bibitem[Gur12]{Gur2012Biequivalences}
N.~Gurski, \emph{Biequivalences in tricategories}, Theory Appl. Categ.
  \textbf{26} (2012), no.~14, 349--384.

\bibitem[GJ25]{GJalmorcoh}
N.~Gurski and N.~Johnson, \emph{Universal pseudomorphisms, with applications to
  diagrammatic coherence for braided and symmetric monoidal functors},
  Compositionality \textbf{7} (2025), no.~3, 1--68.
  \doi{10.46298/compositionality-7-3}

\bibitem[GJO19]{GJO2019Topological}
N.~Gurski, N.~Johnson, and A.~M. Osorno, \emph{Topological invariants from
  higher categories}, Notices Amer. Math. Soc. \textbf{66} (2019), no.~8,
  1225--1237. \doi{10.1090/noti1934}

\bibitem[GJO24]{GJOsmbperm}
N.~Gurski, N.~Johnson, and A.~M. Osorno, \emph{The symmetric monoidal
  2-category of permutative categories}, High. Struct. \textbf{8} (2024),
  no.~1, 244--320 (English). \doi{10.21136/HS.2024.06}

\bibitem[HZ]{HZ2023Duality}
S.~Halbig and T.~Zorman, \emph{Duality in monoidal categories}.
  \doi{10.48550/arXiv.2301.03545} \arxiv{2301.03545v3}

\bibitem[HV19]{HV2019Categories}
C.~Heunen and J.~Vicary, \emph{{C}ategories for {Q}uantum {T}heory: {A}n
  {I}ntroduction}, Oxford Graduate Texts in Mathematics, Oxford University
  Press, 2019. \doi{10.1093/oso/9780198739623.001.0001}

\bibitem[Hir03]{Hir03Model}
P.~S. Hirschhorn, \emph{Model categories and their localizations}, Math. Surv.
  Monogr., vol.~99, American Mathematical Society, Providence, RI, 2003.

\bibitem[Hor20]{Hor2020Cohomology}
P.~Horst, \emph{Cohomology of {P}icard {C}ategories}, Ph.D. thesis, Ohio State
  University, 2020, Available at
  \url{https://etd.ohiolink.edu/acprod/odb_etd/etd/r/1501/10?clear=10&p10_accession_num=osu1587396997809887}.

\bibitem[Hov99]{hovey1999mc}
M.~Hovey, \emph{Model {C}ategories}, Math. Surv. Monogr., vol.~63, Providence,
  RI: American Mathematical Society, 1999 (English).

\bibitem[JO12]{JO2012Modeling}
N.~Johnson and A.~M. Osorno, \emph{Modeling stable one-types}, Theory Appl.
  Categ. \textbf{26} (2012), no.~20, 520--537.

\bibitem[JY21]{JY212Dim}
N.~Johnson and D.~Yau, \emph{{2}-{D}imensional {C}ategories}, Oxford University
  Press, New York, 2021. \doi{10.1093/oso/9780198871378.001.0001}
  \arxiv{2002.06055}

\bibitem[JSV96]{JSV1996Traced}
A.~Joyal, R.~Street, and D.~Verity, \emph{Traced monoidal categories}, Math.
  Proc. Camb. Philos. Soc. \textbf{119} (1996), no.~3, 447--468 (English).
  \doi{10.1017/S0305004100074338}

\bibitem[JT91]{JT1991Strong}
A.~Joyal and M.~Tierney, \emph{Strong stacks and classifying spaces}, Category
  theory, {Proc}. {Int}. {Conf}., {Como}/{Italy} 1990, Lecture Notes in
  Mathematics, vol. 1488, Springer Berlin, Heidelberg, 1991, pp.~213--236
  (English). \doi{10.1007/BFb0084222}

\bibitem[Kap21]{Kap2021Supergeometry}
M.~Kapranov, \emph{Supergeometry in mathematics and physics}, New Spaces in
  Physics: Volume 2: Formal and Conceptual Reflections (2021), 114.

\bibitem[Kel72a]{Kelly1972Coherence}
G.~M. Kelly, \emph{An abstract approach to coherence}, Coherence in
  {Categories}, {Lect}. {Notes} {Math}. 281, 106-147 (1972)., 1972.
  \doi{10.1007/bfb0059557}

\bibitem[Kel72b]{Kelly1972Many}
\bysame, \emph{Many-variable functorial calculus. {I}}, Coherence in
  {Categories}, {Lect}. {Notes} {Math}. 281, 66-105 (1972)., 1972.
  \doi{10.1007/bfb0059556}

\bibitem[Kel74a]{Kel1974Coherence}
\bysame, \emph{Coherence theorems for lax algebras and for distributive laws},
  Category {S}eminar ({P}roc. {S}em., {S}ydney, 1972/1973), Lecture Notes in
  Math., Vol. 420, 1974, pp.~281--375.

\bibitem[Kel74b]{Kelly1974Doctrinal}
\bysame, \emph{Doctrinal adjunction}, Category {S}eminar ({P}roc. {S}em.,
  {S}ydney, 1972/1973), Lecture Notes in Mathematics, vol. 420, Springer,
  Berlin, 1974, pp.~257--280 (English). \doi{10.1007/BFb0063105}

\bibitem[Kel89]{Kel89Elementary}
\bysame, \emph{Elementary observations on {$2$}-categorical limits}, Bull.
  Austral. Math. Soc. \textbf{39} (1989), no.~2, 301--317.
  \doi{10.1017/S0004972700002781}

\bibitem[KL80]{KL1980Coherence}
G.~M. Kelly and M.~L. Laplaza, \emph{Coherence for compact closed categories},
  J. Pure Appl. Algebra \textbf{19} (1980), 193--213 (English).
  \doi{10.1016/0022-4049(80)90101-2}

\bibitem[Lac07]{Lack2007Homotopy}
S.~Lack, \emph{Homotopy-theoretic aspects of 2-monads}, J. Homotopy Relat.
  Struct. \textbf{2} (2007), no.~2, 229--260 (English).

\bibitem[Lac02a]{Lac02Codescent}
S.~Lack, \emph{Codescent objects and coherence}, J. Pure Appl. Algebra
  \textbf{175} (2002), no.~1-3, 223--241, Special volume celebrating the 70th
  birthday of Professor Max Kelly. \doi{10.1016/S0022-4049(02)00136-6}

\bibitem[Lac02b]{Lac02Quillen2}
\bysame, \emph{A {Q}uillen model structure for 2-categories}, $K$-Theory
  \textbf{26} (2002), no.~2, 171--205. \doi{10.1023/A:1020305604826}

\bibitem[Lac10]{Lac10Companion}
\bysame, \emph{A 2-categories companion}, Towards higher categories, IMA Vol.
  Math. Appl., vol. 152, Springer, New York, 2010, pp.~105--191.
  \doi{10.1007/978-1-4419-1524-5\_4} \arxiv{math/0702535}

\bibitem[Lap83]{Lap1983Coherence}
M.~L. Laplaza, \emph{Coherence for categories with group structure: {An}
  alternative approach}, J. Algebra \textbf{84} (1983), 305--323 (English).
  \doi{10.1016/0021-8693(83)90081-9}

\bibitem[ML63]{MLan63Natural}
S.~Mac~Lane, \emph{Natural associativity and commutativity}, Rice Univ. Studies
  \textbf{49} (1963), no.~4, 28--46.

\bibitem[ML98]{ML98Categories}
\bysame, \emph{Categories for the working mathematician}, 2nd ed., Grad. Texts
  Math., vol.~5, New York, NY: Springer, 1998 (English).
  \doi{10.1007/978-1-4757-4721-8}

\bibitem[May72]{may72geo}
J.~P. May, \emph{The {G}eometry of {I}terated {L}oop {S}paces}, Lectures Notes
  in Mathematics, vol. 271, Springer-Verlag, Berlin, 1972.

\bibitem[May74]{May1974Einfty}
\bysame, \emph{{$E_{\infty}$} spaces, group completions, and permutative
  categories}, New developments in topology ({P}roc. {S}ympos. {A}lgebraic
  {T}opology, {O}xford, 1972), London Math. Soc. Lecture Note Ser, vol.~11,
  Cambridge Univ. Press, London, 1974, pp.~61--93.
  \doi{10.1017/CBO9780511662607.008}

\bibitem[PS14]{PS2014Traces}
K.~Ponto and M.~Shulman, \emph{Traces in symmetric monoidal categories}, Expo.
  Math. \textbf{32} (2014), no.~3, 248--273 (English).
  \doi{10.1016/j.exmath.2013.12.003}

\bibitem[Rie16]{Rie2017CTC}
E.~Riehl, \emph{Category {T}heory in {C}ontext}, Aurora: Dover Modern Math
  Originals, Dover Publications, 2016.

\bibitem[Ulb84]{Ulb1984Kohaerenz}
K.-H. Ulbrich, \emph{Koh{\"a}renz in {Kategorien} mit {Gruppenstruktur}.
  {III}}, J. Algebra \textbf{88} (1984), 292--316 (German).
  \doi{10.1016/0021-8693(84)90102-9}

\end{thebibliography}

\providecommand{\bysame}{\leavevmode\hbox to3em{\hrulefill}\thinspace}
\providecommand{\MR}{\relax\ifhmode\unskip\space\fi MR }
\providecommand{\MRhref}[2]{%
  \href{http://www.ams.org/mathscinet-getitem?mr=#1}{#2}
}
\providecommand{\doi}[1]{%
  doi:\href{https://dx.doi.org/#1}{\nolinkurl{#1}}}
\providecommand{\arxiv}[1]{%
  arXiv:\href{https://arxiv.org/abs/#1}{#1}}

\end{document}